\newenvironment{customthm}[1]
  {\innercustomthm}
  {\endinnercustomthm}
\newcommand\fnote[1]{\captionsetup{font=small}\caption*{#1}}
\newcommand{\lastcfrac}[2]{%
  \vphantom{\cfrac{#1}{#2}}%
  \raisebox{\dimexpr1ex-\height}{%
    $\displaystyle
      \raisebox{.5\height}{$\ddots$}+\cfrac{#1}{#2}
    $%
  }%
}
\title{Slope of Orderable Dehn Filling of Two-Bridge Knots}
\author{Xinghua Gao}
\address{KIAS, 85 Hoegiro Dongdaemun-gu, Seoul 02455, Republic of Korea \\ xgao29@126.com}
\begin{document}

\maketitle

\begin{abstract}
In this paper, we study the Riley polynomial of double twist knots with higher genus. Using the root of the Riley polynomial, we compute the range of rational slope $r$ such that $r$-filling of the knot complement has left-orderable fundamental group. Further more, we make a conjecture about left-orderable surgery slopes of two-bridge knots.
\end{abstract}

\keywords{left-orderability, two-bridge knot, character variety}

\ccode{Mathematics Subject Classification 2000: 57M25, 57M60, 20F60 }

\section{Introduction}
The study of Riley polynomial of two-bridge knots dates back to \cite{riley}. In a series of papers by Ryoto Hakamata and Masakazu Teragaito \cite{HT1, HT3}, they studied a special class of two-bridge knot, the double twisted knot $J(k,l)$ (see Figure \ref{diagram}). 
Under their convention, the half-twists in the upper box are left-handed (resp. right-handed) if $k > 0$ (resp. $k < 0$), while those in the lower box are right-handed (resp. left-handed) if $l > 0$ (resp. $l < 0$).  By symmetry, $J(k,l)$ is isotopic to $J(-l,-k)$. For example, $J(3,2)$ is the knot $5_2$, while $J(-3,2)$ is the figure-eight knot $4_1$. We will follow this convention in the paper.

\begin{figure}[H]
\center
\includegraphics[width=50mm]{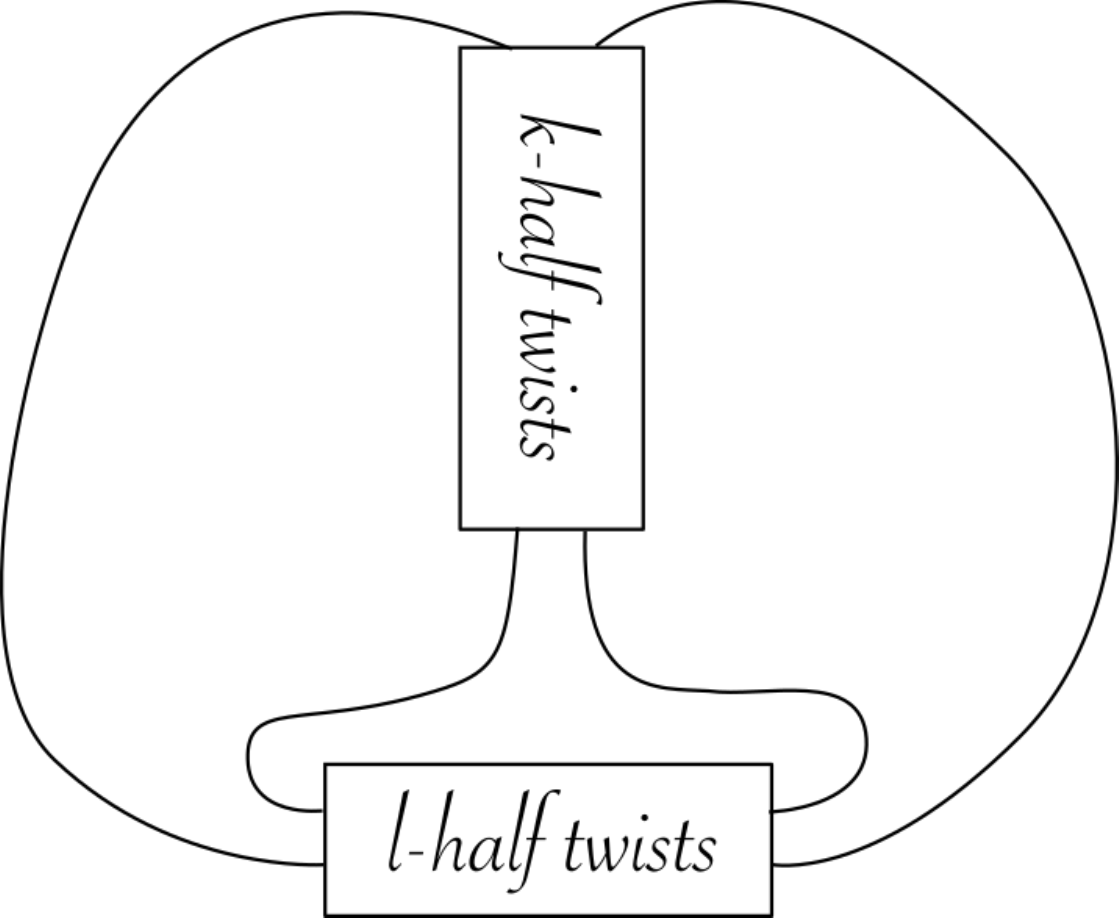}
\quad
\includegraphics[width=55mm]{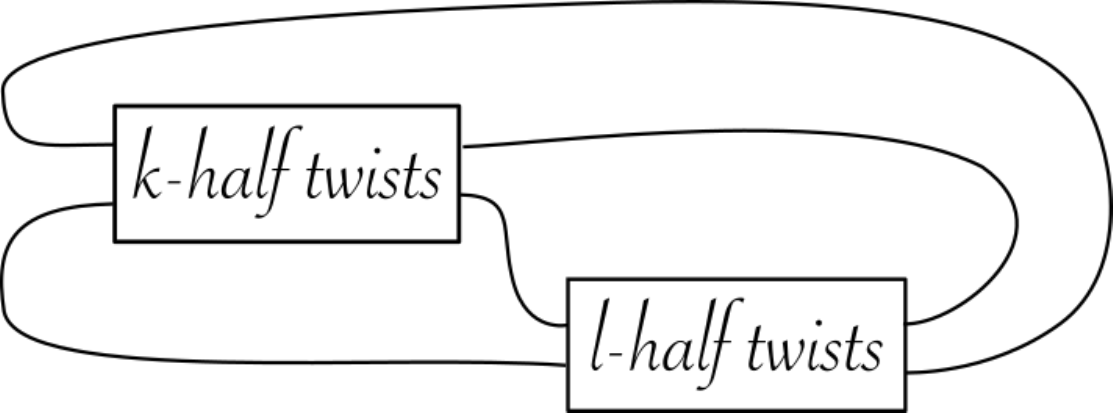}
\caption{The double twist knot $J(k,l)$}
\label{diagram}
\end{figure}

In \cite{HT3}, the special case of the double twisted knot $K=J(k,l)$ with $k=2m$ and $l=2n$ half twists is studied. And a property of the fundamental group of the Dehn filling of $K$ called left-orderability is discussed. A non-trivial group $G$ is called left-orderable if there is a strict total ordering invariant under left multiplication. That is to say, if $f < g$ then $hf < hg$ for any $f, g, h \in G$. We call a 3-manifold orderable if its fundamental group is left-orderable. The main result of \cite{HT3} states:
\begin{theorem}\cite[Theorem 1.1]{HT3}
Let $K = J(2m, 2n)$ be the hyperbolic genus one 2-bridge knot in the 3-sphere $S^3$ as illustrated in Figure 1. Let I be the interval defined by 
\begin{displaymath}
I= 
\begin{cases} 
(-4n, 4m)& \mbox{if } m>0 \text{ and } n>0, \\ 
(4m, -4n)& \mbox{if } m<0 \text{ and } n<0, \\ 
[0, \max\{4m,-4n\})& \mbox{if } m>0 \text{ and } n<0, \\ 
(\min\{4m,-4n\}, 0]& \mbox{if } m<0 \text{ and } n>0. 
\end{cases}
\end{displaymath}
Then any slope in $I$ is left-orderable. That is, $\pi_1(K(r))$ is left-orderable.
\end{theorem}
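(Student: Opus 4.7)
The plan is to follow the standard route for producing left-orderable fillings from arcs of non-abelian $PSL_2(\mathbb{R})$ representations. The steps are: (i) parameterize non-abelian representations of $\pi_1(S^3\setminus K)$ by roots of the Riley polynomial; (ii) produce a continuous real arc of such roots whose representations land in $PSL_2(\mathbb{R})$; (iii) compute, along this arc, the slope detected by the peripheral subgroup; (iv) lift the representations to $\widetilde{PSL_2(\mathbb{R})}$; and (v) invoke the Boyer--Rolfsen--Wiest criterion to pass left-orderability from the image back to $\pi_1(K(r))$.

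Starting from the two-bridge presentation $\langle a, b \mid wa=bw\rangle$ of $K(m,n)$ and Riley's normal form for $\rho(a),\rho(b)$, non-abelian representations correspond exactly to solutions of the Riley polynomial $\Phi_{K(m,n)}(M,u)=0$, and for $M>0$ the image lies in $PSL_2(\mathbb{R})$. The Riley polynomial of $J(2m,2n)$ admits a Chebyshev-type product decomposition, so I would parameterize candidate real solutions by an angle $s$, rewrite $\Phi_{K(m,n)}$ in trigonometric form, and apply the intermediate value theorem to extract a continuous real arc $s\mapsto(M(s),u(s))$. This arc is anchored at the abelian point $M=1$ on one end and degenerates to a parabolic representation as $M\to 0$ or $M\to\infty$ on the other end, corresponding to the two twist regions.

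Next, expressing the longitude $\lambda$ explicitly as a word in $a,b$, I would compute the rotation numbers of $\rho(\mu)$ and $\rho(\lambda)$ after lifting to $\widetilde{PSL_2(\mathbb{R})}$; their ratio gives a continuous slope function $r(s)$. A direct evaluation at the endpoints of the arc shows that $r(s)$ tends to $0$ at the abelian end and to $4m$ or $-4n$ at the parabolic end, so the intermediate value theorem delivers every rational slope in the open interval between these two extremes; the four sign cases of $I$ then follow by bookkeeping on the signs of $m,n$ and on which endpoint is open versus closed. The lift to $\widetilde{PSL_2(\mathbb{R})}$ exists because the relevant obstruction class on the knot complement vanishes, and the lift can be arranged so that the $r$-filling kills the image of the peripheral torus. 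Combined with irreducibility of $K(r)$ (automatic for all but finitely many fillings of a hyperbolic knot) and the faithful action of $\widetilde{PSL_2(\mathbb{R})}$ on $\mathbb{R}$, Boyer--Rolfsen--Wiest completes the argument.

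The main obstacle I expect is step (ii): explicitly constructing the continuous real arc of Riley roots and controlling its endpoints sharply enough to identify the precise interval $I$ stated in the theorem. All the genuine work lies in analyzing the Chebyshev recursion satisfied by $\Phi_{K(m,n)}$ and verifying that the trigonometric substitution extends continuously across the full range of $s$; once this is in place, the computation of $r(s)$ and the lifting argument proceed along a by-now standard template.
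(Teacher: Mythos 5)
Your overall architecture --- real roots of the Riley polynomial $\rightarrow$ a continuous arc of $\mathrm{PSL}_2(\mathbb{R})$ representations $\rightarrow$ a slope function on the peripheral torus $\rightarrow$ a lift to $\widetilde{\mathrm{PSL}_2\mathbb{R}}$ $\rightarrow$ Boyer--Rolfsen--Wiest --- is exactly the strategy of Hakamata--Teragaito and of the present paper (which runs the same machine for $J(2m+1,2n)$ via the translation and holonomy extension loci). But as written the proposal defers essentially all of the content, and the deferred steps are not ``standard template'' steps; moreover the description of the arc's endpoints is wrong. The end where $M\to 0$ or $M\to\infty$ is not a parabolic representation: the parabolic locus is $M=1$ (equivalently $t=1$), and there the detected slope diverges to $\pm\infty$, which is why unbounded intervals such as $(-\infty,1)$ appear for $J(2m+1,2n)$ but cannot produce the bounded endpoints $4m$ and $-4n$ here. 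Those finite limiting slopes arise at \emph{ideal} degenerations of the character variety and are boundary slopes of incompressible surfaces; extracting them is the technical heart of the proof and requires an explicit computation of the longitude's holonomy (the quantity $B_s$ in this paper, built from $\mathcal{L}=w_*^nw^nx^{-4n}$ and the closed forms of $f_m,g_m$) followed by a limit computation as the Riley root degenerates. No formal intermediate value argument on a Chebyshev recursion produces the numbers $4m$ and $-4n$ without that computation.

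Second, the four sign cases do not ``follow by bookkeeping.'' The half-open intervals $[0,\max\{4m,-4n\})$ and $(\min\{4m,-4n\},0]$ in the mixed-sign cases occur because one of the two candidate families of real Riley roots fails to exist, or fails to yield representations conjugate into $\mathrm{SL}_2(\mathbb{R})$, for those signs; deciding which family survives requires the sign analysis of $f_m$, $g_m$ and of $T=t+1/t$ relative to $2$ (the analogue of Proposition 4.4 here). Relatedly, the blanket claim that ``for $M>0$ the image lies in $\mathrm{PSL}_2(\mathbb{R})$'' conflates the boundary-elliptic arcs ($|t|=1$, handled by the translation extension locus) with the boundary-hyperbolic arcs ($t>0$ real, handled by the holonomy extension locus); the reality condition is different in the two regimes and is precisely what dictates which pieces of $I$ each arc contributes. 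Finally, passing from $\mathrm{trans}(\widetilde{\rho}(\mu^p\lambda^q))=0$ to left-orderability of $\pi_1(K(r))$ needs the non-parabolic, non-ideal, nonzero hypothesis of the Culler--Dunfield criterion, not merely vanishing of the Euler class --- in particular the slope $r=0$ must be handled separately via $b_1(K(0))\geq 1$ --- so ``arranging the lift so that the filling curve dies'' is not automatic at the endpoints of the arc.
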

As Tran informed the author, the above result is also proved independently by him in \cite{Tran_surg} using a different method. And in Tran's recent paper \cite{tran19}, he improved the above result by showing slopes in $(-\infty, 0)$ also give rise to orderable Dehn fillings.

While in the papers \cite{HT3}, the authors mostly focused on the double twist knot with $2m$ and $2n$ half twists, which has genus $1$. In this paper, I computed the Riley polynomial of the double twist knot with $2m+1$ and $2n$ half twists $J(2m+1,2n)$, which has higher genus. By studying the root of the Riley polynomial, I proved the following theorem of double twist knots.

\begin{theorem}\label{main}
For rational $r\in I$, the Dehn filling of $J(2m+1,2n)$ of slope $r$ is orderable, where
\begin{displaymath}
I= 
\begin{cases} 
(-\infty, 1)& \mbox{if } m> 0 \text{ and } n>0, \text{ or } m<-1 \text{ and } n>1,\\
(-1, \infty)& \mbox{if } m< -1 \text{ and } n<0,\text{ or } m>0 \text{ and } n<-1,\\
[0, 4n)& \mbox{if } m<-1 \text{ and } n>0, \\ 
(4n, 0]& \mbox{if } m>0 \text{ and } n<0.
\end{cases}
\end{displaymath}
\end{theorem}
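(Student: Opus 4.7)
My proof plan follows the by-now-standard strategy, used by Khoi, Hakamata--Teragaito and Tran in the genus-one case, of producing representations into $\widetilde{PSL_2(\mathbb{R})}$ via the Riley polynomial. The first step is computational: write the two-bridge presentation of $\pi_1(S^3\setminus J(2m+1,2n))$ as $\langle a, b \mid w a = b w \rangle$, take the Riley parametrization
\[
\rho(a)=\begin{pmatrix} M & 1 \\ 0 & M^{-1}\end{pmatrix},\qquad
\rho(b)=\begin{pmatrix} M & 0 \\ -y & M^{-1}\end{pmatrix},
\]
and derive a closed form (or a Chebyshev-type two-variable recursion) for the Riley polynomial $\phi_{m,n}(y,M)$ of $J(2m+1,2n)$. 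Because the genus is now higher than in \cite{HT3}, the word $w$ is longer and $\phi_{m,n}$ has higher $y$-degree, so I expect the formula to involve a product/sum of Chebyshev polynomials analogous to those appearing in \cite{HT3} but one step deeper in the continued-fraction recursion.

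Next I would produce a continuous real-analytic arc of nonabelian $SL_2(\mathbb{R})$ representations. Setting $M\in\mathbb{R}_{>0}$ (so $\mathrm{tr}\,\rho(a)=M+M^{-1}\ge 2$) and looking for real roots $y(M)$ of $\phi_{m,n}(y,M)=0$, I would isolate a branch starting at a distinguished point (e.g.\ the value of $M$ where the representation becomes reducible, or where $y\to 0$, which corresponds to the parabolic representation at the meridional slope). Along this branch the representation lands in $SL_2(\mathbb{R})$, projects to $PSL_2(\mathbb{R})$, and admits a canonical lift $\tilde\rho_M$ to $\widetilde{PSL_2(\mathbb{R})}$; the translation numbers $\tau(\tilde\rho_M(\mu))$ and $\tau(\tilde\rho_M(\lambda))$ depend continuously on $M$. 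The slope that is killed by $\tilde\rho_M$ is then
\[
r(M)=-\frac{\tau(\tilde\rho_M(\lambda))}{\tau(\tilde\rho_M(\mu))},
\]
and every $r(M)$ is left-orderable by the standard lifting/Culler--Dunfield criterion: a nonabelian $\widetilde{PSL_2(\mathbb{R})}$-representation of $\pi_1(K(r))$ guarantees left-orderability of the filling group.

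With these ingredients, the theorem reduces to showing that, for each of the four sign regimes for $(m,n)$, the map $M\mapsto r(M)$ continuously sweeps out the claimed interval $I$. I would analyze the two endpoints of the $M$-interval separately: at the "reducible" endpoint the representation degenerates to an abelian one and $r(M)$ limits to a definite rational value (the expected boundary slope $0$, $\pm 1$, or $4n$ in the four cases), while at the other endpoint one of $\tau(\tilde\rho_M(\mu))$ or $\tau(\tilde\rho_M(\lambda))$ vanishes so that $r(M)\to\pm\infty$. Continuity and the intermediate value theorem then yield the full interval. The signs of the limits and the direction of the sweep are what distinguish the four cases in the statement of Theorem~\ref{main}.

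The main obstacle is the analysis of the real-root branch of the higher-degree Riley polynomial $\phi_{m,n}(y,M)$: I need to verify that a global real-analytic arc of solutions exists over the full $M$-interval in question, without running into multiple roots, complex collisions, or a switch to the abelian locus. Secondarily, computing the two translation numbers along this arc---especially deriving the asymptotic behavior of $\tau(\tilde\rho_M(\mu))$ and $\tau(\tilde\rho_M(\lambda))$ at the endpoints and showing that $r(M)$ is monotone (or at least surjective onto $I$)---is where the detailed work lies. The extra complexity compared with \cite{HT3} comes precisely from the odd number of half-twists $2m+1$, which changes the parity of the Chebyshev recursion and is responsible for the asymmetric, sometimes unbounded intervals appearing in the statement.
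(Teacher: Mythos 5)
Your overall strategy (Riley polynomial, a real branch of nonabelian representations, lifts to $\widetilde{G}=\widetilde{\text{PSL}_2\mathbb{R}}$, translation numbers, the Culler--Dunfield criterion) is the one the paper follows, but as written your plan has two concrete gaps. First, the single formula $r(M)=-\tau(\tilde\rho_M(\lambda))/\tau(\tilde\rho_M(\mu))$ only makes sense when the boundary holonomy is elliptic. In the regimes $m<-1,\ n>0$ and $m>0,\ n<0$ (the intervals $[0,4n)$ and $(4n,0]$), the relevant branch of the Riley polynomial has $t$ real with $t>1$, so the restriction to $\partial M$ is \emph{hyperbolic} and both translation numbers vanish (the paper must in fact prove $\mathrm{trans}(\widetilde{\rho_s}(\mathcal{L}))=0$ by continuity from the reducible endpoint); your ratio is then $0/0$. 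The paper replaces it there by the ratio of logarithms of boundary eigenvalues, $-\mathrm{Log}(B_s)/\log(A_s)$, and invokes the holonomy extension locus of \cite{gao2} rather than the translation extension locus; some substitute for this is unavoidable. Second, in the cases giving $(-\infty,1)$ the direct sweep of your $r(M)$ only produces $(-\infty,0]$: the limits of the slope function at the two ends of the branch are $-\infty$ (where $t\to 1$ and the meridian becomes parabolic) and $0$ (where $s\to 0$ and $t$ tends to a unit root of the Alexander polynomial). The right-hand endpoint $1$ is not a limit of $r(M)$ at all; it comes from the $D_\infty$-invariance (integer translation in the meridian direction) of the extension locus, which produces a second arc from $(1-\frac{1}{2\pi}\ln\xi,\,0)$ to $(1,\,-(2d-1))$ and hence the additional slopes $[0,2d-1)\supseteq[0,1)$. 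Your guess that the finite endpoint of the sweep is ``the expected boundary slope $0$, $\pm 1$, or $4n$'' conflates these two mechanisms.

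Beyond that, the analytic content you defer is exactly where the paper spends its effort: explicit Chebyshev-type bounds $-C_2/f_m^2<T-s-2<-C_1/f_m^2$ (resp.\ $C_3/f_m<T-s-2<C_4/f_m$) locating the real branch, a sign analysis of the roots of $f_m$ and $g_m$ showing the branch persists over the whole parameter interval with $T<2$ (resp.\ $T>2$), and a closed formula $B_s=\frac{g_m-tg_{m-1}}{g_{m-1}-tg_m}t^{-2n}$ for the longitude entry from which the three endpoint limits are computed. So the skeleton is right, but the plan as stated would not yield the stated intervals without the two corrections above.
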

\begin{remark}
When $m=-1, 0$ or $n=0$, $J(-1, 2n)$, $J(1, 2n)$ and $J(2m+1, 0)$ become torus knots. So these three cases are excluded. All two-bridge links except the torus links are hyperbolic (See for example \cite[Corollary 2]{menasco}). Therefore as a subset of two-bridge links, all the double twist knots in our theorem are hyperbolic.
\end{remark}

\begin{remark}
After the author posted this paper on arxiv, Tran informed the author that in version 3 of his paper \cite{tran19}, he independently computed the interval in the first two cases of Theorem \ref{main}. 
\end{remark}

Section 2-7 of this paper are devoted to proving the theorem. In Section \ref{examples}, we use computer program to check the theorem for some examples. In Section \ref{conj}, we make connections with general two-bridge knots.

\section{SL$_2(\mathbb{R})$ Representations of the Fundamental Group of $J(k,2n)$}

We will mostly follow the notations in \cite{HT3}.

Let us look at the link diagram of $J(k, l)$ in Figure \ref{diagram}. 
The link diagram $J(k, l)$ is a knot (called the double twist knot) if and only if either $k$ or $l$ is even. Then one may apply symmetries of the knot to assume that $l = 2n$ is even.  
Moreover, since $J(k,2n)$ and $J(-k,-2n)$ differ by mirror, then in our computation, we can assume $n>0$ without loss of generality. 

The fundamental group of $J(k, 2n)$ has the following presentation $\pi_1(S^3-J(k,2n))=\left<x, y | w^nx=yw^n \right>$, where
\begin{equation}
w= 
\begin{cases} 
(xy^{-1})^m(x^{-1}y)^m, & \mbox{if } k = 2m, \\ 
(xy^{-1})^mxy(x^{-1}y)^m, & \mbox{if } k = 2m+ 1. 
\end{cases}
\end{equation}

In the first case $J(2m,2n)$ corresponds to $K(m,n)$ as in \cite{HT3}. We are interested in the second case $J(2m+1,2n)$, double twist knot with $2m+1$ half twists and $2n$ half twists, which has genus $n$ when $m \neq -1$ (see e.g. \cite[Corollary 8.7.5]{cromwell}). Under the above presentation of the fundamental group, the meridian $\mathcal{M}=x$ and the longitude $\mathcal{L}=w_*^nw^nx^{-4n}$, where $w_*=(yx^{-1})^myx(y^{-1}x)^m$ is obtained from $w$ by reversing the order of letters. See Section 4 of \cite{hoste} for more details about the presentation of $\pi_1(S^3-J(k,2n))$ and $\mathcal{L}$. But remark that there is a slight difference in the notation and choice of framing.

In this paper, we will study the case when $k=2m+1$ in $J(k, l)$. 

Let $\rho$ be a SL$_2(\mathbb{R})$ representation of $\pi_1(S^3-J(2m+1,2n))$. Then by taking conjugation, we can assume that
\begin{equation}
\rho(x)=
\begin{pmatrix} 
\sqrt{t} & 1/\sqrt{t} \\
0 & 1/\sqrt{t} 
\end{pmatrix}
,\quad
\rho(y)=
\begin{pmatrix} 
\sqrt{t} & 0 \\
-s\sqrt{t} & 1/\sqrt{t} 
\end{pmatrix},
\end{equation}
where $s$ and $t$ are real numbers. 
To compute the image of $\mathcal{M}$ and $\mathcal{L}$ under $\rho$, we will need some computational trick.
Let $f_m$ and $g_m$ be two sequences of polynomials in the variable $s$ defined by inductive relations $f_{m+2}-(s+2)f_{m+1}+f_m=0$ and $g_{m+2}-(s+2)g_{m+1}+g_m=0$, with initial conditions $f_0=g_0=1$, $f_1=s+1$ and $g_1=s+2$. 
Set $f_{-m}=f_{m-1}$ for $m\geq 1$. Set $g_{-1}=0$ and $g_{-m}=-g_{m-2}$ for $m\geq 2$. So both $f_m$ and $g_m$ are defined for any $m \in \mathbb{Z}$.

\begin{lemma}\cite[Lemma 2.2]{HT3}
The closed formulas for $f_m$ and $g_m$ are
\begin{displaymath}
f_m=\sum_{i=0}^m\binom{m+i}{m-i}s^i, \quad g_m=\sum_{i=0}^m\binom{m+1+i}{m-i}s^i.
\end{displaymath}
In particular, all coefficients of $f_m$ and $g_m$ are positive integers, and the degree of both $f_m$ and $g_m$ are $m$. Also, $f_m$ and $g_m$ are monic.
\end{lemma}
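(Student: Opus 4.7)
My plan is to prove the closed formulas by direct induction on $m \geq 0$, using the second-order recurrences. For the base cases $m=0$ and $m=1$ one just evaluates the claimed sums: $f_0 = \binom{0}{0} = 1$ and $f_1 = \binom{1}{1} + \binom{2}{0} s = 1+s$, while $g_0 = \binom{1}{0} = 1$ and $g_1 = \binom{2}{1} + \binom{3}{0} s = 2+s$, matching the stated initial conditions. (The formulas are asserted for $m \geq 0$, since the summation index range only makes sense there; the extension to negative $m$ is by definition and needs no proof.)

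For the inductive step, assume the closed formulas hold for $m$ and $m+1$, and compute the coefficient of $s^i$ in $(s+2)f_{m+1} - f_m$. The term $s \cdot f_{m+1}$ contributes $\binom{m+i}{m+2-i}$, the term $2f_{m+1}$ contributes $2\binom{m+1+i}{m+1-i}$, and $-f_m$ contributes $-\binom{m+i}{m-i}$. Applying Pascal's identity to $\binom{m+1+i}{m+1-i} = \binom{m+i}{m+1-i} + \binom{m+i}{m-i}$ and then iterating it twice on $\binom{m+2+i}{m+2-i}$ yields
\[
\binom{m+2+i}{m+2-i} \;=\; \binom{m+i}{m+2-i} + 2\binom{m+i}{m+1-i} + \binom{m+i}{m-i},
\]
and a one-line substitution shows the three contributions above collapse to exactly this expression. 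The argument for $g_m$ is verbatim the same with each binomial's upper index shifted by $1$, and (since the recurrence and initial conditions are of the same form) I expect to package both calculations into a single inductive step indexed by an auxiliary parameter.

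The remaining assertions are then immediate consequences of the closed formula. Each summand is a positive binomial coefficient, so all coefficients of $f_m$ and $g_m$ are positive integers. The leading term (at $i=m$) is $\binom{2m}{0}s^m = s^m$ for $f_m$ and $\binom{2m+1}{0}s^m = s^m$ for $g_m$, so both polynomials are monic of degree exactly $m$.

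Honestly, there is no real obstacle; the whole argument is bookkeeping with Pascal's identity. The one place to be careful is the index shift when passing from $f_{m+1}$ to $s \cdot f_{m+1}$: the coefficient of $s^i$ in $s\cdot f_{m+1}$ is the coefficient of $s^{i-1}$ in $f_{m+1}$, which is $\binom{m+i}{m+2-i}$ rather than $\binom{m+1+i}{m+1-i}$. Laying out the three contributions explicitly before invoking Pascal makes this transparent.
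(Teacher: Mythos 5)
Your induction is correct: the base cases evaluate as claimed, the three contributions to the coefficient of $s^i$ in $(s+2)f_{m+1}-f_m$ are identified correctly (including the index shift in $s\cdot f_{m+1}$), the triple application of Pascal's identity collapses them to $\binom{m+2+i}{m+2-i}$, and the boundary indices $i=0$ and $i=m+1,m+2$ are covered by the usual convention $\binom{n}{k}=0$ for $k<0$ or $k>n$, which is worth stating explicitly. The paper gives no proof of its own---it quotes the lemma verbatim from Hakamata--Teragaito---so there is nothing to compare against; your argument is the standard one.
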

Then it is easy to check that $f_m$ and $g_m$ satisfy the following properties. 
\begin{lemma}\cite[Lemma 2.3]{HT3}\label{relations}
\begin{itemize}
\item[(1)] $f_m+g_{m-1}=g_m$\\
\item[(2)] $f_m+sg_{m}=f_{m+1}$\\
\item[(3)] $f_m^2=sg_mg_{m-1}+1$\\
\end{itemize}
\end{lemma}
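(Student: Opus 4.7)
The plan is to prove all three identities by induction on $m$, leveraging the common linear recurrence $f_{m+2}-(s+2)f_{m+1}+f_m=0$ and the identical one for $g_m$. The base cases follow directly from $f_0=g_0=1$, $f_1=s+1$, $g_1=s+2$, together with the convention $g_{-1}=0$: for (1) at $m=1$ one has $f_1+g_0=(s+1)+1=s+2=g_1$; for (2) at $m=0$ one has $f_0+sg_0=1+s=f_1$; and for (3) at $m=0$ one has $f_0^2=1=s\cdot 1\cdot 0+1=sg_0g_{-1}+1$.

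For (1) and (2), the inductive step is essentially free from the linearity of the shared recurrence. Suppose (1) holds at indices $m-1$ and $m$. Then
\begin{align*}
f_{m+1}+g_m &= \bigl[(s+2)f_m-f_{m-1}\bigr]+\bigl[(s+2)g_{m-1}-g_{m-2}\bigr] \\
&= (s+2)(f_m+g_{m-1})-(f_{m-1}+g_{m-2}) = (s+2)g_m-g_{m-1} = g_{m+1}.
\end{align*}
The argument for (2) is the same, combining the recurrences with the two previous instances of the identity. In both cases one then extends to negative $m$ using the definitions $f_{-m}=f_{m-1}$, $g_{-1}=0$, $g_{-m}=-g_{m-2}$; these cases amount to a finite check at the boundary and then the same two-step recurrence argument running backwards.

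The main content is in (3), which is quadratic rather than linear, so pure recurrence manipulation does not immediately produce it. The idea is to use (1) and (2) to rewrite $f_{m+1}$ and $g_{m+1}$ in terms of $f_m$ and $g_m$, namely
\[
f_{m+1}=f_m+sg_m, \qquad g_{m+1}=g_m+f_{m+1}=f_m+(s+1)g_m,
\]
and then compute
\[
f_{m+1}^2-sg_{m+1}g_m = (f_m+sg_m)^2 - s\bigl(f_m+(s+1)g_m\bigr)g_m.
\]
Expanding and cancelling the $s^2g_m^2$ terms collapses the right-hand side to $f_m^2+sg_m(f_m-g_m)$, and a second application of (1) in the form $f_m-g_m=-g_{m-1}$ turns this into $f_m^2-sg_mg_{m-1}$, which equals $1$ by the inductive hypothesis. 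Hence $f_{m+1}^2=sg_{m+1}g_m+1$, closing the induction. The only subtlety I anticipate is bookkeeping around the extended definitions for negative $m$ (especially $g_{-1}=0$), but since the recurrence is symmetric under reversal, the same inductive skeleton propagates in both directions from the base cases.
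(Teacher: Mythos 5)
Your proof is correct. Note that the paper itself offers no proof of this lemma: it simply cites \cite[Lemma 2.3]{HT3} and remarks that the identities are ``easy to check,'' so there is no in-paper argument to compare against. Your route --- two-step induction on the shared linear recurrence for (1) and (2), and then a Cassini-type reduction for the quadratic identity (3), where $f_{m+1}^2 - s g_{m+1}g_m$ is rewritten via $f_{m+1}=f_m+sg_m$ and $g_{m+1}=f_m+(s+1)g_m$ and collapses to $f_m^2 - s g_m g_{m-1}$ after using $f_m-g_m=-g_{m-1}$ --- is exactly the right mechanism, and the algebra checks out. An alternative the paper implicitly makes available is to verify (1) and (2) directly from the closed binomial formulas of Lemma 2.2 (they reduce to Pascal-type identities), though (3) is still most naturally done by your inductive argument. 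Two small bookkeeping points: since (1) and (2) are propagated by a two-term recurrence, each needs \emph{two} consecutive base cases (e.g.\ for (1) also check $m=0$, which reads $f_0+g_{-1}=1+0=g_0$ and uses the convention $g_{-1}=0$); and for the backward propagation you should note explicitly that the negative-index conventions $f_{-m}=f_{m-1}$, $g_{-1}=0$, $g_{-m}=-g_{m-2}$ are precisely what one gets by running the recurrence $h_m=(s+2)h_{m+1}-h_{m+2}$ backwards from the initial data, so the same induction applies verbatim for $m<0$. Both are one-line checks and do not affect the validity of the argument.
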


Now we can compute the image of $w$ under $\rho$.
\begin{proposition}\label{W}
Let $W=\rho(w)$, then
\begin{equation}
W=
\begin{pmatrix} 
tf_m^2 - sg_m^2 & \frac{f_m g_m}{t}-f_m g_{m-1} \\
stf_m g_{m-1}-sf_m g_m & \frac{f_m^2}{t} -sg_{m-1}^2
\end{pmatrix}.
\end{equation}
\end{proposition}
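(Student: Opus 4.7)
The plan is to decompose the word $w=(xy^{-1})^m xy (x^{-1}y)^m$ into three factors, express the image of each factor in closed form via the recursion satisfied by $f_m,g_m$, and then multiply out using the identities in Lemma \ref{relations}.

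First I would compute the two elementary matrices $\rho(xy^{-1})$ and $\rho(x^{-1}y)$ directly from the definition of $\rho$. A quick check shows both have determinant $1$ and trace $s+2$, which is precisely the characteristic equation underlying the recursion $h_{m+2}-(s+2)h_{m+1}+h_m=0$ that defines $f_m$ and $g_m$. By Cayley--Hamilton, every entry of $\rho((xy^{-1})^m)$ and $\rho((x^{-1}y)^m)$ satisfies this three-term recursion, so each entry is determined by its values at $m=0$ (giving $I$) and $m=1$. Matching initial conditions, one finds
\begin{equation*}
\rho((xy^{-1})^m)=\begin{pmatrix} f_m & g_{m-1} \\ sg_{m-1} & f_m-sg_{m-1} \end{pmatrix},\qquad \rho((x^{-1}y)^m)=\begin{pmatrix} f_m & -g_{m-1}/t \\ -stg_{m-1} & f_m-sg_{m-1} \end{pmatrix}.
\end{equation*}
As a sanity check, the determinant of either matrix equals $f_m(f_m-sg_{m-1})-sg_{m-1}^2\cdot\text{(something)}$, which reduces to $f_m^2-sg_mg_{m-1}=1$ by Lemma \ref{relations}(3), confirming the formulas.

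Next I would compute $\rho(xy)$ directly; a short calculation gives $\rho(xy)=\left(\begin{smallmatrix} t-s & 1/t \\ -s & 1/t \end{smallmatrix}\right)$. Then I would multiply $\rho((xy^{-1})^m)\cdot \rho(xy)$ first, simplifying the resulting $(1,1)$ entry $f_m t - s(f_m+g_{m-1})$ to $f_m t - sg_m$ using Lemma \ref{relations}(1), and analogously compressing the other three entries. Finally I would multiply on the right by $\rho((x^{-1}y)^m)$ and again apply $f_m+g_{m-1}=g_m$ wherever possible. The claimed formula for $W$ then falls out.

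The only real obstacle is bookkeeping: four matrix entries, each a sum of four products of the $f$'s and $g$'s, must be shepherded into the compact form in Proposition \ref{W}. The work is tamed by the observation that every simplification needed in the end is a single application of $f_m+g_{m-1}=g_m$; the deeper identity $f_m^2=sg_mg_{m-1}+1$ is only needed for the internal determinant consistency check rather than for the final formula itself. No further input beyond Lemma \ref{relations} and the Cayley--Hamilton trick is required.
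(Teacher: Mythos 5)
Your proposal is correct and follows essentially the same route as the paper: the same decomposition $w=(xy^{-1})^m\,xy\,(x^{-1}y)^m$, the same elementary matrices, the same closed forms for the $m$-th powers (your $(2,2)$ entry $f_m-sg_{m-1}$ equals the paper's $f_{m-1}$ by Lemma \ref{relations}(2)), and the same final multiplication simplified via $f_m+g_{m-1}=g_m$. The Cayley--Hamilton argument pinning down the powers from trace $s+2$, determinant $1$, and initial conditions is a clean justification of a step the paper asserts without comment.
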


\begin{proof}
We compute directly, 
\begin{equation*}
\rho(xy^{-1})=
\begin{pmatrix} 
s+1 & 1 \\
s & 1 
\end{pmatrix},
\end{equation*}

\begin{equation*}
\rho(x^{-1}y)=
\begin{pmatrix} 
s+1 & -1/t \\
-st & 1 
\end{pmatrix},
\end{equation*}

\begin{equation*}
\rho(xy)=
\begin{pmatrix} 
t-s & 1/t \\
-s & 1/t 
\end{pmatrix}.
\end{equation*}
Then
\begin{equation*}
\rho(xy^{-1})^m=
\begin{pmatrix} 
f_m & g_{m-1} \\
f_m-f_{m-1} & g_{m-1}-g_{m-2} 
\end{pmatrix}
=
\begin{pmatrix} 
f_m & g_{m-1} \\
sg_{m-1} & f_{m-1}
\end{pmatrix},
\end{equation*}

\begin{equation*}
\rho(x^{-1}y)^m=
\begin{pmatrix} 
f_m & -\frac{f_{m+1}-(s+1)f_m}{st} \\
-stg_{m -1}& g_{m}-(s+1)g_{m-1} 
\end{pmatrix}
=
\begin{pmatrix} 
f_m & -\frac{g_{m-1}}{t} \\
-stg_{m-1} & f_{m-1} 
\end{pmatrix}.
\end{equation*}
So 
\begin{displaymath}
W=\rho(w)=\rho(xy^{-1})^m\rho(xy)\rho(x^{-1}y)^m=
\begin{pmatrix} 
tf_m^2-sg_m^2 & \frac{f_m g_m}{t}-f_m g_{m-1} \\
stf_m g_{m-1}-sf_m g_m & \frac{f_m^2}{t}-sg_{m-1}^2
\end{pmatrix}.
\end{displaymath}
\end{proof}
Denote
\begin{displaymath}
W=
\begin{pmatrix} 
w_{1,1} & w_{1,2} \\
w_{2,1} & w_{2,2} 
\end{pmatrix}.
\end{displaymath}

Let $z$ and $1/z$ be the eigenvalues of $W$. Set $\tau=\text{tr} W=z+1/z$ and $\tau_k=z^{k-1}+z^{k-3}+\cdots +z^{3-k}+z^{1-k}$. Then $\tau_k$ satisfies $\tau_{k+2}-\tau\tau_{k+1}+\tau_k=0$.
\begin{lemma}\cite[Lemma 3.5]{HT3}
For $W^n=\begin{pmatrix} z_{1,1} & z_{1,2} \\z_{2,1} & z_{2,2} \end{pmatrix}=\rho(w^n)$, 
we have 
\begin{displaymath}
W^n=
\begin{pmatrix} 
w_{1,1}\tau_n-\tau_{n-1} &  w_{1,2}\tau_n\\
w_{2,1}\tau_n & \tau_{n+1}-w_{1,1}\tau_n 
\end{pmatrix}.
\end{displaymath}
\end{lemma}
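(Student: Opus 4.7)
The plan is to reduce everything to the Cayley--Hamilton identity. Since $W=\rho(w)$ lies in $\mathrm{SL}_2(\mathbb{R})$, it has $\det W = 1$ and $\mathrm{tr}\,W = \tau$, so Cayley--Hamilton gives $W^2 = \tau W - I$. Combined with the recursion $\tau_{k+2}=\tau\tau_{k+1}-\tau_k$ that defines the sequence $\tau_k$, this strongly suggests that every power $W^n$ lives in the two-dimensional subalgebra $\mathbb{R}\langle I,W\rangle\subset M_2(\mathbb{R})$, with coefficients given by consecutive $\tau_k$'s.

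Concretely, I would prove by induction on $n\geq 1$ the compact identity
$$W^n = \tau_n W - \tau_{n-1} I.$$
The base case $n=1$ reduces to $\tau_1=1$ and $\tau_0=0$, the latter being the natural extension of the definition of $\tau_k$ obtained by running the recursion backwards one step. For the inductive step, write $W^{n+1}=W\cdot W^n$, substitute the inductive hypothesis, replace $W^2$ by $\tau W - I$, and collect terms: the coefficient of $W$ becomes $\tau\tau_n - \tau_{n-1}$, which equals $\tau_{n+1}$ by the defining recursion, and the coefficient of $I$ becomes $-\tau_n$. This is exactly the statement for $n+1$.

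Once the scalar identity is in hand, three of the four entries of the claimed expression for $W^n$ can simply be read off: the $(1,1)$, $(1,2)$, and $(2,1)$ entries of $\tau_nW-\tau_{n-1}I$ are respectively $w_{1,1}\tau_n-\tau_{n-1}$, $w_{1,2}\tau_n$, and $w_{2,1}\tau_n$, matching the lemma on the nose. The $(2,2)$ entry first appears in the form $w_{2,2}\tau_n-\tau_{n-1}$; to recast it as $\tau_{n+1}-w_{1,1}\tau_n$ as in the statement, use $w_{1,1}+w_{2,2}=\tau$ together with $\tau_{n+1}=\tau\tau_n-\tau_{n-1}$ one more time.

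There is no real obstacle here: the whole lemma is essentially Cayley--Hamilton packaged in the authors' preferred bookkeeping. The only care required is to pin down conventions for $\tau_0$ (and implicitly $\tau_{-1}=-1$) so that the base case actually matches $W$, and to make sure the rewriting of the $(2,2)$ entry is stated cleanly enough to be reused later when computing the image of the longitude $\mathcal{L}=w_*^n w^n x^{-4n}$.
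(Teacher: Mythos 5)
Your proof is correct, and it is essentially the argument the paper relies on: the paper simply cites \cite[Lemma 3.5]{HT3} (noting the proof never uses the explicit entries of $W$), and that proof is the same induction on $n$ via the Cayley--Hamilton identity $W^2=\tau W-I$ and the recursion $\tau_{k+2}=\tau\tau_{k+1}-\tau_k$, yielding $W^n=\tau_n W-\tau_{n-1}I$ and the rewriting of the $(2,2)$ entry using $w_{1,1}+w_{2,2}=\tau$.
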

The proof of \cite[Lemma 3.5]{HT3} does not depend on the explicit expression of $W$. So this lemma still holds in our case.

\section{Longitude}
In this section, we follow the procedures in Section 5 of \cite{HT3} to compute the image of the longitude $\mathcal{L}$ under certain representation $\rho_s$, which will be defined below.

Conjugating $\rho$ by $Q= \begin{pmatrix} t-1 &  1\\0 & \sqrt{t}-1/\sqrt{t} \end{pmatrix}$, 
we can diagonalize $\rho(x)$. Call this new representation $\rho_s$. So
\begin{displaymath}
\rho_s(x)= 
\begin{pmatrix} 
\sqrt{t} &  0\\
0 & \frac{1}{ \sqrt{t} }
\end{pmatrix}, \text{ and }
\rho_s(y)= 
\begin{pmatrix} 
\frac{t-s-1}{ \sqrt{t}-\frac{1}{\sqrt{t}} } &  \frac{s}{ (\sqrt{t}-\frac{1}{\sqrt{t}}  )^2} -1\\
-s & \frac{s+1-1/t}{ \sqrt{t}-\frac{1}{\sqrt{t}}  }
\end{pmatrix}.
\end{displaymath}
We will compute $\rho_s(\mathcal{L})$.
Let 
\begin{displaymath}
\rho_s(w^n)=
\begin{pmatrix} 
v_{1,1} & v_{2,1} \\
v_{1,2} & v_{2,2} 
\end{pmatrix},
\text{ and } \sigma=\frac{s(\sqrt{t}-\frac{1}{\sqrt{t}})^2}{(\sqrt{t}-\frac{1}{\sqrt{t}})^2-s}.
\end{displaymath}
 We will first compute $\rho_s(w_*^n)$.

\begin{lemma}
\begin{displaymath}
\rho_s(w_*^n)=
\begin{pmatrix} 
v_{1,1} & \frac{v_{2,1}}{\sigma} \\
v_{1,2}\sigma & v_{2,2} 
\end{pmatrix}.
\end{displaymath}
\end{lemma}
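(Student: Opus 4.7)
The plan is to exploit the fact that $w_*$ is $w$ read right-to-left, together with the observation that matrix transpose is an anti-automorphism of matrices. If a single matrix $P$ conjugates both $\rho_s(x)$ and $\rho_s(y)$ simultaneously to their transposes, then $\rho_s(u)$ and $\rho_s(u_*)$ will be related by one clean conjugation for every word $u$ in the generators.

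First I would search for a diagonal $P=\mathrm{diag}(p_1,p_2)$ satisfying $P\rho_s(g)P^{-1}=\rho_s(g)^T$ for $g=x,y$. The equation for $x$ is automatic, since $\rho_s(x)$ is diagonal (hence symmetric). For $y$, matching off-diagonal entries collapses to the single ratio condition $p_1/p_2 = \rho_s(y)_{21}/\rho_s(y)_{12}$, and a short algebraic simplification using the explicit entries of $\rho_s(y)$ shows this ratio equals exactly $\sigma$. So $P=\mathrm{diag}(\sigma,1)$ works; the same identity extends automatically to $x^{-1},y^{-1}$ by inverting both sides.

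A routine induction on word length then gives $\rho_s(u)^T = P\,\rho_s(u_*)\,P^{-1}$ for every word $u$ in $x^{\pm 1},y^{\pm 1}$, using that transpose reverses products. Equivalently, $\rho_s(u_*) = P^{-1}\rho_s(u)^T P$. Specializing to $u=w^n$ and using the identity $(w^n)_* = (w_*)^n$ yields $\rho_s(w_*^n) = P^{-1}\rho_s(w^n)^T P$, and writing out this conjugation entrywise with $P=\mathrm{diag}(\sigma,1)$ immediately produces the stated matrix.

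I do not anticipate a real obstacle. The only $\rho_s$-specific calculation is the identification of $\rho_s(y)_{21}/\rho_s(y)_{12}$ with $\sigma$, which is just bookkeeping with the definition of $\sigma$; the rest is a general structural statement about representations whose generators are simultaneously conjugate to their transposes. The conceptual content is precisely that $\sigma$ has been engineered so that this transpose-via-conjugation symmetry holds, which is exactly why the reverse word $w_*^n$ transforms so cleanly from $w^n$.
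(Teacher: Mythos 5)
Your proof is correct and rests on the same mechanism as the paper's: the off-diagonal entries of $\rho_s(u)$ and $\rho_s(u_*)$ swap up to a factor of $\sigma$, and this relation is an anti-homomorphism of matrices, so it propagates from building blocks to the full word $w^n$. The packaging differs slightly, and yours is the cleaner of the two. The paper verifies the entry-swap relation on the two-letter syllables $xy$, $yx$, $xy^{-1}$, $y^{-1}x$, $x^{-1}y$, $yx^{-1}$ (computing the first pair directly and citing \cite[Lemma 5.1]{HT3} for the rest), then checks by a displayed computation that the relation is compatible with multiplication in reversed order. You instead identify the relation as $M\mapsto P^{-1}M^{T}P$ with $P=\mathrm{diag}(\sigma,1)$, which reduces the base case to the single generator $y$ (the $x$ case is trivial since $\rho_s(x)$ is diagonal, and inverses come for free), and makes the anti-homomorphism property automatic from the fact that transpose reverses products. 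Your computation $\rho_s(y)_{21}/\rho_s(y)_{12}=-s\big/\big(\tfrac{s}{(\sqrt{t}-1/\sqrt{t})^2}-1\big)=\sigma$ checks out, and the resulting conjugation formula $\rho_s(w_*^n)=P^{-1}\rho_s(w^n)^{T}P$ reproduces the stated matrix exactly; the only (shared, implicit) hypothesis is the generic condition $\sigma\neq 0,\infty$, i.e.\ $s\neq 0$ and $t\neq 1$.
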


\begin{proof}
It's easy to compute that
\begin{displaymath}
\rho_s(xy)=
\begin{pmatrix} 
\frac{t^2-st-t}{t-1} & \frac{s\sqrt{t}}{(t-1/t)^2} -\sqrt{t}\\
-\frac{s}{\sqrt{t}} & \frac{st+t-1}{t(t-1)}
\end{pmatrix}=
\begin{pmatrix} 
\frac{t^2-st-t}{t-1} & -\frac{s\sqrt{t}}{\sigma} \\
-\frac{s}{\sqrt{t}} & \frac{st+t-1}{t(t-1)}
\end{pmatrix},
\end{displaymath}

\begin{displaymath}
\rho_s(yx)=
\begin{pmatrix} 
\frac{t^2-st-t}{t-1} & \frac{s}{\sqrt{t}(t-1/t)^2} -\frac{1}{\sqrt{t}}\\
-s\sqrt{t} & \frac{st+t-1}{t(t-1)}
\end{pmatrix}=
\begin{pmatrix} 
\frac{t^2-st-t}{t-1} & -\frac{s}{\sqrt{t}\sigma} \\
-s\sqrt{t} & \frac{st+t-1}{t(t-1)}
\end{pmatrix}.
\end{displaymath}
We can observe that the $(1,2)$-th entry of $\rho_s(yx)$ is exactly the $(2,1)$-th entry of $\rho_s(xy)$ divided by $\sigma$, and the $(2,1)$-th entry of $\rho_s(yx)$ is exactly the $(1,2)$-th entry of $\rho_s(xy)$ multiplied by $\sigma$.  And the other two entries of $\rho_s(yx)$ and $\rho_s(xy)$ coincide.

The authors of \cite{HT3} proved in Lemma 5.1 that similar relation holds between $\rho_s(y^{-1}x)$ and $\rho_s(xy^{-1})$, and also between $\rho_s(yx^{-1})$ and $\rho_s(x^{-1}y)$. 
Moreover, they showed that this relation is preserved under matrix multiplication;
\begin{displaymath}
\begin{pmatrix} 
a & b \\
c & d
\end{pmatrix}
\begin{pmatrix} 
p & q \\
r & s
\end{pmatrix}
=
\begin{pmatrix} 
ap+br & aq+bs \\
cp+dr & cq+ds
\end{pmatrix},
\end{displaymath}

\begin{displaymath}
\begin{pmatrix} 
p & \frac{r}{\sigma} \\
q\sigma & s
\end{pmatrix}
\begin{pmatrix} 
a & \frac{c}{\sigma} \\
b\sigma & d
\end{pmatrix}
=\begin{pmatrix} 
ap+br & \frac{cp+dr}{\sigma} \\
(aq+bs)\sigma & cq+ds
\end{pmatrix}.
\end{displaymath}
Therefore the same relation must also hold for $\rho_s(w^n)$ and $\rho_s(w_*^n)$.
\end{proof}

\begin{lemma}\cite[Lemma 5.4]{HT3}\label{uw}
For $U=\begin{pmatrix} u_{1,1} & u_{1,2} \\ u_{2,1} & u_{2,2}\end{pmatrix}=\rho_s(w)$, 
\begin{align*}
&u_{1,1}=w_{1,1}+\frac{w_{2,1}}{t-1}, & &u_{1,2}=\sqrt{t}\left(w_{1,2}-\frac{w_{1,1}}{t-1}\right)+\frac{\sqrt{t}}{t-1}\left(w_{2,2}-\frac{w_{2,1}}{t-1}\right), \\
&u_{2,1}=\frac{w_{2,1}}{\sqrt{t}}, & &u_{2,2}=w_{2,2}-\frac{w_{2,1}}{t-1}.
\end{align*}
\end{lemma}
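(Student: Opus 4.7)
The plan is to prove the lemma by direct matrix multiplication. Since $\rho_s$ is defined as the conjugation of $\rho$ by $Q$, we have $\rho_s(g) = Q\rho(g)Q^{-1}$ for every group element $g$, and in particular $U = QWQ^{-1}$. The first step is to compute $Q^{-1}$ explicitly. Using $\sqrt{t} - 1/\sqrt{t} = (t-1)/\sqrt{t}$, the determinant of $Q$ equals $(t-1)^2/\sqrt{t}$, and the usual $2\times 2$ inverse formula yields
\[
Q^{-1} = \begin{pmatrix} \dfrac{1}{t-1} & -\dfrac{\sqrt{t}}{(t-1)^2} \\ 0 & \dfrac{\sqrt{t}}{t-1} \end{pmatrix}.
\]

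The second step is to multiply out $QWQ^{-1}$ one entry at a time. Since both $Q$ and $Q^{-1}$ are upper triangular, several terms vanish: the second row of $QW$ is simply $(\sqrt{t}-1/\sqrt{t})$ times the second row of $W$, which makes the $(2,1)$-entry of the triple product collapse to $w_{2,1}/\sqrt{t}$ after using $(\sqrt{t}-1/\sqrt{t})\cdot\sqrt{t}/(t-1)=1$. The $(1,1)$- and $(2,2)$-entries each involve only two terms and reduce to $w_{1,1} + w_{2,1}/(t-1)$ and $w_{2,2} - w_{2,1}/(t-1)$ respectively after cancelling $(t-1)$ factors. The $(1,2)$-entry is the only one involving all four entries of $W$; here I would carefully group the contributions from the $-\sqrt{t}/(t-1)^2$ and $\sqrt{t}/(t-1)$ columns of $Q^{-1}$ to obtain $\sqrt{t}(w_{1,2} - w_{1,1}/(t-1)) + (\sqrt{t}/(t-1))(w_{2,2} - w_{2,1}/(t-1))$ as claimed.

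There is no real obstacle in this proof — it is essentially a bookkeeping exercise. The one subtlety worth flagging at the outset is confirming the direction of the conjugation ($U = QWQ^{-1}$ versus $U = Q^{-1}WQ$); I would verify the convention once and for all by checking that $Q\rho(x)Q^{-1}$ reproduces the diagonal matrix $\rho_s(x) = \mathrm{diag}(\sqrt{t}, 1/\sqrt{t})$ given in the preceding paragraph, and only then carry out the entry-by-entry computation for $W$. Everything else is symbolic simplification using the single identity $\sqrt{t}-1/\sqrt{t}=(t-1)/\sqrt{t}$.
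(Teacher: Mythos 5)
Your proof is correct: the conjugation convention $\rho_s = Q\rho Q^{-1}$ is the right one (as your check against $\rho_s(x)=\mathrm{diag}(\sqrt{t},1/\sqrt{t})$ confirms), your $Q^{-1}$ is right, and all four entries of $QWQ^{-1}$ come out as stated. The paper itself gives no proof, simply citing Lemma 5.4 of Hakamata--Teragaito and noting it carries over since no explicit form of $W$ is used; your direct entry-by-entry computation is exactly the argument being invoked.
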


\begin{lemma}\cite[Lemma 5.5]{HT3}\label{uv}
For $U^n=\begin{pmatrix} v_{1,1} & v_{2,1} \\v_{1,2} & v_{2,2} \end{pmatrix}=\rho_s(w)^n$, 
we have
\begin{align*}
&v_{1,1}=u_{1,1}\tau_n-\tau_{n-1}, & &v_{1,2}=u_{1,2}\tau_n, \\
&v_{2,1}=u_{2,1}\tau_n, & &v_{2,2}=\tau_{n+1}-u_{1,1}\tau_n.
\end{align*}
\end{lemma}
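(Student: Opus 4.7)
The plan is to observe that this is a purely formal statement about any $\mathrm{SL}_2$ matrix with trace $\tau$, so that the argument of \cite[Lemma 3.5]{HT3} --- which the author has already noted is structure-independent --- applies verbatim once one checks that $U$ has the appropriate trace and determinant.

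First I would note that $U = \rho_s(w) = Q^{-1}\rho(w)Q = Q^{-1}WQ$ is conjugate to $W$, and therefore shares both its determinant $\det U = 1$ and its trace $\mathrm{tr}\, U = \tau = z + 1/z$. In particular, the scalars $\tau_n$ introduced before Lemma 3.5 are the same whether computed from the eigenvalues of $W$ or of $U$, and they satisfy the recursion $\tau_{n+2} = \tau\tau_{n+1} - \tau_n$ with $\tau_0 = 0$, $\tau_1 = 1$, $\tau_2 = \tau$. Cayley--Hamilton gives $U^2 = \tau U - I$.

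Next I would proceed by induction on $n$. The base case $n=1$ is immediate: three of the four entries match tautologically, and the $(2,2)$ entry $u_{2,2} = \tau - u_{1,1}$ is just the trace relation. For the inductive step, I would write $U^{n+1} = U \cdot U^n$, multiply out each entry using the inductive hypothesis, and simplify using $u_{1,1}u_{2,2} - u_{1,2}u_{2,1} = 1$ together with the recursion on $\tau_n$. For instance, the $(1,1)$ entry of $U^{n+1}$ becomes $u_{1,1}(u_{1,1}\tau_n - \tau_{n-1}) + u_{1,2}u_{2,1}\tau_n = (u_{1,1}^2 + u_{1,2}u_{2,1})\tau_n - u_{1,1}\tau_{n-1}$, which after substituting $u_{1,2}u_{2,1} = u_{1,1}u_{2,2} - 1$ and $u_{1,1} + u_{2,2} = \tau$ collapses to $u_{1,1}\tau_{n+1} - \tau_n$, as desired; the other three entries are handled in exactly the same way.

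There is no real obstacle here --- the argument is pure bookkeeping and uses only $\det U = 1$ and $\mathrm{tr}\, U = \tau$. In fact, one could simply quote \cite[Lemma 3.5]{HT3} and invoke the author's earlier remark that its proof does not depend on the explicit form of the matrix.
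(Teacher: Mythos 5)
Your proposal is correct and matches the paper's approach: the paper itself disposes of this lemma by noting that the proof of \cite[Lemma 5.5]{HT3} uses no explicit form of the entries of $W$, which is precisely your closing observation, and the Cayley--Hamilton/induction argument you spell out (equivalently $U^n=\tau_n U-\tau_{n-1}I$) is the standard proof being invoked. The only caveat is notational: the paper writes $U^n$ with the off-diagonal labels $v_{1,2}$ and $v_{2,1}$ in transposed positions, so your standard-index computation agrees with the statement once that quirk is accounted for.
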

Again, the above two lemmas still hold because no explicit expression of entries of $W$ is involved.

Let $B_s$ be the $(1,1)$-entry of $\rho_s(\mathcal{L})$. In our case, $B_s$ obtains the form in the following lemma, which is different from \cite[Lemma 5.6]{HT3}.

\begin{lemma}\label{Bs}
$\displaystyle B_s=-\frac{u_{2,1}}{u_{1,2}\sigma}t^{-2n}$.
\end{lemma}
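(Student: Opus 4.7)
The plan is to evaluate $\rho_s(\mathcal{L}) = \rho_s(w_*^n)\rho_s(w^n)\rho_s(x)^{-4n}$ directly and read off its $(1,1)$-entry. Setting $A = \rho_s(w_*^n)$ and $B = \rho_s(w^n)$, both matrices are known entry-by-entry from the preceding lemma and Lemma \ref{uv} in terms of $v_{1,1}, v_{1,2}, v_{2,1}, v_{2,2}$ and $\sigma$. Since $\rho_s(x)^{-4n}$ is the diagonal matrix with diagonal entries $t^{-2n}$ and $t^{2n}$, one has $B_s = (AB)_{1,1}\cdot t^{-2n}$, and a direct expansion yields
\[
(AB)_{1,1} = v_{1,1}^2 + \frac{v_{2,1}^2}{\sigma}.
\]

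The key observation is that the longitude $\mathcal{L}$ commutes with the meridian $\mathcal{M} = x$ in $\pi_1(S^3 - K)$, so $\rho_s(\mathcal{L})$ commutes with $\rho_s(x) = \mathrm{diag}(\sqrt{t}, 1/\sqrt{t})$. For generic $t$ these eigenvalues are distinct, which forces $\rho_s(\mathcal{L})$ itself to be diagonal; computing the $(2,1)$-entry of $AB$ and requiring that it vanish gives the identity
\[
\sigma v_{1,1} v_{1,2} + v_{2,1} v_{2,2} = 0.
\]

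The remaining step is a short algebraic elimination using $\det \rho_s(w^n) = v_{1,1}v_{2,2} - v_{1,2}v_{2,1} = 1$: multiplying this determinant relation by $v_{2,1}$ and substituting $v_{2,1}v_{2,2} = -\sigma v_{1,1}v_{1,2}$ converts $v_{1,1}^2 + v_{2,1}^2/\sigma$ into $-v_{2,1}/(v_{1,2}\sigma)$. Invoking Lemma \ref{uv} to write $v_{2,1} = u_{2,1}\tau_n$ and $v_{1,2} = u_{1,2}\tau_n$ cancels the common factor $\tau_n$ and produces
\[
B_s = -\frac{u_{2,1}}{u_{1,2}\sigma}\, t^{-2n}.
\]
The only real obstacle is bookkeeping: one must carefully track the direction in which the off-diagonal entries of $\rho_s(w_*^n)$ and $\rho_s(w^n)$ differ by the factor $\sigma$, so that the coefficients appearing in $(AB)_{1,1}$ and $(AB)_{2,1}$ come out correctly. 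Once that is in place, the elimination is a two-line calculation.
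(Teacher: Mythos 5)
Your proposal is correct and follows essentially the same route as the paper: expand $\rho_s(w_*^n)\rho_s(w^n)\rho_s(x)^{-4n}$, use $\det\rho_s(w^n)=1$ together with the vanishing of the $(2,1)$-entry (equivalently $\sigma v_{1,1}v_{1,2}+v_{2,1}v_{2,2}=0$) to reduce $v_{1,1}^2+v_{2,1}^2/\sigma$ to $-v_{2,1}/(v_{1,2}\sigma)$, then cancel $\tau_n$ via Lemma \ref{uv}. Your added justification that $\rho_s(\mathcal{L})$ must be diagonal because it commutes with $\rho_s(x)=\mathrm{diag}(\sqrt{t},1/\sqrt{t})$ (distinct eigenvalues for generic $t$) is a point the paper asserts without argument, so this is a welcome small improvement rather than a deviation.
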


\begin{proof}
We compute $\rho_s(\mathcal{L})=\rho_s(w_*^nw^nx^{-4n})$ first. 
\begin{align*}
\rho_s(x)^{-4n}&=Q\rho(x)^{-4n}Q^{-1}
=Q\begin{pmatrix} 
1/t & -1-1/t \\
0 & t
\end{pmatrix}^{2n}Q^{-1}\\
&=Q\begin{pmatrix} 
1/t^{2n} & * \\
0 & t^{2n}
\end{pmatrix}^{2n}Q^{-1}
=\begin{pmatrix} 
1/t^{2n} & * \\
0 & t^{2n}
\end{pmatrix}.
\end{align*}
Then
\begin{align*}
\rho_s(\mathcal{L})=\rho_s(w_*^n)\rho_s(w^n)\rho_s(x)^{-4n}&=
\begin{pmatrix} 
v_{1,1} & \frac{v_{2,1}}{\sigma} \\
v_{1,2}\sigma & v_{2,2} 
\end{pmatrix}
\begin{pmatrix} 
v_{1,1} & v_{1,2} \\
v_{2,1} & v_{2,2} 
\end{pmatrix}
\begin{pmatrix} 
1/t^{2n} & * \\
0 & t^{2n}
\end{pmatrix}\\
&=\begin{pmatrix} 
(v_{1,1}^2+ \frac{v_{2,1}^2}{\sigma}) t^{-2n} & *\\
(v_{1,1}v_{1,2}\sigma+ v_{2,1}v_{2,2})  t^{-2n}& *
\end{pmatrix}.
\end{align*}

So $\displaystyle B_s=(v_{1,1}^2+ \frac{v_{2,1}^2}{\sigma}) t^{-2n}$. To simplify $B_s$, we need to use the relation det$U^n=v_{1,1}v_{2,2}-v_{1,2}v_{2,1}=1$. Moreover $v_{1,1}v_{1,2}\sigma+v_{2,1}v_{2,2}=0$, because $\rho_s(\mathcal{L})$ is diagonal. Then

\begin{align*}
B_s&=(v_{1,1}^2+ \frac{v_{2,1}^2}{\sigma}) t^{-2n}=(-\frac{v_{1,1}v_{2,2}}{v_{1,2}} + v_{2,1})  \frac{v_{2,1}}{\sigma} t^{-2n}\\
&=(-\frac{v_{1,2}v_{2,1}+1}{v_{1,2}} + v_{2,1})  \frac{v_{2,1}}{\sigma} t^{-2n}= -\frac{v_{2,1}}{v_{1,2}\sigma} t^{-2n}\\
&\stackrel{Lemma\ \ref{uv}}{=\joinrel=\joinrel=} -\frac{u_{2,1}\tau_n}{u_{1,2}\tau_n\sigma} t^{-2n}= -\frac{u_{2,1}}{u_{1,2}\sigma} t^{-2n}.
\end{align*}

\end{proof}

\begin{proposition}\label{Bs2}
Let $B_s$ be the $(1,1)$-entry of $\rho_s(\mathcal{L})$, where $\mathcal{L}$ is the longitude of $J(2m+1,2n)$. Then
\begin{equation*}
B_s=\frac{g_m-tg_{m-1}}{g_{m-1}-tg_m}t^{-2n}.
\end{equation*}
\end{proposition}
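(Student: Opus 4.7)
The plan is to start from Lemma \ref{Bs}, which already gives $B_s = -u_{2,1}/(u_{1,2}\sigma)\,t^{-2n}$, and explicitly evaluate the ratio $u_{2,1}/(u_{1,2}\sigma)$ in terms of $f_m,g_m,g_{m-1},t$. So everything reduces to showing
\[
-\frac{u_{2,1}}{u_{1,2}\sigma} \;=\; \frac{g_m-tg_{m-1}}{g_{m-1}-tg_m}.
\]

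The first (easy) step is to read off $u_{2,1}$ from Lemma \ref{uw} and Proposition \ref{W}: since $u_{2,1}=w_{2,1}/\sqrt{t}$ and $w_{2,1}=stf_mg_{m-1}-sf_mg_m=-sf_m(g_m-tg_{m-1})$, the target factor $g_m-tg_{m-1}$ is already visible in $u_{2,1}$. Similarly, using $(\sqrt{t}-1/\sqrt{t})^{2}=(t-1)^2/t$, the quantity $\sigma$ simplifies to $s(t-1)^2/[(t-1)^2-st]$, which will contribute the $s$ and the $(t-1)^2$ factors needed for cancellation.

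The second (real) step is to simplify $u_{1,2}$. From Lemma \ref{uw},
\[
(t-1)^{2}\,u_{1,2}\;=\;\sqrt{t}\Bigl[(t-1)^{2}w_{1,2}\;+\;(t-1)(w_{2,2}-w_{1,1})\;-\;w_{2,1}\Bigr].
\]
I would substitute the four entries of $W$ from Proposition \ref{W} and then use Lemma \ref{relations}(1), i.e.\ $f_m=g_m-g_{m-1}$, at two places: to rewrite $g_m^{2}-g_{m-1}^{2}=f_m(g_m+g_{m-1})$ inside $w_{2,2}-w_{1,1}$, and to rewrite $f_m(t+1)=(tg_m-g_{m-1})+(g_m-tg_{m-1})$ when combining the $(t-1)^2/t$ terms. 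With these two rewrites the bracket should collapse neatly to $f_m(tg_m-g_{m-1})\,[st-(t-1)^{2}]/t$, giving
\[
u_{1,2}\;=\;-\,\frac{f_m(tg_m-g_{m-1})\bigl[(t-1)^{2}-st\bigr]}{\sqrt{t}\,(t-1)^{2}}.
\]

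The final step is pure bookkeeping: form $u_{2,1}/(u_{1,2}\sigma)$, at which point the $sf_m$ factors, the $(t-1)^{2}$ factors, and the $(t-1)^{2}-st$ factors all cancel between $u_{1,2}$ and $\sigma$, leaving $u_{2,1}/(u_{1,2}\sigma)=(g_m-tg_{m-1})/(tg_m-g_{m-1})$; inserting the sign from Lemma \ref{Bs} yields the stated formula. The one genuine obstacle is the collapse in the second step: the bracket $(t-1)^{2}w_{1,2}+(t-1)(w_{2,2}-w_{1,1})-w_{2,1}$ is a sum of seven monomials in $f_m,g_m,g_{m-1}$, and without the substitution $f_m=g_m-g_{m-1}$ the common factor $tg_m-g_{m-1}$ is not visible. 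Once that identity is used, the rest is linear algebra.
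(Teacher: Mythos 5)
Your proposal is correct and follows essentially the same route as the paper: start from Lemma \ref{Bs}, compute $u_{2,1}$ and $u_{1,2}$ from Lemma \ref{uw} and Proposition \ref{W}, and use Lemma \ref{relations}(1) to extract the common factor $f_m(tg_m-g_{m-1})$ from $u_{1,2}$ before cancelling against $\sigma$. Your organization (clearing the $(t-1)^2$ denominators first and isolating the two uses of $f_m=g_m-g_{m-1}$) is a slightly cleaner bookkeeping of the same computation, and your claimed collapsed form of the bracket checks out.
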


\begin{proof}
We compute directly using Lemma \ref{uw} and Proposition \ref{W}. 
\begin{equation*}
u_{2,1}=\frac{w_{2,1}}{\sqrt{t}}=\frac{s}{\sqrt{t}}f_m(tg_{m-1}-g_m), 
\end{equation*}

\begin{align*}
&u_{1,2}=\sqrt{t}\left(w_{1,2}-\frac{w_{1,1}}{t-1}\right)+\frac{\sqrt{t}}{t-1}\left(w_{2,2}-\frac{w_{2,1}}{t-1}\right)\\
&=\frac{\sqrt{t}}{t(t-1)} \left((t-1)f_mg_m-(t^2-t)f_mg_{m-1}-t^2f_m^2+stg_m^2\right)\\
&+\frac{\sqrt{t}}{t(t-1)} \left(-stg_{m-1}^2+f_m^2-\frac{st^2}{t-1}f_mg_{m-1}+\frac{st}{t-1}f_mg_m\right)\\
&=\frac{\sqrt{t}}{t(t-1)}\left( (t-1+\frac{st}{t-1})f_mg_m-(t^2-t+\frac{st^2}{t-1})f_mg_{m-1}+(1-t^2)f_m^2+st(g_m^2-g_{m-1}^2) \right)\\
&=\frac{\sqrt{t}}{t(t-1)}f_m\left( (t-1+\frac{st}{t-1})g_m-(t^2-t+\frac{st^2}{t-1})g_{m-1}+(1-t^2)(g_m-g_{m-1})+st(g_m+g_{m-1}) \right)\\
&=\frac{\sqrt{t}}{t(t-1)}f_m\left( (t+\frac{st}{t-1}-t^2+st)g_m-(-t+\frac{st^2}{t-1}+1-st)g_{m-1} \right)\\
&=\frac{\sqrt{t}(st-(t-1)^2)}{t(t-1)^2}f_m\left( tg_m-g_{m-1} \right)=-\frac{s}{\sqrt{t}\sigma}f_m\left( tg_m-g_{m-1} \right).
\end{align*}
Applying Lemma \ref{Bs}, we have
\begin{equation*}
B_s=-\frac{u_{2,1}}{u_{1,2}\sigma}=\frac{g_m-tg_{m-1}}{g_{m-1}-tg_m}t^{-2n}.
\end{equation*}

\end{proof}

\section{Root of the Riley Polynomial}
The Riley polynomial of two-bridge knot was first studied by Robert Riley in his paper \cite{riley}. The root of the Riley polynomial of a two-bridge knot $K$ describes all the nonabelian PSL$_2(\mathbb{C})$ representations of $\pi_1(S^3-K)$. By examining real roots of the Riley polynomial of $K$, we will be able to construct PSL$_2(\mathbb{R})$ representations that we need.
\begin{proposition}
The Riley polynomial of $K=J(2m+1,2n)$ is 
\begin{equation}\label{rileyp}
\phi_K(s,t)=(\tau_{n+1}-\tau_{n})+(t+1/t-s-2)f_mg_{m-1}\tau_n,
\end{equation}
where $\tau=(t+1/t-s-2)f_m^2+2$.
\end{proposition}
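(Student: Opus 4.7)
The plan is to derive the Riley polynomial directly from the single defining relation $w^n x = y w^n$ of $\pi_1(S^3 - J(2m+1, 2n))$. Under a nonabelian representation $\rho$ this relation becomes the matrix equation $W^n \rho(x) = \rho(y) W^n$. I would expand both sides using the explicit forms of $\rho(x), \rho(y)$ and the formula for $W^n$ from the preceding lemma, then extract the genuine scalar constraint from the entries.

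Comparing entries, the $(1,1)$-entries agree tautologically, while the $(2,2)$-entries agree iff $w_{2,1} = -st\, w_{1,2}$, which is immediate from Proposition \ref{W}. The same identity makes the $(1,2)$- and $(2,1)$-equations equivalent. So the only substantive constraint comes from the $(1,2)$-entry, and a short computation reduces it to
\begin{equation*}
(w_{1,1} - (t-1)w_{1,2})\tau_n \;=\; \tau_{n-1}.
\end{equation*}
Using the Chebyshev recursion $\tau_{n-1} = \tau\tau_n - \tau_{n+1}$, I would rewrite this as
\begin{equation*}
\tau_{n+1} + \bigl(w_{1,1} - (t-1)w_{1,2} - \tau\bigr)\tau_n \;=\; 0,
\end{equation*}
so the Riley polynomial is this left-hand side.

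The remaining work is to massage the coefficient $w_{1,1} - (t-1)w_{1,2} - \tau$ into the claimed shape. Substituting $w_{1,1} = tf_m^2 - sg_m^2$ and $w_{1,2} = f_m g_m/t - f_m g_{m-1}$ from Proposition \ref{W}, I would collect the $f_m^2$ and $g_m^2$ contributions, then use Lemma \ref{relations}(3), namely $f_m^2 = sg_m g_{m-1} + 1$, to show
\begin{equation*}
w_{1,1} - (t-1)w_{1,2} \;=\; (t + 1/t - s - 2)\, f_m g_m + 1.
\end{equation*}
Combining this with $\tau = (t + 1/t - s - 2)f_m^2 + 2$ and applying Lemma \ref{relations}(1) in the form $g_m - f_m = g_{m-1}$ collapses the coefficient of $\tau_n$ to $(t+1/t-s-2)f_m g_{m-1} - 1$, which gives exactly $\phi_K = (\tau_{n+1} - \tau_n) + (t+1/t-s-2)f_m g_{m-1}\tau_n$.

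The main obstacle is the final algebraic simplification of $w_{1,1} - (t-1)w_{1,2}$; the expression mixes $f_m^2$, $g_m^2$, and $f_m g_m$ with rational functions of $t$, and one must apply the quadratic identity $f_m^2 - sg_m g_{m-1} = 1$ at exactly the right moment to eliminate the spurious $sg_m^2$ term. Everything else is routine matrix multiplication and the recursion for $\tau_k$.
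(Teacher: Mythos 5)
Your proposal is correct and takes essentially the same route as the paper: both reduce the problem to the single constraint $z_{1,1}+(1-t)z_{1,2}=0$, write $z_{i,j}$ in terms of $w_{i,j}$ and $\tau_n$, use the recursion $\tau_{n-1}=\tau\tau_n-\tau_{n+1}$, and collapse the coefficient of $\tau_n$ via Lemma \ref{relations}(1) and (3). The only difference is presentational: you re-derive the formula $z_{1,1}+(1-t)z_{1,2}$ directly from the relation $w^nx=yw^n$ by comparing matrix entries (correctly noting that $w_{2,1}=-st\,w_{1,2}$ makes the other entries redundant), whereas the paper simply cites \cite[Theorem 1]{riley}; your final coefficient $(t+1/t-s-2)f_mg_{m-1}-1$ matches the paper's $(1-t)w_{1,2}+1-w_{2,2}=(t+1/t-s-2)f_mg_{m-1}$ once the extra $-\tau_n$ is absorbed.
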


\begin{proof}
By \cite[Theorem 1]{riley}, the Riley polynomial for a two-bridge knot $K$ is $\phi_K(s,t)=z_{1,1}+(1-t)z_{1,2}$. So for the double twist knot $J(2m+1,2n)$, 
\begin{align*}
\phi_K(s,t)
&=(w_{1,1}\tau_n-\tau_{n-1})+(1-t)w_{1,2}\tau_n\\
&=(\tau_{n+1}-\tau_{n})+((1-t)w_{1,2}+1-w_{2,2})\tau_n.
\end{align*}
\begin{align*}
(1-t)w_{1,2}+1-w_{2,2}&=(1-t)(\frac{f_m g_m}{t}-f_m g_{m-1})+f_m^2-sg_mg_{m-1}-(\frac{f_m^2}{t} -sg_{m-1}^2)\\
&=(t+1/t-s-2)f_mg_{m-1}\tau_n.
\end{align*}
So $\phi_K(s,t)=(\tau_{n+1}-\tau_{n})+(t+1/t-s-2)f_mg_{m-1}\tau_n$.

Direct computation using Proposition \ref{W} and Lemma \ref{relations} shows 
\begin{equation}\label{tau}
\tau=\text{tr} W=tf_m^2 - sg_m^2+ \frac{f_m^2}{t} -sg_{m-1}^2=(t+1/t-s-2)f_m^2+2.
\end{equation}
\end{proof}

\begin{lemma}
When $m>0$, both $f_m$ and $g_m$ have $m$ simple negative roots. 
\end{lemma}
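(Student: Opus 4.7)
The plan is to diagonalize the two linear recursions by trigonometric substitution, obtain closed Chebyshev-type formulas for $f_m$ and $g_m$, and then read off the roots directly.

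First, I would set $s+2 = 2\cos\theta$ with $\theta \in (0,\pi)$, so that $s = 2\cos\theta - 2 \in (-4, 0)$. The common recurrence $a_{m+2} - (s+2)a_{m+1} + a_m = 0$ becomes $a_{m+2} - 2\cos\theta \cdot a_{m+1} + a_m = 0$, whose characteristic equation $x^2 - 2\cos\theta\, x + 1 = 0$ has roots $e^{\pm i\theta}$. Hence every real solution has the form $A\cos(m\theta) + B\sin(m\theta)$.

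Second, I would determine $A$ and $B$ from the initial conditions. Using $f_0 = 1$ and $f_1 = s+1 = 2\cos\theta - 1$, together with the identity $\cos((m+\tfrac12)\theta) = \cos(m\theta)\cos(\theta/2) - \sin(m\theta)\sin(\theta/2)$, I would obtain
\[
f_m(s) = \frac{\cos\bigl((m+\tfrac12)\theta\bigr)}{\cos(\theta/2)}.
\]
Similarly, from $g_0 = 1$ and $g_1 = s+2 = 2\cos\theta$, one gets
\[
g_m(s) = \frac{\sin((m+1)\theta)}{\sin\theta}.
\]
These are the expected Chebyshev-type expressions; the apparent singularities at $\theta = 0$ (and $\theta = \pi$ for $g_m$) are removable, as one checks from the definition that $f_m, g_m$ are polynomials of degree $m$ in $s$.

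Third, I would count roots. The zeros of $f_m$ in the strip $\theta \in (0,\pi)$ are precisely the solutions of $\cos((m+\tfrac12)\theta) = 0$, namely $\theta_k = \frac{(2k+1)\pi}{2m+1}$ for $k = 0, 1, \dots, m-1$. These are $m$ distinct values, each producing a distinct $s_k = 2\cos\theta_k - 2 \in (-4, 0)$. Since $\deg f_m = m$, these account for all roots and each is simple. The same argument applied to $g_m$ gives $m$ distinct simple roots in $(-4,0)$ at $\theta_k = \frac{k\pi}{m+1}$, $k = 1, \dots, m$. Finally, the closed-form coefficient formulas already recorded in the paper show that every coefficient of $f_m$ and $g_m$ is positive, so neither polynomial can vanish at any $s \geq 0$; therefore all $m$ roots are negative, as required.

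The only step that calls for any care is the verification of the closed forms in the second step; this reduces to a direct induction or matching the initial conditions, and is routine. Once those are in hand, the conclusion is just counting zeros of trigonometric polynomials on $(0,\pi)$.
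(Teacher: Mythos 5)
Your proof is correct. For $g_m$ you do exactly what the paper does: identify $g_m(2\cos\theta-2)$ with $\sin((m+1)\theta)/\sin\theta$ (the Chebyshev polynomial of the second kind) and read off the $m$ simple roots $2\cos\bigl(\tfrac{k\pi}{m+1}\bigr)-2$. Where you genuinely diverge is in handling $f_m$: the paper does \emph{not} write a closed form for $f_m$, but instead runs a sign-change argument, evaluating $f_m$ at the roots of $g_{m-1}$ (where $f_m=g_m$ by the identity $f_m+g_{m-1}=g_m$) and using the interlacing of the roots of $g_{m-1}$ with those of $g_m$ to exhibit $m$ sign changes of $f_m$, together with $f_m(0)=1>0$. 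Your route instead derives the explicit formula $f_m(2\cos\theta-2)=\cos\bigl((m+\tfrac12)\theta\bigr)/\cos(\theta/2)$ (a Chebyshev polynomial of the third/fourth kind), which checks out against the recurrence and the initial values $f_0=1$, $f_1=2\cos\theta-1$, and then the roots $2\cos\bigl(\tfrac{(2k+1)\pi}{2m+1}\bigr)-2$, $k=0,\dots,m-1$, are immediate; since these are $m$ distinct points in $(-4,0)$ and $\deg f_m=m$, they are all the roots and each is simple. Your argument is shorter and yields the exact root locations of $f_m$, which the paper's interlacing argument does not; the paper's version avoids verifying a second closed form and reuses machinery (Lemma 2.3) needed elsewhere. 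One tiny slip: the removable singularity of your expression for $f_m$ sits at $\theta=\pi$ (where $\cos(\theta/2)=0$), not at $\theta=0$; this is immaterial since all the roots you exhibit lie in the open interval $\theta\in(0,\pi)$.
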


\begin{proof}
First, we show both $f_m$ and $g_m$ have at least one sign change when $m\neq 0$. Notice that both $f_m$ and $g_m$ are monic polynomials in terms of $s$ of degree $m$. In particular $g_m(2x-2)$ is the Chebyshev Polynomial of the second kind. 
Therefore $g_m$ has exactly $m$ different negative simple roots $2\cos\left(\frac{k\pi}{m+1}\right)-2$, $k=1,2,\cdots m$. 

When $s\rightarrow \infty$, both $f_m$ and $g_m$ approach $+\infty$. So $g_m<0$ on $(2\cos \left(\frac{2j\pi}{m+1}\right)-2, 2\cos\left(\frac{(2j-1)\pi}{m+1}\right)-2)$, while $g_m>0$ on $(2\cos \left(\frac{(2j+1)\pi}{m+1}\right)-2, 2\cos\left(\frac{2j\pi}{m+1}\right)-2)$ and $(2\cos\left(\frac{\pi}{m+1}\right)-2,\infty)$, $j=1, \ldots, \lfloor\frac{m}{2}\rfloor$. 
Since $g_{m-1}\left(2\cos\left(\frac{(2j-1)\pi}{m}\right)-2\right)=0$ and $2\cos\left(\frac{(2j-1)\pi}{m}\right)-2\in \left(2\cos\left(\frac{2j\pi}{m+1}\right)-2,2\cos\left(\frac{(2j-1)\pi}{m+1}\right)-2\right)$ for $j=1, \ldots, \lfloor\frac{m}{2}\rfloor$, then $f_{m}\left(2\cos\left(\frac{(2j-1)\pi}{m}\right)-2\right)<0$  by Lemma \ref{relations} (1). 
Similarly, we have $2\cos\left(\frac{2j\pi}{m}\right)-2\in \left(2\cos\left(\frac{(2j+1)\pi}{m+1}\right)-2,2\cos\left(\frac{2j\pi}{m+1}\right)-2\right)$ and $g_{m-1}\left(2\cos\left(\frac{2j\pi}{m}\right)-2\right)=0$ for $j=1, \ldots, \lceil\frac{m}{2}\rceil-1$. Then it follows from Lemma \ref{relations} (1), that $f_{m}\left(2\cos\left(\frac{2j\pi}{m}\right)-2\right)>0$. And moreover $f_m(0)=1>0$. Then the degree $m$ polynomial $f_m$ has exactly $m$ sign changes, which implies that $f_m$ should have at exactly $m$ simple roots. 
\end{proof}

Let $r_{f_m}<0$ be the largest root of $f_m$, i.e. $r_{f_m}$ has the smallest absolute value among all roots of $f_m$. Then for $m>0$, both $g_m$ and $g_{m-1}$ are positive on $(r_{f_m}, 0)$.  
Setting $t=1$ in the Riley polynomial $\phi_K(s,t)=(\tau_{n+1}-\tau_{n})+(t+1/t-s-2)f_mg_{m-1}\tau_n=0$, we get $\phi_K(s,1)=(\tau_{n+1}-\tau_{n})-sf_mg_{m-1}\tau_n=0$, with $\tau=-sf_m^2+2$. Then we can choose $s_0>0$ to be the smallest positive solution of $\phi_K(s,1)=0$ with respect to $s$. 

\begin{proposition}\label{riley_sol}
Set $T=t+\frac{1}{t}$. 
\begin{itemize}
\item[(1)] When $n>1$ or $n=1$ $m>0$, the Riley polynomial (\ref{rileyp}) has a root $\frac{-C_2}{f_m^2}<T-s-2<\frac{-C_1}{f_m^2}$ for $s>0$, where $C_1$ and $C_2$ are positive constants. Moreover, $0<T<2$ for $s\in (0, s_0)$.  \\ 
\item[(2)] When $m<-1$ and $n>0$, the Riley polynomial has a root $\frac{C_3}{f_m}<T-s-2<\frac{C_4}{f_m}$ with $r_{f_m}<s<0$, where $C_3$ and $C_4$ are positive constants. Moreover, $T>2$ for $s\in (r_{f_m}, 0)$.
\end{itemize}
\end{proposition}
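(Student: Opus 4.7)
The plan is to substitute $T := t + 1/t$ and $A := T - s - 2$, so that by (\ref{tau}) one has $\tau = A f_m^2 + 2$, and every $\tau_k$ appearing in (\ref{rileyp}) becomes a polynomial in $A$ whose coefficients depend on $s$. For fixed $s$ in the stated range I then regard the Riley polynomial as a function of the single variable $A$ and argue by the intermediate value theorem.

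For Case (1), fix $s \in (0, s_0)$. On the window $-4/f_m^2 < A < 0$ the value $\tau$ lies in $(-2, 2)$, so I parametrize $\tau = 2\cos\psi$ and use the identities $\tau_k = \sin(k\psi)/\sin\psi$ and $\tau_{n+1} - \tau_n = (\sin((n+1)\psi) - \sin(n\psi))/\sin\psi$. At $A = 0$ the limit $\psi \to 0^+$ gives $\tau_k \to k$ and $\phi_K \to 1 > 0$. The constants $C_1 < C_2$ should be chosen so that at $A = -C_i/f_m^2$ the parameter $\psi$ takes a value that forces either $\tau_n$ or $\tau_{n+1} - \tau_n$ to vanish; the remaining summand of $\phi_K$ then has a definite sign controlled by $A$, $f_m$, and $g_{m-1}$ (all of which have controlled sign on $(0, s_0)$ by the preceding lemma). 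Choosing $C_1, C_2$ so that these residual signs are opposite produces the required sign change and yields a solution $A_\ast$ by IVT. The second assertion $0 < T < 2$ is equivalent to $A_\ast < -s$; this holds on $(0, s_0)$ because $s_0$ is, by its definition as the smallest positive root of $\phi_K(s,1) = 0$, precisely the first $s$ at which the solution curve crosses $T = 2$.

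Case (2) is analogous but with $\tau > 2$. For $m < -1$ and $s \in (r_{f_m}, 0)$ I look for $A > 0$ and parametrize $\tau = 2\cosh\xi$, so that $\tau_k = \sinh(k\xi)/\sinh\xi > 0$ and hence $\tau_{n+1} - \tau_n > 0$. The sign of $\phi_K$ is then a competition between this positive first summand and $A f_m g_{m-1} \tau_n$, whose sign is controlled by $f_m g_{m-1}$ on $(r_{f_m}, 0)$ (where $f_m > 0$ by the definition of $r_{f_m}$, and $g_{m-1}$ has a constant sign obtained by transferring the preceding positivity lemma from positive to negative indices via $f_{-m} = f_{m-1}$). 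The constants $C_3 < C_4$ are picked so that at the endpoints $A = C_i/f_m$ one of the two summands dominates with opposite sign, and IVT supplies the solution. The bound $T > 2$ is just $A > -s$, which is automatic because $-s$ is small on $(r_{f_m}, 0)$ while $C_3/f_m$ provides a uniform positive lower bound.

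The main obstacle is the combinatorial bookkeeping in selecting $C_1, C_2, C_3, C_4$: one has to match the parity of $n$ against the signs of $f_m$, $g_{m-1}$, $\tau_n$, and $\tau_{n+1} - \tau_n$ at the candidate endpoints so that a sign change of $\phi_K$ is actually forced. Once that choice is made the analytic step (IVT plus the routine translation between the $A$-interval and the trace bounds on $T$) follows with no further difficulty.
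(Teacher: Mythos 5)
Your overall IVT strategy in the trace variable is the same as the paper's, but two of the steps you wave through are exactly where the content of the proposition lives, and one of them is argued incorrectly.

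First, the choice of $C_1,\dots,C_4$ is not mere bookkeeping; it is the proof. In case (1) the paper takes $C_1=2-2\cos\frac{\pi}{2n+1}$ and $C_2=2-2\cos\frac{3\pi}{2n+1}$, i.e.\ the two values of $\tau$ at which $\tau_{n+1}=\tau_n$ with $\tau_n$ of \emph{opposite} signs, so that $\phi_K=(T-s-2)f_mg_{m-1}\tau_n$ at the endpoints changes sign. Your recipe ("force either $\tau_n$ or $\tau_{n+1}-\tau_n$ to vanish") covers this, but note that it breaks down for $n=1$, which your statement explicitly includes: there $\tau_1\equiv 1$ never changes sign and $\tau_2-\tau_1=\tau-1$ vanishes at only one admissible point, so no sign change can be produced this way; the paper handles $n=1$ separately by solving $\phi_K=(T-s-2)f_mg_m+1=0$ exactly and bounding $f_m/g_m$. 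In case (2) the "domination" at $A=C_4/f_m$ is not automatic either: both $\tau_{n+1}-\tau_n$ and $|Af_mg_{m-1}\tau_n|$ grow with $A$, and one needs the quantitative inputs $f_{m'-1}\in(0,1)$, $g_{m'-1}>\sqrt{1/(-r_{f_m})}>1$ (so that $C_4g_{m'-1}+1>\tau$), and $\tau_{n+1}/\tau_n>\frac{n+1}{n}$ for the $C_3$ endpoint. None of these appear in your outline.

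Second, your argument that $T>2$ on $(r_{f_m},0)$ is "automatic because $-s$ is small while $C_3/f_m$ is a uniform positive lower bound" does not work. The condition $T>2$ is $A>-s$, and $-s$ ranges up to $|r_{f_m}|$, whereas $C_3$ must be taken small (the endpoint condition forces $1+C_3m'<\frac{n+1}{n}$, so $C_3<\frac{1}{nm'}$); there is no a priori reason that $C_3/f_{m'-1}(s)>-s$ in the middle of the interval. The paper proves $T>2$ by a genuinely different mechanism: assuming $T=2$ first occurs at some $s_1$, it evaluates the longitude eigenvalue $B_s$ two ways — Proposition \ref{Bs2} forces $B_{s_1}=-1$, while the expression $B_s=(v_{1,1}^2+v_{2,1}^2/\sigma)t^{-2n}$ together with the sign analysis of $g_m-tg_{m-1}$ and $g_{m-1}-tg_m$ forces $B_s>0$ for $s\in(r_{f_m},s_1)$ and hence $B_{s_1}\ge 0$ by continuity, a contradiction. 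You would need to supply an argument of this kind (or some other global one); the local bound on $A$ alone does not yield $T>2$.
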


\begin{proof}

(1) Suppose $n>1$, we set $\tau_{n+1}=\tau_{n}$ as in \cite[Lemma 4.1]{HT3}. Then we can find two roots $z=e^{\frac{\pi}{2n+1}i}$ and $z=e^{\frac{3\pi}{2n+1}i}$, with
\begin{displaymath}
\tau_n(e^{\frac{\pi}{2n+1}i})=\tau_{n+1}(e^{\frac{\pi}{2n+1}i})>0, \quad \tau_n(e^{\frac{3\pi}{2n+1}i})=\tau_{n+1}(e^{\frac{3\pi}{2n+1}i})<0.
\end{displaymath}
Let $C_1=2-2\cos(\frac{\pi}{2n+1})>0$, $C_2=2-2\cos(\frac{3\pi}{2n+1})>0$. Then
\begin{align*}
\phi_K(s,T=s+2-\frac{C_1}{f_m^2})=-\frac{C_1g_{m-1}}{f_m}\tau(e^{\frac{\pi}{2n+1}i})<0, \\
\phi_K(s,T=s+2-\frac{C_2}{f_m^2})=-\frac{C_2g_{m-1}}{f_m}\tau(e^{\frac{3\pi}{2n+1}i})>0.
\end{align*}
Since $\phi_K(s,T)$ is a polynomial function of $T$, it is continuous. So it has a root $\displaystyle \frac{-C_2}{f_m^2}<T-s-2<\frac{-C_1}{f_m^2}$ by Intermediate Value Theorem, with $C_1,C_2 \in (0,4)$. 

When $m>0$ and $n=1$, $\phi_K(s,t)=(\tau-1)+(t+1/t-s-2)f_mg_{m-1}$, where $\tau=(t+1/t-s-2)f_m^2+2$. So $\phi_K(s,t)=(T-s-2)f_mg_{m}+1$, which has the solution $\displaystyle T=s+2+\frac{-1}{f_mg_m}$. Since $\displaystyle \frac{f_m}{g_m}$ is continuous on $[0,s_0]$, then it must be bounded below by $C_1$ and above by $C_2$. And it is not hard to see that $0<C_1<C_2<1$, as $g_m>f_m \geq 1$ when $s\geq 0$. Therefore when $n=1$, we still have $\displaystyle \frac{-C_2}{f_m^2}<T-s-2<\frac{-C_1}{f_m^2}$. 

To see $0<T<2$ for $s\in (0, s_0)$, notice that by assumption $s_0$ is the smallest positive value for $s$ such that $T=2$.

(2) When $m<-2$, let $m'=-m$, then $f_m=f_{m'-1}$ and $g_{m-1}=-g_{m'-1}$. So the Riley polynomial becomes $\tau_{n+1}-\tau_{n}-(T-s-2)f_{m'-1}g_{m'-1}\tau_{n}=0$. 
Since $2\cos\left(\frac{\pi}{m'-1}\right)-2$, the largest root of $g_{m'-2}$, is smaller than $r_{f_m}$, then $g_{m'-2}$ is positive and increasing on $s\in (r_{f_m},0)$. As a result, $g_{m'-1}=f_{m'-1}+g_{m'-2}$ must also be positive and increasing on $(r_{f_m},0)$. Setting $s=r_{f_m}$ and $f_{m'-1}=0$ in Lemma \ref{relations} (1) and (3), we get $g_{m'-1}(0)=\sqrt{\frac{1}{-r_{f_m}}}$. So $g_{m'-1}\in(\sqrt{\frac{1}{-r_{f_m}}},m')$ and $f_{m'-1}\in (0,1)$, when $s\in (r_{f_m},0)$. Since $-1<r_{f_m}<0$, $\sqrt{\frac{1}{-r_{f_m}}}>1$, claim that we can choose $C_3, C_4>0$ such that $2+C_3f_{m'-1}<\tau<2+C_4f_{m'-1}$. 
In fact, we can choose $C_4>0$ large enough such that $1+C_4\sqrt{\frac{1}{-r_{f_m}}}>2+C_4>2+C_4f_{m'-1}$. When $\tau>2$, both $\tau_n$ and $\tau_{n+1}$ are positive. So 
\begin{align*}
\phi_K(s,\tau=2+C_4f_{m'-1})=\tau_{n+1}-\tau_{n}-C_4g_{m'-1}\tau_{n}&<\tau_{n+1}-(1+C_4\sqrt{\frac{1}{-r_{f_m}}})\tau_{n}\\
 &<\tau_{n+1}-\tau\tau_{n}=-\tau_{n-1}<0.
\end{align*}
When $m=-2$, $m'=2$. So $f_{m'-1}=s+1$, $g_{m'-1}=s+2$. Let $C_4\geq 1$, then
\begin{align*}
\phi_K(s,\tau=2+C_4f_{m'-1})=&\phi_K(s, \tau=2+C_4(s+1))=\tau_{n+1}-\tau_{n}-C_4(s+2)\tau_{n}\\
=&\tau_{n+1}-(2+C_4(s+1)+C_4-1)\tau_{n}\\
=&\tau_{n+1}-\tau\tau_{n}+(1-C_4)\tau_{n}\\
=&-\tau_{n-1}+(1-C_4)\tau_{n} \leq -\tau_{n-1}<0.
\end{align*}

For $\tau >2$, it is easy to verify that $\frac{\tau_{n+1}}{\tau_n}$ is an increasing function (for example using first derivative test), so $\frac{\tau_{n+1}}{\tau_n}>\frac{n+1}{n}$. So we can choose $C_3>0$ such that $1+C_3m'<\frac{n+1}{n}$ and it follows that
\begin{displaymath}
\phi_K(s,\tau=2+C_3f_{m'-1})=\tau_{n+1}-\tau_{n}-C_3g_{m'-1}\tau_{n}>\tau_{n+1}-(1+C_3m')\tau_{n}>0.
\end{displaymath}

Apparently $T>2$ when $s\rightarrow r_{f_m}+$. To see $T=t+1/t>2$ for all $s\in (r_{f_m}, 0)$, we will need to look at $B_s$. In the proof of Lemma \ref{Bs}, notice that $\displaystyle B_s=(v_{1,1}^2+ \frac{v_{2,1}^2}{\sigma}) t^{-2n}$. So $B_s$ is a polynomial in terms of $s, \frac{1}{s}, t, \frac{1}{t}$ and $\frac{1}{t-1}$. Since by Proposition \ref{Bs2}  $B_s=-1$ when $t$ approaches and takes the value $1$, then $t=1$ is a removable singularity of $B_s$.  Therefore $B_s$ is continuous for $s\in(r_{f_m}, 0)$. Now assume $T=2$ when $s$ equals some $s_1\in (r_{f_m}, 0)$ and choose $s_1$ such that $T>2$ for $s \in (r_{f_m}, s_1)$. In fact, we can assume $t>1$ for $s \in (r_{f_m}, s_1)$, because similar computations with $1/t>1$ instead of $0<t<1$ could be carried out. Then $g_m-tg_{m-1}=tg_{m'-1}-g_{m'-2}=(t-1)g_{m'-1}+g_{m'-1}-g_{m'-2}=(t-1)g_{m'-1}+f_{m'-1}>0$ for $s \in (r_{f_m}, s_1)$. Moreover, $g_{m-1}-tg_m=tg_{m'-2}-g_{m'-1}>(t-1)g_{m'-2}>0$. This implies $\displaystyle B_s=\frac{g_m-tg_{m-1}}{g_{m-1}-tg_m}t^{-2n}>0$ for $s \in (r_{f_m}, s_1)$, and it follows by continuity that $B_s\geq 0$ for $s \in (r_{f_m}, s_1]$, which contradicts $B_s(s=s_1)=-1$.

\end{proof}

\section{Slopes}
In this section, we will compute slopes of asymptotes of the graph of the root of the Riley polynomial under logarithmic scale.
\begin{lemma}
\begin{itemize}
\item[(1)]
$\displaystyle \lim_{s\rightarrow r_{f_m}+} t=\infty$ when $m< -1$ and $n>0$. \\
\end{itemize}
\end{lemma}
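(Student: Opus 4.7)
The plan is to extract this limit directly from the bounds on $T = t + 1/t$ furnished by Proposition \ref{riley_sol}(2), together with the fact that $r_{f_m}$ is a simple root of $f_m$. First I would set $m' = -m$, so that $f_m = f_{m'-1}$, and recall from the proof of Proposition \ref{riley_sol}(2) that on the interval $s \in (r_{f_m}, 0)$ the function $f_{m'-1}$ is strictly positive, while by definition $f_{m'-1}(r_{f_m}) = 0$. Thus $f_m(s) \to 0^+$ as $s \to r_{f_m}+$.

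Next I would invoke the bound
\[
\frac{C_3}{f_m} \;<\; T - s - 2 \;<\; \frac{C_4}{f_m}
\]
from Proposition \ref{riley_sol}(2), where $C_3, C_4 > 0$. Since $s$ stays bounded inside $(r_{f_m}, 0)$ and $C_3/f_m \to +\infty$, the lower bound forces $T \to +\infty$ as $s \to r_{f_m}+$.

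Finally I would translate this back to $t$. On this solution branch we have $T > 2$, hence $t$ is a positive real number with $t \neq 1$, and $T = t + 1/t$. The map $t \mapsto 1/t$ preserves the trace and leaves the representation unchanged up to conjugation, so following the paper's standing convention (used in Proposition \ref{riley_sol}(2)) I select the branch $t \geq 1$. On that branch $t + 1/t \to \infty$ is equivalent to $t \to \infty$, which is what we wanted.

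There is really no substantive obstacle: the lemma is essentially a corollary of Proposition \ref{riley_sol}(2) combined with the fact that $r_{f_m}$ is a simple root of $f_m$. The only point that requires any care is the choice between the two branches $t$ and $1/t$, which is settled by the same convention used when $B_s$ was analyzed in the proof of Proposition \ref{riley_sol}(2).
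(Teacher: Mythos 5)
Your proposal is correct and follows essentially the same route as the paper: the paper's proof likewise reads the bound $s+2+\frac{C_3}{f_m}<T<s+2+\frac{C_4}{f_m}$ off Proposition \ref{riley_sol}(2), concludes $T\to\infty$ as $s\to r_{f_m}+$, and then chooses the branch of $t$ (between $t$ and $1/t$) so that $t\to\infty$. Your extra remarks on $f_m\to 0^+$ and the branch selection just make explicit what the paper leaves implicit.
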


\begin{proof}
When $m<-1$ and $n>0$, by Proposition \ref{riley_sol} (2) we have $s+2+\frac{C_3}{f_m}<T<s+2+\frac{C_4}{f_m}$. So $\displaystyle \lim_{s\rightarrow r_{f_m}+} T=\infty$, and we can choose $t$ such that $\displaystyle \lim_{s\rightarrow r_{f_m}+} t=\infty$.
\end{proof}

\begin{lemma} \label{limit_B}
\ 
\begin{itemize}
\item[(1)] $\displaystyle \lim_{s\rightarrow r_{f_m}+} B_st^{2n}=1$ when $m< -1$.\\
\item[(2)] $\displaystyle \lim_{s\rightarrow 0+} B_s=1$ when $m\neq -1$.\\
\item[(3)] $\displaystyle \lim_{s\rightarrow s_0-} B_s=-1$ and $\displaystyle \lim_{s\rightarrow s_0-} \text{Ln}(B_s)=-\pi+2d\pi$ for some $d\in \mathbb{Z}$. \\
\end{itemize}
\end{lemma}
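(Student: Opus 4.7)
The plan is to reduce each of the three parts to a direct computation from the formula $B_s = \frac{g_m - tg_{m-1}}{g_{m-1} - tg_m}\,t^{-2n}$ of Proposition \ref{Bs2}, combined with the asymptotic information on $t$ extracted earlier.

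For part (1), first note that at $s = r_{f_m}$ we have $f_m = 0$, so Lemma \ref{relations}(1) yields $g_m(r_{f_m}) = g_{m-1}(r_{f_m})$; call this common value $a$. Lemma \ref{relations}(3) further gives $0 = r_{f_m} a^2 + 1$, so $a^2 = -1/r_{f_m} > 0$ and in particular $a \neq 0$. Rewriting
\[
B_s t^{2n} = \frac{g_{m-1} - g_m/t}{g_m - g_{m-1}/t},
\]
and using that $t \to \infty$ by the preceding lemma while $g_m, g_{m-1}$ stay bounded near $r_{f_m}$, both numerator and denominator tend to $a \neq 0$, giving the limit $1$.

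For part (2), at $s = 0$ one has $f_m(0) = 1$, $g_m(0) = m+1$, $g_{m-1}(0) = m$ (valid for $m \geq 0$ from the closed formula, and for $m \leq -2$ via the conventions $f_{-m} = f_{m-1}$ and $g_{-m} = -g_{m-2}$). The relation $(T-s-2)f_m^2 = \tau-2$ gives $\tau = T$ at $s=0$, so $z + 1/z = t + 1/t$, and one may parametrize $t = e^{i\theta}$ since Proposition \ref{riley_sol}(1) places $T$ in $(-2,2)$ for small positive $s$. Using $\tau_k = \sin(k\theta)/\sin\theta$ and standard product-to-sum identities, the Riley polynomial at $s = 0$ reduces to
\[
(m+1)\cos((2n+1)\theta/2) = m\cos((2n-1)\theta/2).
\]
On the other hand, the identity $B_s|_{s=0}=1$ is equivalent to $(m+1)(1 + t^{2n+1}) = mt(1 + t^{2n-1})$, which after the substitution $t = e^{i\theta}$ and factoring $e^{i(2n+1)\theta/2}$ collapses to the very same trigonometric identity. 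Hence along the Riley branch, $B_s \to 1$ as $s \to 0^+$.

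Part (3) is the easiest: at $s = s_0$ we have $T = 2$, forcing $t = 1$. Direct substitution into the formula for $B_s$ gives $(g_m - g_{m-1})/(g_{m-1} - g_m) = -1$, the cancellation being legitimate because $g_m(s_0) - g_{m-1}(s_0) = f_m(s_0) \neq 0$ (since $s_0 > 0$ while all roots of $f_m$ are negative). Since $-1 = e^{i(-\pi + 2d\pi)}$ for every $d \in \mathbb{Z}$, it follows that $\text{Log}(B_s)$, taken as the continuous branch of the logarithm along the Riley path, tends to a value of the form $-\pi + 2d\pi$, with the specific $d$ determined by the path. I expect the main technical work to lie in part (2): rigorously justifying that the selected Riley branch of $t$ approaches a unimodular limit (so that the parametrization $t = e^{i\theta}$ is valid) and handling possible degenerate subcases (e.g.\ $n=1$, or $\sin(\theta/2) = 0$) separately.
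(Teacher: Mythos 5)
Your proposal is correct and follows essentially the same route as the paper: all three limits are read off from the formula $B_s=\frac{g_m-tg_{m-1}}{g_{m-1}-tg_m}\,t^{-2n}$, using $f_m=0$ (hence $g_m=g_{m-1}$) at $r_{f_m}$, the $s=0$ Riley equation for part (2), and $t=1$ at $s_0$; your extra checks that $g_m(r_{f_m})\neq 0$ (via Lemma \ref{relations}(3)) and $f_m(s_0)\neq 0$ are worthwhile justifications the paper omits. One correction: the ``main technical work'' you anticipate in part (2) --- proving that the Riley branch of $t$ approaches a unimodular limit so that the parametrization $t=e^{i\theta}$ is legitimate --- is unnecessary, because the verification is purely algebraic: the $s=0$ Riley equation $t^{2n}\bigl[m-(m+1)t\bigr]=(m+1)-mt$ together with $g_m(0)=m+1$, $g_{m-1}(0)=m$ gives $B_s=\frac{(m+1)-mt}{m-(m+1)t}\,t^{-2n}=t^{2n}\cdot t^{-2n}=1$ for any limiting value of $t$ with $m-(m+1)t\neq 0$, which is exactly how the paper argues and which sidesteps both the trigonometric identity and the degenerate subcases ($n=1$, $\sin(\theta/2)=0$) you propose to treat separately.
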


\begin{proof}
(1) When $f_m=0$, from Lemma \ref{relations}(1) we know $g_m=g_{m-1}$. So 
\begin{displaymath}
\lim_{s\rightarrow r_{f_m}+}B_st^{2n}=\lim_{s\rightarrow r_{f_m}+}\frac{g_m-tg_{m-1}}{g_{m-1}-tg_m}=\lim_{s\rightarrow r_{f_m}+}\frac{g_m(1-t)}{g_m(1-t)}=1.
\end{displaymath}

(2) To prove the second limit, set $s=0$ in $\tau$, then $\tau=t+1/t$. When $m\neq -1$, set $s=0$ in $\phi_K(s,t)=0$, then we have $(\tau_{n+1}-\tau_{n})+m(t+1/t-2)\tau_n=0$. So $t^{n+1}-t^{-n-1}-t^{n}+t^{-n}+m(t+1/t-2)(t^{n}-t^{-n})=0$, which simplifies to $t^{2n}[m-(m+1)t]-[(m+1)-mt]=0$. So
\begin{displaymath}
\lim_{s\rightarrow 0+} B_s=\frac{m+1-mt}{m-(m+1)t}t^{-2n}=1.
\end{displaymath}

(3) When $s=s_0$, we have $t=1$ and 
\begin{displaymath}
\lim_{s\rightarrow s_0-} B_s=\frac{g_m-g_{m-1}}{g_{m-1}-g_m}=-1.
\end{displaymath}
So $\displaystyle \lim_{s\rightarrow s_0-} \text{Ln}(B_s)=-\pi+2d\pi$ for some integer $d$.
\end{proof}
It is hard in general to give a formula to compute $d$, but we know $|2d-1|\geq 1$, and we can choose proper framing such that $2d-1>0$. 

Let $A_s$ be the $(1,1)$-entry of $\rho_s(\mathcal{M})=\rho_s(x)$, which equals $\sqrt{t}$. We define $g:(,)\rightarrow \mathbb{R}$ to be $\displaystyle g(s)=-\frac{\ln (B_s)}{\ln(A_s)}$ and examine the image of $g$.

\begin{proposition}\label{slope}
For points corresponding to the root of the Riley polynomial, the image of $g$ contains every number in $I$, with 
\begin{displaymath}
I=
\begin{cases} 
(-\infty, 0)& \mbox{if } m\neq -1 \text{ and } n>1, \text{ or } m>0 \text{ and } n=1, \\
(0, 4n)& \mbox{if } m<-1 \text{ and } n>0, \\ 
\end{cases}
\end{displaymath}
\end{proposition}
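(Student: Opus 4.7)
The plan is to apply the intermediate value theorem to the function $g(s)$ on the appropriate $s$-interval in each case, using Lemma~\ref{limit_B} to evaluate endpoint limits. Here $A_s=\sqrt{t}$ is the $(1,1)$-entry of $\rho_s(\mathcal{M})=\rho_s(x)$, and $B_s$ is as in Proposition~\ref{Bs2}; both depend continuously on $s$ along the branch of Riley solutions isolated by Proposition~\ref{riley_sol}, so $g$ is continuous on the open interval of interest provided one fixes branches of the logarithms consistently.

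For Case 1 ($m\neq -1$ and $n>1$, or $m>0$ and $n=1$), Proposition~\ref{riley_sol}(1) gives $T\in(0,2)$ on $(0,s_0)$, so the relevant solution satisfies $|t|=1$. A direct computation from Proposition~\ref{Bs2} using $|t|=1$ shows $|B_s|=1$ as well (the numerator and denominator of $\frac{g_m-tg_{m-1}}{g_{m-1}-tg_m}$ are complex conjugates when $|t|=1$), hence both $\log A_s$ and $\text{Log}(B_s)$ are purely imaginary and $g$ is real-valued. As $s\to 0^+$, Lemma~\ref{limit_B}(2) gives $\text{Log}(B_s)\to 0$ while $\log A_s$ approaches a nonzero imaginary value (the limit of $t$ at $s=0$ is determined by $t^{2n}[m-(m+1)t]=(m+1)-mt$ and is generically $\neq 1$), whence $g(s)\to 0$. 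As $s\to s_0^-$, the definition of $s_0$ forces $t\to 1$ and $\log A_s\to 0$, while Lemma~\ref{limit_B}(3) gives $\text{Log}(B_s)\to (2d-1)\pi i\neq 0$; hence $|g(s)|\to \infty$. Identifying this infinite limit as $-\infty$ and applying IVT to the continuous map $g\colon(0,s_0)\to\mathbb{R}$ yields the range $(-\infty,0)$.

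For Case 2 ($m<-1$, $n>0$), on $(r_{f_m},0)$ one has $T>2$ so $t$ is real; taking $t>1$ WLOG as in the proof of Proposition~\ref{riley_sol}(2), that same proof showed $B_s>0$, so $g$ is real-valued and continuous. As $s\to r_{f_m}^+$, the earlier lemma gives $t\to\infty$ while Lemma~\ref{limit_B}(1) gives $B_st^{2n}\to 1$, so $\log B_s\sim -2n\log t$ and $\log A_s=\tfrac{1}{2}\log t\to\infty$, giving $g(s)\to 4n$. As $s\to 0^-$, Lemma~\ref{limit_B}(2) gives $\log B_s\to 0$ while $\log A_s$ tends to a nonzero real number, so $g(s)\to 0$. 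The intermediate value theorem then supplies every value in $(0,4n)$.

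The main obstacle is pinning down the sign of the infinite limit in Case~1 at $s\to s_0^-$: one must track both the chosen branch of $\text{Log}(B_s)$ and the direction from which $t$ approaches $1$ along the implicit function defined by $\phi_K(s,t)=0$. A local Taylor expansion of the Riley polynomial around $(s_0,1)$, combined with the constraint $B_s\to -1$ from Proposition~\ref{Bs2}, should determine both simultaneously; the numerical plots of Section~\ref{examples} provide an independent sanity check that the limit is indeed $-\infty$ rather than $+\infty$.
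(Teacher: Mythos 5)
Your proof follows essentially the same route as the paper: continuity of $g$ along the branch of Riley solutions, the endpoint limits supplied by Lemma~\ref{limit_B} (including the asymptotic $\log B_s \sim -2n\log t$ that yields the limit $4n$ at $s\to r_{f_m}^+$, and the vanishing of $\text{Log}(B_s)$ at $s\to 0$ that yields the limit $0$), followed by the intermediate value theorem. The one point you flag as unresolved --- fixing the sign of the infinite limit at $s\to s_0^-$ --- is treated no more rigorously in the paper, which simply notes $|2d-1|\geq 1$ and asserts that one can choose a proper framing so that $2d-1>0$; so your acknowledged gap coincides exactly with the paper's own informal step, and the rest of your argument matches.
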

\begin{proof}
\begin{displaymath}
\lim_{s\rightarrow s_0-} g(s)=-\lim_{s\rightarrow s_0-}\frac{\ln (B_s)}{\ln (A_s)}=-\frac{\ln(-1)}{\ln(1^+)}=-\frac{\pi}{0^+}=-\infty.
\end{displaymath}

Applying Lemma \ref{limit_B}, when $m\neq -1$ we have
\begin{displaymath}
\lim_{s\rightarrow 0+} g(s)=-\lim_{s\rightarrow 0+}\frac{\ln (B_s)}{\ln (A_s)}=-\lim_{s\rightarrow 0+}\frac{2\ln (B_s)}{\ln (t)}=0.
\end{displaymath}
When $m<-1$ and $n>0$, 
\begin{displaymath}
\lim_{s\rightarrow r_{f+}} g(s)=-\lim_{s\rightarrow r_{f+}}\frac{\ln (B_s)}{\ln (A_s)}=-\lim_{s\rightarrow r_{f+}}\frac{2\left(\ln (B_st^{2n})-2n\ln (t)\right)}{\ln (t)}=4n.
\end{displaymath}
Then the lemma follows from Proposition \ref{riley_sol}.
\end{proof}

\section{Root of the Alexander Polynomial}\label{sec_alex}
Setting $s=0$ in (\ref{tau}), then $\tau=T=t+1/t$ and $\phi_K(s=0,t)=\tau_{n+1}-\tau_n+m(\tau-2)\tau_n=0$, which simplifies to 
\begin{equation}\label{riley0}
\begin{split}
0&=\phi_K(0,t)=(m+1)(t^n+t^{-n})+(2m+1)\sum_{i=0}^{n-1}(t^i+t^{-i})(-1)^{n-i}.\\
\end{split}
\end{equation}

Next, we compute the Alexander polynomial $\Delta_{J(2m+1,2n)}(a)$ of $J(2m+1,2n)$ and show that it is the same as (\ref{riley0}). 

Let $\Delta_{T(p,q)}$ be the Alexander polynomial of the torus knot $T(p,q)$.
\begin{lemma} (see for example \cite[Example 9.15]{knots_BZ})
\begin{displaymath}
\Delta_{T(p,q)}(a)=\frac{(a^{pq}-1)(a-1)}{(a^p-1)(a^q-1)}\times a^{-\frac{(p-1)(q-1)}{2}}.
\end{displaymath}
\end{lemma}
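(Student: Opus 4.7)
The plan is to derive this classical formula by applying Fox calculus to the standard torus knot group presentation, and then normalize the result so that it satisfies the Alexander polynomial symmetry. This is the route taken in most textbook treatments (including the cited Burde--Zieschang reference), so the proof of the lemma can simply be a citation; below I sketch how one would verify it from scratch.

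First, I would use the well-known presentation $\pi_1(S^3 \setminus T(p,q)) = \langle x, y \mid x^p y^{-q} \rangle$ with $\gcd(p,q)=1$, where the abelianization to $H_1 = \langle a \rangle$ sends $x \mapsto a^q$ and $y \mapsto a^p$. Next, compute the Fox derivatives of the single relator $r = x^p y^{-q}$:
\[
\frac{\partial r}{\partial x} = 1 + x + \cdots + x^{p-1}, \qquad \frac{\partial r}{\partial y} = -x^p\bigl(y^{-1} + \cdots + y^{-q}\bigr).
\]
Applying the abelianization, both entries become geometric sums: they become $(a^{pq}-1)/(a^q-1)$ and (up to the unit $-1$) $(a^{pq}-1)/(a^p-1)$, respectively.

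The Alexander polynomial is, up to units in $\mathbb{Z}[a^{\pm 1}]$, the greatest common divisor of these two Alexander matrix entries. The key algebraic step is the cyclotomic identity
\[
\gcd\!\left(\frac{a^{pq}-1}{a^q-1},\ \frac{a^{pq}-1}{a^p-1}\right) = \frac{(a^{pq}-1)(a-1)}{(a^p-1)(a^q-1)},
\]
valid whenever $\gcd(p,q)=1$. I would prove this by factoring each $a^n - 1$ as $\prod_{d \mid n}\Phi_d(a)$ and checking that exactly the factors $\Phi_d$ with $d \mid pq$ but $d \nmid p$ and $d \nmid q$ survive in both quotients, hence in their GCD. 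Finally, multiplying by $a^{-(p-1)(q-1)/2}$ places the polynomial in the standard symmetric form $\Delta(a)=\Delta(a^{-1})$, using that $(p-1)(q-1)/2$ is the Seifert genus of $T(p,q)$.

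The only delicate step is the cyclotomic GCD identity; the rest is routine Fox calculus. An alternative route, which avoids this identity altogether, would be to use the fact that $S^3 \setminus T(p,q)$ fibers over $S^1$ with fiber a Milnor fiber of the Brieskorn singularity $z_1^p + z_2^q = 0$, and then recognize the Alexander polynomial as the characteristic polynomial of the monodromy action on $H_1$ of the fiber, which is well known to produce exactly the claimed expression. In either case the result is entirely classical and for the purposes of this paper one simply invokes the Burde--Zieschang reference.
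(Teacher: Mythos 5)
The paper offers no proof of this lemma at all: it is stated purely as a citation to Burde--Zieschang, exactly as you observe is sufficient. Your from-scratch sketch is nevertheless correct. The Fox derivatives of $x^p y^{-q}$ and the abelianization $x\mapsto a^q$, $y\mapsto a^p$ do produce the two entries $\frac{a^{pq}-1}{a^q-1}$ and (up to a unit) $\frac{a^{pq}-1}{a^p-1}$, and your cyclotomic identity checks out: writing $a^n-1=\prod_{d\mid n}\Phi_d(a)$, the exponent of $\Phi_d$ in $\frac{(a^{pq}-1)(a-1)}{(a^p-1)(a^q-1)}$ is $1$ precisely when $d\mid pq$, $d\nmid p$, $d\nmid q$ and $0$ otherwise (using $\gcd(p,q)=1$), which matches the common cyclotomic factors of the two entries. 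The one point worth flagging is that ``$\Delta$ is the gcd of the two $1\times 1$ minors'' is itself a theorem about elementary ideals of the relative Alexander module, not a definition; if you want to avoid both that and the gcd computation, the standard relation $\det A_j/(\phi(x_j)-1)\doteq \Delta(a)/(a-1)$ gives the formula directly from a single entry. The normalization by $a^{-(p-1)(q-1)/2}$ is correct since the unnormalized polynomial is palindromic of degree $(p-1)(q-1)$, which matches the symmetric convention the paper uses in Section \ref{sec_alex}.
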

The extra factor $a^{-\frac{(p-1)(q-1)}{2}}$ is multiplied to normalize $\Delta_{T(p,q)}(a)$ so that it is symmetric, i.e. $\Delta_{T(p,q)}(a^{-1})=\Delta_{T(p,q)}(a)$.

\begin{figure}[H]
\center
\includegraphics[width=130mm]{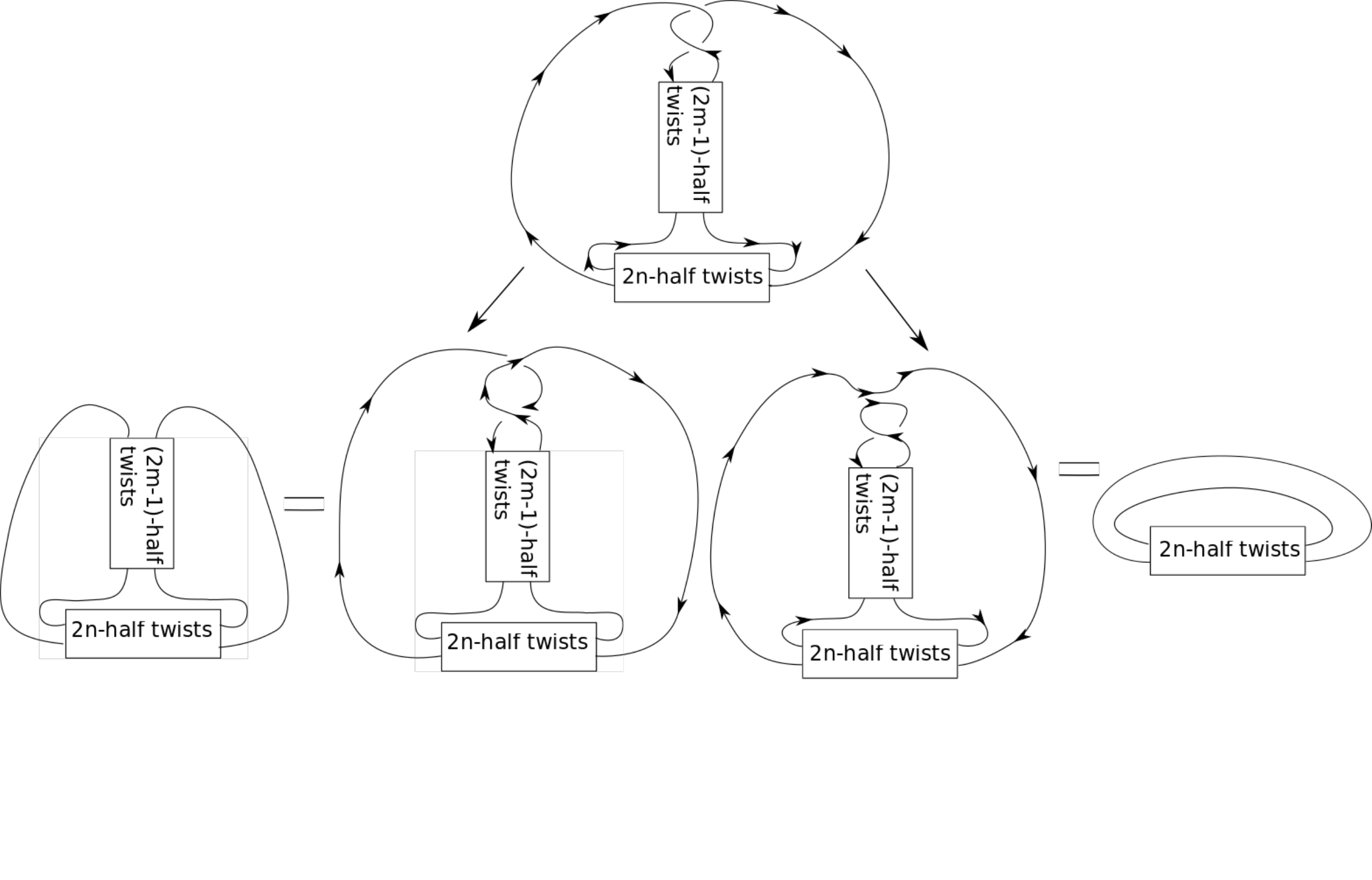}
\caption{The Skein relations}
\label{skein_diagram}
\end{figure}

\begin{proposition}
The Alexander polynomial of the double twist knot $J(2m+1,2n)$ is $\Delta_{J(2m+1,2n)}(a)=(m+1)(a^n+a^{-n})+(2m+1)\sum_{i=0}^{n-1}(a^i+a^{-i})(-1)^{n-i}$. 
\end{proposition}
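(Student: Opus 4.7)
The plan is to compute $\Delta_{J(2m+1,2n)}(a)$ from a Seifert surface and match the answer against the claimed closed form via a recursion in $n$.

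First I would apply Seifert's algorithm to the standard diagram of $J(2m+1,2n)$ in Figure \ref{diagram}; the resulting surface has genus $|n|$ when $(2m+1)n>0$ and genus $|n|-1$ when $(2m+1)n<0$, matching the remark made earlier in the paper. Choosing the natural $H_1$-basis of loops going around consecutive pairs of bands, the Seifert matrix $V$ has block-tridiagonal form, with the $2\times 2$ diagonal blocks depending only on $m$ (they record the twisting in the $(2m+1)$-region) and the off-diagonal blocks being constant matrices coming from the clasps where the two twist regions meet.

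Second I would expand $\det(aV-V^{t})$ along the first $2\times 2$ block. The tridiagonal structure produces a two-term recursion
\begin{equation*}
\Delta_{J(2m+1,2n)}(a)\;=\;P_m(a)\,\Delta_{J(2m+1,2n-2)}(a)\;-\;\Delta_{J(2m+1,2n-4)}(a),
\end{equation*}
for an explicit Laurent polynomial $P_m(a)$. I would then check that the proposed right-hand side $(m+1)(a^n+a^{-n})+(2m+1)\sum_{i=0}^{n-1}(a^i+a^{-i})(-1)^{n-i}$ satisfies the same recursion; using the geometric-series evaluation $\sum_{i=0}^{n-1}(-1)^{n-i}a^i=(-1)^n(1-(-a)^n)/(1+a)$, this reduces to a short algebraic identity. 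Two base cases $n=1,2$ (small Seifert matrices, computable by hand) close the induction.

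The main obstacle is bookkeeping: matching signs and half-integer powers of $a$ so that the alternating factor $(-1)^{n-i}$ in the closed form lines up with the signs produced by the determinant expansion. A considerably shorter alternative, which I would fall back on if the direct computation becomes unwieldy, is to invoke the classical identification of the Alexander polynomial of a two-bridge knot $K$ with the specialization $\phi_K(0,t)$ of the Riley polynomial, up to a unit in $\mathbb{Z}[t^{\pm 1}]$. Combined with equation (\ref{riley0}) already derived in the previous section, this gives the statement in one line, at the cost of citing rather than deriving this correspondence.
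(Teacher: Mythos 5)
Your main route is genuinely different from the paper's. The paper never writes down a Seifert matrix: it applies the skein relation at a crossing in the $(2m+1)$-twist region, telescopes over $m$ to get $\Delta_{J(2m+1,2n)}=(m+1)\Delta_{T(2,2n+1)}-m\Delta_{T(2,2n-1)}$, and substitutes the classical torus-knot formula --- a one-step reduction with no induction on $n$ and no determinant bookkeeping. Your plan inducts on $n$ instead and, as written, is an outline rather than a proof: the Seifert matrix is not exhibited, $P_m(a)$ is not computed, and the base cases are not done. The step that worries me most is the claim that the block-tridiagonal structure ``produces'' a scalar recursion $\Delta_{J(2m+1,2n)}=P_m(a)\,\Delta_{J(2m+1,2n-2)}-\Delta_{J(2m+1,2n-4)}$. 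Expanding a block-tridiagonal determinant along a $2\times 2$ block gives a first-order \emph{matrix} (transfer) recursion; that it collapses to a scalar second-order one with constant term $-1$ must be argued, not assumed. Worse, the verification step as described will fail for the displayed closed form: as a function of $n$ it is a linear combination of the three geometric modes $a^{n}$, $a^{-n}$ and $(-1)^{n}$ (the last arising from the $i=0$ term of the alternating sum), and three independent geometric sequences cannot all satisfy a single second-order recursion $x_n=Px_{n-1}-x_{n-2}$. This check actually exposes a slip in the statement itself: the paper's own computation yields the $i=0$ term once, i.e.\ constant term $(2m+1)(-1)^n$, whereas $\sum_{i=0}^{n-1}(a^i+a^{-i})(-1)^{n-i}$ counts it twice; with that correction the polynomial equals $\left(((m+1)a-m)a^{n}+((m+1)-ma)a^{-n}\right)/(a+1)$, which \emph{does} satisfy $x_n=(a+a^{-1})x_{n-1}-x_{n-2}$, so your induction can be salvaged --- but only after these points are pinned down.

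Your fallback --- citing the classical identification of $\phi_K(0,t)$ with $\Delta_K(t)$ up to a unit and reading off (\ref{riley0}) --- does prove the formula quickly once the unit is fixed by symmetry and the sign of the leading coefficient, but it inverts the logic of this section: the paper computes $\Delta_{J(2m+1,2n)}$ by an independent method precisely in order to \emph{verify} that it agrees with the $s=0$ specialization of the Riley polynomial. Taking that agreement as an input makes the section's concluding observation circular. It is an acceptable proof of the bare proposition, but if you use it you must cite the correspondence precisely and state how the normalization ambiguity is resolved.
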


\begin{proof}

We will use the fact that the Alexander polynomial satisfies the Skein relations (see for example \cite[Chapter 6]{adams}) and prove by induction.

Let $\Delta_{J(2m+1,2n)}(a)$ be the Alexander polynomial of the double twist knot $J(2m+1,2n)$. Normalize $\Delta_{J(2m+1,2n)}(a)$ so that $\Delta_{J(2m+1,2n)}(a)=\Delta_{J(2m+1,2n)}(a^{-1})$ and the coefficient of its highest degree term is positive. From the Skein relations as shown in Figure  \ref{skein_diagram}, we have
\begin{align*}
\Delta_{J(2m+1,2n)}-\Delta_{J(2m-1,2n)}&=(\sqrt{t}-1/\sqrt{t})\Delta_{J(0,2n)}, \\
\Delta_{J(2m-1,2n)}-\Delta_{J(2m-3,2n)}&=(\sqrt{t}-1/\sqrt{t})\Delta_{J(0,2n)}, \\
&\cdots \\
\Delta_{J(3,2n)}-\Delta_{J(1,2n)}&=(\sqrt{t}-1/\sqrt{t})\Delta_{J(0,2n)}.
\end{align*}
Adding the above equations all together, we have $\Delta_{J(2m+1,2n)}-\Delta_{J(1,2n)}=m(\sqrt{t}-1/\sqrt{t})\Delta_{J(0,2n)}$. 
Notice that $J(0,2n)$ is the torus link $T(2,2n)$, and $J(1,2n)$ is the torus knot $T(2,2n+1)$.  Let $\Delta_{T(p,q)}$ be the Alexander polynomial of the torus knot or link $T(p,q)$.  
Then $\Delta_{J(0,2n)}= \Delta_{T(2,2n)}$ and $\Delta_{J(1,2n)}= \Delta_{T(2,2n+1)}$. 
Applying the Skein relations of torus links, we have $(\sqrt{t}-1/\sqrt{t})\Delta_{J(0,2n)}=(\sqrt{t}-1/\sqrt{t})\Delta_{T(2,2n)}=\Delta_{T(2,2n+1)}-\Delta_{T(2,2n-1)}$. So $\Delta_{J(2m+1,2n)}-\Delta_{T(2,2n+1)}=\Delta_{J(2m+1,2n)}-\Delta_{J(1,2n)}=m(\Delta_{T(2,2n+1)}-\Delta_{T(2,2n-1)})$. Thus 
\begin{align*}
\Delta_{J(2m+1,2n)}
&=(m+1)\Delta_{T(2,2n+1)}-m\Delta_{T(2,2n-1)}\\
&=(m+1)\frac{(a^{2(2n+1)}-1)(a-1)}{(a^{2n+1}-1)(a^2-1)a^n}-m\frac{(a^{2(2n-1)}-1)(a-1)}{(a^{2n-1}-1)(a^2-1)a^{n-1}}\\
&=(m+1)\frac{(a^{2n+1}+1)}{(a+1)a^n}-m\frac{a^{2n-1}+1}{(a+1)a^{n-1}}\\
&=(m+1)\sum_{i=-n}^{n}(a^i)(-1)^{n+i} - m\sum_{i=-n+1}^{n-1}(a^i)(-1)^{n-1+i}\\
&=(m+1)(a^n+a^{-n})+(2m+1)\sum_{i=0}^{n-1}(a^i+a^{-i})(-1)^{n-i}.
\end{align*}

\end{proof}

We can see that the Alexander polynomial of $J(2m+1,2n)$ is exactly the same as its Riley polynomial when $s=0$ (as shown in (\ref{riley0})). So when $s=0$, $t$ takes the value of a root $\xi$ of the Alexander polynomial.

\begin{proposition}
\begin{itemize}
\item[(1)] When $m\neq -1$ and $n>1$ or $m>0$ and $n=1$, $\displaystyle \lim_{s\rightarrow 0+} t=\xi$ is a unit complex root of  the Alexander polynomial $\Delta_{J(2m+1,2n)}$.\\
\item[(2)] When $m< -1$ and $n>0$, $\displaystyle \lim_{s\rightarrow 0-} t=\xi$ is a positive real root of $\Delta_{J(2m+1,2n)}$.
\end{itemize}
\end{proposition}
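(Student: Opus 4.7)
The plan is to exploit the identification, established just above, of $\phi_K(0,t)$ with the Alexander polynomial $\Delta_{J(2m+1,2n)}(t)$. For each $s$ in the relevant interval we have already selected a continuous branch of solutions $t = t(s)$ of $\phi_K(s,t)=0$ via the Intermediate Value Theorem in Proposition \ref{riley_sol}; since the roots of a polynomial depend continuously on its coefficients, the limit $\xi = \lim t(s)$ must be a root of $\phi_K(0,t)$, hence of $\Delta_{J(2m+1,2n)}(t)$. The remaining work is to pin down where on the complex plane $\xi$ must live, which I would read off from the quadratic $t^2 - T t + 1 = 0$ relating $t$ to $T = t + 1/t$.

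For part (1), I would invoke Proposition \ref{riley_sol}(1), which gives not only $0 < T < 2$ on $(0,s_0)$ but the sharper bound $T - s - 2 < -C_1/f_m^2$ with $C_1 > 0$. Evaluating as $s \to 0^+$, where $f_m(0)=1$, forces $T(0) \leq 2 - C_1 < 2$, strictly away from the boundary; similarly $T(0) \geq 0 > -2$. Whenever $|T| < 2$ the quadratic $t^2 - Tt + 1 = 0$ has a conjugate pair of non-real roots of modulus one, so $|t(s)| = 1$ throughout the interval. Taking limits, $|\xi| = 1$, which is exactly the claim.

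For part (2), I would use Proposition \ref{riley_sol}(2), which gives $T > 2$ for all $s \in (r_{f_m}, 0)$. Then both roots of $t^2 - Tt + 1 = 0$ are positive reals (sum $T > 0$, product $1$), so the chosen branch $t(s)$ is real and positive on the whole interval, and the limit $\xi$ is a non-negative real. It remains to rule out $\xi = 0$ and $\xi = 1$: the former is impossible because $t \bar t = 1$, and the latter because the Alexander polynomial of a knot satisfies $\Delta(1) = \pm 1 \neq 0$. Hence $\xi$ is a positive real root of $\Delta_{J(2m+1,2n)}$.

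The main obstacle is the boundary analysis: making sure the continuous branch $t(s)$ built earlier does not push $T$ to the critical value $\pm 2$ in the limit, so that the dichotomy between non-real unit modulus and real positive is preserved under passage to the limit. In both cases this is handled by the explicit uniform gap from the boundary coming from Proposition \ref{riley_sol} (namely $T - 2 \leq -C_1/f_m(0)^2$ in (1) and $T - 2 \geq C_3/f_m(0)$ in (2)), together with the arithmetic property $\Delta(1) = \pm 1$ of the Alexander polynomial to exclude the degenerate endpoint $t=1$ in case (2).
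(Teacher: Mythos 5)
Your proof is correct, but it takes the route the paper only mentions in passing rather than the one it actually carries out. The paper's primary argument works directly with $\phi_K(0,t)=\Delta_{J(2m+1,2n)}(t)$ as a palindromic polynomial: for case (1) it invokes a criterion for palindromic polynomials of even degree (\cite[Theorem 1]{palindrome}, with the extra coefficient check $m+1>\tfrac{1}{2}(2m+1)$ when $n=1$) to produce a root on the unit circle, and for case (2) it exhibits a sign change of $h(t)=t^{2n}(m'-(m'-1)t)+m't-(m'-1)$ between $t=0$ and $t=1$ and uses the symmetry $t\mapsto 1/t$ to get a positive real root. You instead push the bounds of Proposition \ref{riley_sol} on $T=t+1/t$ to the limit $s\to 0$ and read off the location of $\xi$ from the quadratic $t^2-Tt+1=0$; this is precisely the alternative the paper compresses into its final sentence (``taking $s=0$'' in the inequalities for $T$). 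Your version has the advantage of tracking the chosen branch $t(s)$, so it actually identifies the limit $\xi$ as a root of the stated type, whereas the palindrome/IVT argument only establishes that such roots of $\Delta$ exist and leaves the identification with $\lim t(s)$ implicit; the paper's version, conversely, proves existence of these roots independently of the representation-theoretic setup. Two minor points: in case (2) the identity $t\bar{t}=1$ is not the right reason to exclude $\xi=0$ (for real $t$ the relevant fact is that the product of the two roots of $t^2-Tt+1$ is $1$, or simply that $T=\xi+1/\xi$ stays bounded), and $\xi\neq 1$ already follows from the uniform gap $T\geq 2+C_3>2$, so the appeal to $\Delta(1)=\pm 1$ is a valid but unnecessary extra ingredient.
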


\begin{proof}
As we see in (\ref{riley0}), setting $s=0$ in the Riley polynomial, we get 
\[ \phi_K(0,t)=(m+1)(t^n+t^{-n})+(2m+1)\sum_{i=0}^{n-1}(t^i+t^{-i})(-1)^{n-i}, \]
which is the same as the Alexander polynomial $\Delta_{J(2m+1,2n)}$. 
Notice that it is a palindrome (symmetric) polynomial of even degree $2n$ after multiplying by $t^n$.
So by \cite[Theorem 1]{palindrome}, it has a complex root on the unit circle when $m\neq -1$ and $n>1$. 
When $n=1$, $\phi_K(0,t)=(m+1)t+(2m+1)+(m+1)t^{-1}$. If $m>0$, then $m+1>\frac{1}{2}(2m+1)$. So again by \cite[Theorem 1]{palindrome}, $\phi_K(0,t)$ has a unit complex root.

When $m<-1$, there is a positive real root different from $1$. To see this, let $m'=-m>1$, then $\phi_K(0,t)=t^{2n}(m'-(m'-1)t)+m't-(m'-1)=0$. Set $h(t)=t^{2n}(m'-(m'-1)t)+m't-(m'-1)$. When $t=0$, $h(t)=1-m'<0$; when $t=1$, $h(t)=2>0$. So by Intermediate Value Theorem, $h(t)$ must have a real root between $0$ and $1$, and also a root$>1$ by symmetry of $h(t)$.

This proposition could also be proved by taking $s=0$ in $0<2+\frac{-C_1}{f_m^2}<T<2+\frac{-C_2}{f_m^2}<2$, and in $2<2+\frac{C_3}{f_m^2}<T<2+\frac{C_4}{f_m^2}$ when $m<-1$.
\end{proof}

\section{Left-Orderability}
Boyer, Rolfsen and Wiest proved the following theorem about left-orderability.
\begin{theorem}{\cite[Theorem 1.1]{BRW}}
Suppose that $M$ is a compact, connected and $P^2$-irreducible 3-manifold. A necessary and sufficient condition that $\pi_1(M)$ be left-orderable is that either $\pi_1(M)$ is trivial or there exists a non-trivial homomorphism from $\pi_1(M)$ to a left-orderable group.
\end{theorem}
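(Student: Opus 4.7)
The ``only if'' direction is immediate: if $\pi_1(M)$ is non-trivial and left-orderable, then the identity map $\pi_1(M) \to \pi_1(M)$ is itself a non-trivial homomorphism to a left-orderable group, so the stated condition holds.

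For the converse, let $\phi : \pi_1(M) \to G$ be a non-trivial homomorphism to a left-orderable group $G$, and set $K = \ker \phi$. The image $\phi(\pi_1(M)) \cong \pi_1(M)/K$ is a subgroup of $G$ and so is left-orderable, giving the short exact sequence
\[
1 \to K \to \pi_1(M) \to \pi_1(M)/K \to 1.
\]
Since extensions of left-orderable groups by left-orderable groups remain left-orderable---given a left-ordering on $K$ and one pulled back from $\pi_1(M)/K$, their lexicographic combination is a left-ordering on $\pi_1(M)$---the task reduces to producing a left-ordering on $K$.

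My plan for that is to realise $K$ as $\pi_1(\tilde{M})$ for the cover $\tilde{M} \to M$ corresponding to $K$, and to exploit the fact that $P^2$-irreducibility is inherited by covers. Replacing $\tilde{M}$ by its Scott compact core, one may treat $K$ as the fundamental group of a compact, $P^2$-irreducible 3-manifold $N$ and iterate the argument with $N$ in place of $M$. The main obstacle will be ensuring that this procedure terminates: at each stage one must produce a non-trivial homomorphism from the current piece to some left-orderable group. This is where one appeals to the structure theory of 3-manifold groups via geometrization together with the Burns--Hale theorem that every locally indicable group is left-orderable. Pieces with positive first Betti number, or with a hyperbolic or free geometric factor of the JSJ decomposition, admit the required homomorphism---typically to $\mathbb{Z}$ via abelianisation, or to the (left-orderable) fundamental group of a geometric piece---which feeds the iteration and closes the argument.
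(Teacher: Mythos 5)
The paper does not prove this statement at all; it is quoted verbatim from Boyer--Rolfsen--Wiest and used as a black box, so there is no in-paper argument to compare against. Judged on its own, your proposal has a genuine gap in the converse direction. You reduce to left-ordering $K=\ker\phi$ and propose to handle $K$ by passing to the cover with fundamental group $K$, taking a Scott compact core, and iterating. This fails for two reasons. First, $K$ need not be finitely generated, and Scott's core theorem requires a finitely generated fundamental group; even when the first kernel is tame, the kernels produced at later stages of the iteration generally are not, and you give no termination argument. Second, the claim that the iteration can always be fed --- that each successive piece has positive first Betti number or a hyperbolic/free JSJ factor mapping onto something left-orderable --- is not justified and cannot be in this generality: if it were, the argument would prove left-orderability of $\pi_1$ for essentially arbitrary aspherical $3$-manifolds, which is false (many hyperbolic rational homology spheres have non-left-orderable fundamental group). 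The hypothesis of a non-trivial homomorphism must enter in an essential, non-iterative way.

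The idea that closes the gap, and the one BRW actually use, is the full Burns--Hale criterion (not merely the corollary that locally indicable groups are left-orderable): a group $G$ is left-orderable if and only if every non-trivial finitely generated subgroup $H\leq G$ admits a non-trivial homomorphism to a left-orderable group. With this, no ordering of any kernel is ever needed. Given such an $H\leq\pi_1(M)$, either $H$ has infinite index, in which case the corresponding cover is a non-compact $P^2$-irreducible $3$-manifold whose Scott core is compact with non-empty non-spherical boundary, forcing $b_1>0$ and a surjection $H\to\mathbb{Z}$; or $H$ has finite index, in which case $\phi|_H$ is already non-trivial because $\ker\phi$ has infinite index (the image of $\phi$ is a non-trivial subgroup of a left-orderable, hence torsion-free, group and is therefore infinite). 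No geometrization or JSJ input is required. I recommend replacing the lexicographic-extension-plus-iteration scheme with this argument.
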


Consider the Lie group $SU(1, 1) = \left\{
\begin{pmatrix}
\alpha & \beta\\
\overline{\beta} & \overline{\alpha} \\
\end{pmatrix}
|\ |\alpha|^2 - |\beta|^2 = 1 \right\}$.
So we can parameterize $SU(1, 1)$ by $(\gamma, \omega)$ where $\gamma = -\overline{\beta}/\alpha\in \mathbb{C}$ and $\omega = \text{arg} \alpha$ is defined modulo $2\pi$. Then SL$_2\mathbb{R}\simeq \text{SU}(1,1)$ can be described as $\{(\gamma, \omega)\ |\ |\gamma|<1, -\pi \leq \omega<\pi\}$. 
The nonlinear Lie group $\widetilde{\text{PSL}_2\mathbb{R}}$ is defined to be the universal cover of SL$_2\mathbb{R}$ and $\text{PSL}_2\mathbb{R}$. In particular, it can be described as $\{(\gamma, \omega)\in \mathbb{C}\times \mathbb{R}\ |\ |\gamma|<1, -\infty<\omega<\infty\}$ with group operation given by:

\begin{equation*}
\begin{split}
&(\gamma, \omega)(\gamma', \omega')= \\ &\left((\gamma+\gamma'e^{-2i\omega})(1+\bar{\gamma}\gamma'e^{-2i\omega})^{-1}, \omega+\omega'+\frac{1}{2i}\ln{(1+\bar{\gamma}\gamma'e^{-2i\omega})(1+\gamma\bar{\gamma}'e^{2i\omega})^{-1}}
\right).
\end{split}
\end{equation*}
As a subgroup of Homeo$^+(\mathbb{R})$, $\widetilde{\text{PSL}_2\mathbb{R}}$ is left-orderable. We follow the notation in \cite{CD16}. Denote PSL$_2\mathbb{R}$ by $G$, and $\widetilde{\text{PSL}_2\mathbb{R}}$ by $\widetilde{G}$. 
We call an element $\widetilde{g}$ of $\widetilde{G}$ elliptic, parabolic or hyperbolic if it covers an element $g$ of the corresponding type in PSL$_2\mathbb{R}$. In particular, if $\widetilde{g}$ covers $\pm I$, then $\widetilde{g}$ is called central.

Suppose $M$ is a knot complement in a rational homology 3-sphere. Let $R_{\widetilde{G}}(M)=\text{Hom}(\pi_1(M), \widetilde{G})$ be the variety of $\widetilde{G}$ representations of $\pi_1(M)$. Similarly define $R_{\widetilde{G}}(\partial M)=\text{Hom}(\pi_1(\partial M), \widetilde{G})$. For a precise definition of the representation variety, see for example \cite{CS}.
We call a $\widetilde{G}$ representation $\widetilde{\rho}\in R_{\widetilde{G}}(\partial M)$ elliptic, parabolic, hyperbolic or central if $\widetilde{\rho}(\pi_1(\partial M))$ contains the corresponding elements.

So in the case of double twist knot $J(2m+1,2n)$, all the $\widetilde{G}$ representations corresponding to the root of the Riley polynomial described in Proposition \ref{riley_sol} (1) are elliptic, parabolic or central when restricted to the boundary.  All the $\widetilde{G}$ representations corresponding to the solution described in Proposition \ref{riley_sol} (2) are hyperbolic, parabolic or central when restricted to the boundary. 

\subsection{Translation Extension Locus}\cite[Section 4]{CD16}

The name translation extension locus comes from the fact that we need to use translation number in the definition.
For an elements $\widetilde{g}$ in $\widetilde{G}$, define the translation number of $\widetilde{g}$ to be
\begin{displaymath}
 \text{trans}(\widetilde{g})=\lim_{n\to\infty}\frac{\widetilde{g}^n(x)-x}{n} \text{ for some } x\in \mathbb{R}.
\end{displaymath}
Then trans: $R_{\widetilde{G}}(\partial M)\rightarrow H^1(\partial M; \mathbb{R})$ can be defined by taking $\widetilde{\rho}$ to trans$\circ \widetilde{\rho}$.

Let $M$ be a knot complement in a rational homology 3-sphere. To study $\widetilde{G}$ representations of $M$ whose restrictions to $\pi_1(\partial M)$ are elliptic, Culler and Dunfield gave the following definition of translation extension locus.

\begin{definition} \cite[Section 4]{CD16} 
Let $PE_{\widetilde{G}}(M)$ be the subset of representations in $R_{\widetilde{G}}(M)$ whose restriction to $\pi_1(\partial M)$ are either elliptic, parabolic, or central. Consider composition
\[
PE_{\widetilde{G}}(M)\subset R_{\widetilde{G}}(M)\stackrel{\iota^*}{\longrightarrow} R_{\widetilde{G}}(\partial M)\stackrel{\text{trans}}{\longrightarrow} H^1(\partial M; \mathbb{R}).
\]
The closure in $H^1(\partial M; \mathbb{R})$ of the image of $PE_{\widetilde{G}}(M)$ under $\text{trans}\circ\iota^*$ is called translation extension locus and denoted $EL_{\widetilde{G}}(M)$.
\end{definition}
In particular, $EL_{\widetilde{G}}(M)$ contains the $x$-axis, which corresponds to abelian $G$ representations of $\pi_1(M)$ that are elliptic, parabolic, or central when restricted to $\pi_1(\partial M)$. 

Let $D_{\infty}(M)$ be the infinite dihedral group $\mathbb{Z}\rtimes \mathbb{Z}/2\mathbb{Z}$. They showed that the translation extension locus $EL_{\widetilde{G}}(M)$ satisfies the following properties.

\begin{theorem}\cite[Theorem 4.3]{CD16}\label{trans_thm}
The extension locus $EL_{\widetilde{G}}(M)$ is a locally finite union of analytic arcs and isolated points. It is invariant under $D_{\infty}(M)$ with quotient homeomorphic to a finite graph. The quotient contains finitely many points which are ideal or parabolic in the sense defined above. The locus $EL_{\widetilde{G}}(M)$ contains the horizontal axis $L_{0}$, which comes from representations to $\widetilde{G}$ with abelian image.
\end{theorem}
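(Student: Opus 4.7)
The plan is to exploit the real algebraic structure of $R_G(M)$, pass to the covering $R_{\widetilde{G}}(M)$, and observe that the translation number is real analytic on the elliptic locus and continuous across parabolic representations, so that the image of $PE_{\widetilde{G}}(M)$ under $\text{trans}\circ \iota^*$ inside $H^1(\partial M;\mathbb{R})\cong\mathbb{R}^2$ is a locally finite union of real analytic arcs and isolated points.

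First I would establish the arc structure. The variety $R_G(\partial M)$ is real algebraic, and its elliptic, parabolic, and central loci are semi-algebraic subsets; on the elliptic locus a representation of $\pi_1(\partial M)=\mathbb{Z}^2$ is simultaneously conjugate into $SO(2)$, and for each lift to $\widetilde{G}$ the translation number is a real analytic function of the rotation angles. Since $R_G(M)$ is a real algebraic variety with finitely many irreducible components, its image in $R_G(\partial M)$ lies on a finite union of algebraic curves cut out by the $A$-polynomial (together with the abelian line). Pulling back to $R_{\widetilde{G}}(M)$ and composing with trans therefore yields a countable union of real analytic arcs meeting at isolated points, with parabolic representations contributing boundary points where the arc type can change.

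Next I would set up the $D_\infty(M)$ action and the finiteness of the quotient. Let $z\in\widetilde{G}$ generate the deck group of $\widetilde{G}\to G$; multiplying the image of a chosen generator of $\pi_1(\partial M)$ by $z^k$ produces a distinct lift of the same $G$-representation and shifts the corresponding translation coordinate by an integer, giving the $\mathbb{Z}$ factor, while conjugation in $\widetilde{G}$ by a lift of an orientation-reversing isometry of $\mathbb{H}^2$ negates all translation numbers, giving the $\mathbb{Z}/2$ factor. The image of $R_G(M)\to R_G(\partial M)$ meets any fundamental domain for this action in a compact real semi-algebraic set, which triangulates as a finite graph; the additional algebraic conditions for parabolicity and ideality cut out only finitely many points on this graph.

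Finally, the horizontal axis arises because any abelian $\rho\in R_G(M)$ factors through $H_1(M;\mathbb{Z})=\mathbb{Z}$, sending the null-homologous longitude to the identity, so the chosen lift to $\widetilde{G}$ has zero longitudinal translation; varying the meridian through elliptic elements then sweeps out all of $L_0$. The step I expect to be the main obstacle is uniform local finiteness near ideal points of the character curves, where translation numbers can a priori accumulate: I would appeal to the Culler--Shalen compactification of the character variety, using the finiteness of ideal points on each curve and the fact that translation number grows in a controlled way along approaches to them, to guarantee that only finitely many analytic arcs meet any compact region of $H^1(\partial M;\mathbb{R})$.
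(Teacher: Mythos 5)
This statement is not proved in the paper at all: it is quoted verbatim from Culler--Dunfield \cite[Theorem 4.3]{CD16}, so the only meaningful comparison is with their argument. Your outline follows the same broad strategy they do -- analytic structure of the representation variety, a dimension bound forcing the image in $H^1(\partial M;\mathbb{R})$ to be one-dimensional, the $D_\infty(M)$ action by integral translations (from changing the lift of a representation by a central cocycle, i.e.\ by the image of $H^1(M;\mathbb{Z})$ in $H^1(\partial M;\mathbb{Z})$) together with the negation involution, and the horizontal axis $L_0$ coming from abelian representations whose longitude, being null-homologous, has zero translation number. Those parts are essentially right.

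There are, however, two real problems at the crux. First, $\widetilde{G}$ is not an algebraic group, so $R_{\widetilde{G}}(M)$ is only a real \emph{analytic} variety; the finiteness properties you invoke (finitely many irreducible components, images lying on finitely many algebraic curves) do not transfer from the algebraic to the analytic category for free. In the analytic setting the statement ``image $=$ locally finite union of arcs and points'' requires the map to be \emph{proper}, so that one can apply the structure theory of subanalytic sets. Second, and relatedly, the step you yourself flag as the obstacle -- local finiteness -- is exactly where your proposed remedy does not work as stated: ``translation number grows in a controlled way near ideal points'' is not the mechanism, since at an ideal point where only interior traces blow up the boundary translation numbers can remain bounded. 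What Culler--Dunfield actually prove is a properness lemma: after passing to a cross-section of the conjugation action, the preimage under $\mathrm{trans}\circ\iota^*$ of a compact subset of $H^1(\partial M;\mathbb{R})$ intersected with $PE_{\widetilde{G}}(M)$ is compact, because elliptic/parabolic boundary holonomy with bounded translation numbers has bounded boundary traces and the characters cannot escape to infinity in the relevant region. Without that lemma your argument only yields a countable, not locally finite, union of arcs. A smaller gap: finiteness of the parabolic points in the quotient is not just ``an algebraic condition cutting out finitely many points,'' since a whole arc of representations can be boundary-parabolic; one must observe that translation numbers of parabolic elements of $\widetilde{G}$ are integers, so such an arc maps to a single point of $EL_{\widetilde{G}}(M)$.
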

In the notation of this paper, a $\widetilde{G}$ representation corresponds to point with coordinates $(\frac{1}{2\pi}Ln(t), \frac{1}{\pi}Ln(B_s))$ in $EL_{\widetilde{G}}(M)$. Once we build up the translation extension locus, we will use the following lemma to prove left-orderability.

\begin{lemma}\cite[Lemma 4.4]{CD16}\label{trans_lemma}
Suppose $M$ is a compact orientable irreducible $3$-manifold with $\partial M$ a torus, and assume the Dehn filling $M(r)$ is irreducible. If $L_r$ meets EL$_{\widetilde{G}}(M)$ at a nonzero point which is not parabolic or ideal, then $M(r)$ is orderable.
\end{lemma}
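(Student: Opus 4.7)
The plan is to produce a nontrivial homomorphism $\pi_1(M(r)) \to \widetilde{G}$ and then apply the Boyer-Rolfsen-Wiest theorem, since $\widetilde{G}$ is left-orderable as a subgroup of $\mathrm{Homeo}^+(\mathbb{R})$. A point of $EL_{\widetilde{G}}(M) \cap L_r$ encodes (possibly as a limit of) a representation $\widetilde{\rho}: \pi_1(M) \to \widetilde{G}$ whose restriction to $\pi_1(\partial M)$ is elliptic, parabolic, or central, and for which $\mathrm{trans}(\widetilde{\rho}(\alpha_r)) = 0$, where $\alpha_r$ is the slope-$r$ curve on $\partial M$ killed in $\pi_1(M(r))$. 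The hypothesis ``nonzero'' rules out the horizontal axis $L_0$ and hence rules out $\widetilde{\rho}$ being trivial on $\pi_1(\partial M)$; ``not parabolic or ideal'' guarantees we are at an honest point of the representation variety rather than at a degeneration covered only in the closure.

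Since $\widetilde{G} \subset \mathrm{Homeo}^+(\mathbb{R})$, any element with vanishing translation number must have a fixed point in $\mathbb{R}$; in particular $\widetilde{\rho}(\alpha_r)$ is either the identity or elliptic. In the first case, $\widetilde{\rho}$ descends through the quotient $\pi_1(M) \to \pi_1(M(r))$, and the ``nonzero'' hypothesis guarantees the induced map is nontrivial, so Boyer-Rolfsen-Wiest concludes. The substantive case is when $\widetilde{\rho}(\alpha_r)$ is genuinely elliptic. Here I would exploit the analytic-arc structure of $EL_{\widetilde{G}}(M)$ from Theorem \ref{trans_thm}: the non-parabolic, non-ideal assumption means the arc through the given point is smooth, so one can deform $\widetilde{\rho}$ inside $EL$ while keeping $\mathrm{trans}(\widetilde{\rho}(\alpha_r))=0$, and try to push the deformation until the rotation angle of $\widetilde{\rho}(\alpha_r)$ reaches $0 \pmod{2\pi}$, at which moment the representation descends to $\pi_1(M(r))$ nontrivially.

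The main obstacle is ensuring that this deformation actually reaches the trivial element rather than merely oscillating through nontrivial elliptic conjugacy classes. An alternative, possibly cleaner, route is a collapsing construction: identify each orbit of $\langle \widetilde{\rho}(\alpha_r) \rangle$ on $\mathbb{R}$ to a single point to obtain a new $\pi_1(M)$-action on the quotient line on which $\alpha_r$ acts trivially, then verify that the quotient is nondegenerate (not a single point) and that $\pi_1(M)$ continues to act in an order-preserving way, so the action factors through a nontrivial homomorphism $\pi_1(M(r)) \to \mathrm{Homeo}^+(\mathbb{R})$. The irreducibility of $M(r)$ enters at the last step to invoke Boyer-Rolfsen-Wiest. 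Whichever route one takes, the roles of the nonzero, non-parabolic, and non-ideal hypotheses are precisely to prevent the construction from collapsing to the trivial homomorphism, which would fail to certify left-orderability.
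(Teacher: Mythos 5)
The paper does not reprove this lemma; it is quoted verbatim from \cite[Lemma 4.4]{CD16}, so your proposal should be measured against the argument there. Your overall skeleton is the right one (a non-ideal point gives an honest representation $\widetilde{\rho}$ with $\mathrm{trans}(\widetilde{\rho}(\alpha_r))=0$; show $\widetilde{\rho}(\alpha_r)=1$ so that $\widetilde{\rho}$ descends to a nontrivial homomorphism $\pi_1(M(r))\to\widetilde{G}$; conclude by Boyer--Rolfsen--Wiest using irreducibility of $M(r)$). But there is a genuine error in your case analysis, and it is exactly backwards. You assert that an element of $\widetilde{G}$ with vanishing translation number, having a fixed point in $\mathbb{R}$, must be ``either the identity or elliptic.'' Elliptic elements are precisely the ones that act on $\partial\mathbb{H}^2=S^1$ \emph{without} fixed points; a nontrivial elliptic element of $\widetilde{G}$ therefore acts freely on $\mathbb{R}$ and has rotation number $\not\equiv 0 \pmod 1$, so \emph{every} lift of it has nonzero translation number. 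Likewise a nontrivial central element is a deck transformation $x\mapsto x+k$ with $k\neq 0$. The elements of $\widetilde{G}$ with translation number zero and a fixed point are the identity and the preferred lifts of parabolic and hyperbolic elements. Since the point lies in $EL_{\widetilde{G}}(M)$, the restriction of $\widetilde{\rho}$ to $\pi_1(\partial M)$ is elliptic, parabolic, or central, and the ``not parabolic'' hypothesis removes the parabolic option; hence $\widetilde{\rho}(\alpha_r)$ is elliptic or central with $\mathrm{trans}=0$, which forces $\widetilde{\rho}(\alpha_r)=1$ outright. The ``nonzero point'' hypothesis says some peripheral element has nonzero translation number, so $\widetilde{\rho}$ is nontrivial and the descended representation is nontrivial. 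That is the whole proof.

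Consequently your ``substantive case'' of a genuinely elliptic $\widetilde{\rho}(\alpha_r)$ with zero translation number is vacuous, and it is fortunate that it is, because neither of your proposed remedies would survive it: deforming along the analytic arc of $EL_{\widetilde{G}}(M)$ does not keep you on the line $L_r$ (the intersection is typically a single transverse point), and collapsing the orbits of $\langle\widetilde{\rho}(\alpha_r)\rangle$ for a fixed-point-free homeomorphism of $\mathbb{R}$ collapses the whole line to a point, since each orbit is unbounded in both directions. The missing ingredient in your write-up is thus the one structural fact about $\widetilde{G}$ that makes the lemma a one-liner: nontrivial elliptic and central elements of $\widetilde{G}$ never have translation number zero.
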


\subsection{Holonomy Extension Locus}
In \cite{gao2}, I constructed the holonomy extension locus which is an analog of the translation extension locus and has similar properties to translations extension locus. 

Let $M$ be the complement of a knot in a rational homology 3-sphere.

\begin{definition}\cite[Definition 3.3]{gao2},
Let $PH_{\widetilde{G}}(M)$ be the subset of representations whose restriction to $\pi_1(\partial M)$ are either hyperbolic, parabolic, or central. Consider the composition
\begin{displaymath}
PH_{\widetilde{G}}(M)\subset R^{\text{aug}}_{\widetilde{G}}(M)\stackrel{\iota^*}{\longrightarrow} R^{\text{aug}}_{\widetilde{G}}(\partial M)\stackrel{\text{EV}}{\longrightarrow} H^1(\partial M; \mathbb{R})\times H^1(\partial M; \mathbb{Z})
\end{displaymath}
The closure of $\text{EV}\circ \iota^* (PH_{\widetilde{G}}(M))$ in $H^1(\partial M; \mathbb{R})$ is called the holonomy extension locus and denoted $HL_{\widetilde{G}}(M)$.
\end{definition}

In particular, $HL_{\widetilde{G}}(M)$ contains the $x$-axis, which corresponds to abelian $G$ representations of $\pi_1(M)$ that are hyperbolic, parabolic, or central when restricted to $\pi_1(\partial M)$. 

The holonomy extension locus $HL_{\widetilde{G}}(M)$ satisfies the following properties.

\begin{theorem}\cite[Theorem 3.1]{gao2} 
The holonomy extension locus $HL_{\widetilde{G}}(M)=\bigsqcup_{i,j\in\mathbb{Z}}H_{i,j}(M)$, $-\frac{k_M}{k}\leq j\leq \frac{k_M}{k}$ is a locally finite union of analytic arcs and isolated points. It is invariant under the affine group $D_{\infty}(M)$ with quotient homeomorphic to a finite graph with finitely many points removed. Each component $H_{i, j}(M)$ contains at most one parabolic point and has finitely many ideal points locally.

The locus $H_{0,0}(M)$ contains the horizontal axis $L_0$, which comes from representations to $\widetilde{G}$ with abelian image.
\end{theorem}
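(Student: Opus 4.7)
The plan is to parallel the proof of Theorem \ref{trans_thm} of Culler--Dunfield, replacing the translation number machinery for elliptic boundary holonomies by an appropriate holonomy invariant for hyperbolic elements of $\widetilde{G}$. First I would give an analytic description of $R^{\text{aug}}_{\widetilde{G}}(M)$: fix a finite presentation of $\pi_1(M)$, regard a $\widetilde{G}$ representation as a lift of a $G$ representation together with integer data specifying the choice of lift on each generator and on the boundary loops, and check that the relations cut out a real analytic subvariety of $G^{\#\text{gens}}\times \mathbb{Z}^N$. The decomposition $HL_{\widetilde{G}}(M)=\bigsqcup H_{i,j}(M)$ is then the decomposition by the locally constant discrete lift data; the bound $|j|\leq k_M/k$ comes from the fact that the Euler class of the boundary torus representation is controlled by the cabling slope $k$ inside the rational homology sphere, so only finitely many lift classes can occur.

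Second I would show that on the open piece $PH_{\widetilde{G}}(M)$, where the boundary holonomies are hyperbolic, parabolic, or central, the composition $\text{EV}\circ\iota^*$ is real analytic because the hyperbolic translation length and the signed lift index depend real analytically on the matrix entries. Hence the image is locally a finite union of real analytic arcs together with isolated points where the Jacobian of $\text{EV}$ drops rank. Local finiteness of the arc union follows from the fact that only finitely many irreducible components of the $G$-character variety of $M$ contribute to a given bounded region of $H^1(\partial M;\mathbb{R})$, since the evaluation map is proper on the set of boundary-hyperbolic representations of bounded translation length. The statement $L_0\subset H_{0,0}(M)$ is an immediate computation: abelian $\widetilde{G}$ representations factor through $\mathbb{Z}=H_1(M)/\text{torsion}$ and their boundary holonomies trace out exactly the horizontal axis in the trivial lift class.

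Third, for the invariance under $D_\infty(M)$ and the finite-graph quotient, I would use that $D_\infty(M)=\mathbb{Z}\rtimes\mathbb{Z}/2$ acts on $H^1(\partial M;\mathbb{R})$ via the central covering transformations of $\widetilde{G}\to G$ together with the inversion $\widetilde{g}\mapsto \widetilde{g}^{-1}$, and that both operations lift to automorphisms of $R^{\text{aug}}_{\widetilde{G}}(M)$ preserving $PH_{\widetilde{G}}(M)$. Compactness of the quotient $HL_{\widetilde{G}}(M)/D_\infty(M)$ then reduces to compactness modulo the central action of a piece of the $G$-character variety of $M$, which is a projective variety minus finitely many points, so the quotient is a finite graph with finitely many points deleted (the ideal points).

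The hardest step, and the one where I expect most of the work to sit, is the last clause: each $H_{i,j}(M)$ contains at most one parabolic point and only finitely many ideal points locally. For the parabolic claim I would use that in the $G$-character variety the locus of reducible parabolic boundary characters is zero-dimensional and meets each irreducible component of the $A$-polynomial curve in at most one point in the relevant lift class. For the ideal points I would import the Culler--Shalen description of ideal points as valuations on the function field of the character variety, read off from the Newton polygon of the $A$-polynomial of $J(2m+1,2n)$, giving a priori only finitely many boundary slopes and hence only finitely many accumulation points of $H_{i,j}(M)$ in any compact region of $H^1(\partial M;\mathbb{R})$.
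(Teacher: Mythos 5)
The paper does not actually prove this statement: it is quoted verbatim from \cite[Theorem 3.1]{gao2} and used as a black box, so there is no in-paper proof to compare your argument against. Your sketch does follow the strategy that \cite{gao2} itself uses, namely transplanting the Culler--Dunfield argument for the translation extension locus to the boundary-hyperbolic setting via the evaluation map $\text{EV}$ on the augmented representation variety. At that level of generality the outline is reasonable.

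That said, several steps as written would not go through. First, a scope error: the theorem is about an arbitrary knot complement $M$ in a rational homology sphere, so you cannot justify the finiteness of ideal points by reading the Newton polygon of the $A$-polynomial of $J(2m+1,2n)$; you need the Culler--Shalen ideal-point/boundary-slope machinery for general $M$, applied componentwise to the augmented character variety. Second, the bound $-\frac{k_M}{k}\leq j\leq \frac{k_M}{k}$ is not an ``Euler class controlled by the cabling slope'' statement; it comes from a Milnor--Wood type inequality: some multiple $k\mathcal{L}$ of the longitude bounds a surface in $M$, and the translation number of the lifted boundary holonomy of a surface-group representation is bounded by the Euler characteristic (essentially the Thurston norm of the longitude class), which is what $k_M$ encodes. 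Third, the character variety is affine, not projective, so ``a projective variety minus finitely many points'' is not available; the finite-graph-with-punctures description of the quotient requires local finiteness together with invariance under the $\mathbb{Z}$-translation in $D_\infty(M)$, with the punctures being exactly the ideal points. Finally, ``at most one parabolic point per component $H_{i,j}(M)$'' does not follow from zero-dimensionality of the reducible parabolic locus; the cleaner reason is that for a representation in $H_{i,j}(M)$ whose boundary holonomy is parabolic or central the length coordinates of $\text{EV}$ vanish, so all such representations in a fixed component map to the single point determined by $(i,j)$, hence the component contains at most one parabolic point of the locus. These are the places where your proposal would need to be substantially reworked to match the actual argument in \cite{gao2}.
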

Under the notation of this paper, a $\widetilde{G}$ representation corresponds to a point which obtains coordinates $(\frac{1}{2}\ln(t), \ln (B_s), 
\text{trans}(\widetilde{\rho_s}(\mathcal{M})),  \text{trans}(\widetilde{\rho_s}(\mathcal{L})))$ in $HL_{\widetilde{G}}(M)$, or equivalently $(\frac{1}{2}\ln(t), \ln (B_s))\in H_{i,j}(M)$, with $i=\text{trans}(\widetilde{\rho_s}(\mathcal{M}))$ and $j= \text{trans}(\widetilde{\rho_s}(\mathcal{L}))$. We are using $\ln$ instead of Ln in the coordinates now because both $t$ and $B_s$ are real numbers under this setting. 
Once we build up the holonomy extension locus, we will use the following lemma to prove left-orderability.

\begin{lemma}\label{hol_lemma}
If $L_r$ intersects $H_{0,0}(M)$ component of $HL_{\widetilde{G}}(M)$ at non parabolic or ideal points, and assume $M(r)$ is irreducible, then $\pi_1(M(r))$ is left-orderable.
\end{lemma}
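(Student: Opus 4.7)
The plan is to mimic the proof of the analogous result \cite[Lemma 4.4]{CD16} (which is cited as Lemma \ref{trans_lemma} in the paper), replacing the role of elliptic boundary representations with hyperbolic ones and the role of translation numbers with the augmented data packaged into $HL_{\widetilde{G}}(M)$. The target is an application of Boyer--Rolfsen--Wiest: produce a non-trivial homomorphism $\pi_1(M(r))\to\widetilde{G}$ and use the fact that $\widetilde{G}$ is left-orderable together with $P^2$-irreducibility of $M(r)$ (which follows from the orientable irreducibility assumption, as $M(r)$ contains no $P^2$).

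First I would extract a representation from the given intersection point. Since the point lies on $H_{0,0}(M)$ and is neither ideal nor parabolic, it arises as $\mathrm{EV}\circ\iota^{*}(\widetilde{\rho})$ for some honest augmented representation $\widetilde{\rho}:\pi_1(M)\to\widetilde{G}$ whose boundary restriction is hyperbolic, parabolic, or central. Under the paper's identification, the point has coordinates $\bigl(\tfrac{1}{2}\ln t,\ln B_s\bigr)$ together with integer data $(i,j)=\bigl(\mathrm{trans}(\widetilde{\rho}(\mathcal{M})),\mathrm{trans}(\widetilde{\rho}(\mathcal{L}))\bigr)=(0,0)$.

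Next I would check that $\widetilde{\rho}$ factors through $\pi_1(M(r))$. Write $r=p/q$ so that the filling slope is represented by $\mathcal{M}^p\mathcal{L}^q$. The condition that the point lies on the line $L_r$ translates, in the hyperbolic setting, into the eigenvalue relation that forces $\widetilde{\rho}(\mathcal{M})^p\widetilde{\rho}(\mathcal{L})^q$ to cover the identity of $\mathrm{PSL}_2\mathbb{R}$; the fact that we are on the component $H_{0,0}(M)$ kills the central ambiguity, so $\widetilde{\rho}(\mathcal{M}^p\mathcal{L}^q)$ is the identity in $\widetilde{G}$. Since $\pi_1(\partial M)\cong\mathbb{Z}^2$ is abelian and its image in $\widetilde{G}$ lies in the abelian subgroup generated by a hyperbolic one-parameter subgroup (or a parabolic/central one), the slope element being trivial in the image gives a well-defined descent $\widetilde{\rho}:\pi_1(M(r))\to\widetilde{G}$.

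Finally I would verify non-triviality. Because the intersection point is not on the abelian horizontal axis $L_0$ inside $H_{0,0}(M)$ (otherwise $L_r$ and $L_0$ meet only at the origin, which is a trivial representation and not a genuine intersection of distinct analytic arcs), the representation $\widetilde{\rho}$ has non-abelian and in particular non-trivial image. Applying the Boyer--Rolfsen--Wiest theorem stated above to the $P^2$-irreducible manifold $M(r)$ then yields that $\pi_1(M(r))$ is left-orderable. The main technical obstacle is the second step: justifying that membership in $H_{0,0}$ together with being on $L_r$ really does force $\widetilde{\rho}(\mathcal{M}^p\mathcal{L}^q)=1$ in $\widetilde{G}$, rather than merely in $\mathrm{PSL}_2\mathbb{R}$; this is exactly the reason the holonomy extension locus is decomposed by the pair $(i,j)$ of integer translation data in \cite{gao2}, and we are using $H_{0,0}$ precisely so that both the real logarithmic coordinates and the integer lift data vanish simultaneously on the slope.
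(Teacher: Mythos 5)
The paper never actually proves this lemma: it is stated as the holonomy analogue of \cite[Lemma 4.4]{CD16} (Lemma \ref{trans_lemma}) with the proof implicitly deferred to \cite{gao2}, so there is no in-paper argument to compare against; judged on its own, your proposal is essentially correct and is the intended argument. You correctly identify the crux: commuting hyperbolic boundary holonomies are simultaneously diagonalizable, so lying on $L_{p/q}$ forces $\rho(\mathcal{M}^p\mathcal{L}^q)=I$ in $\mathrm{PSL}_2\mathbb{R}$, and then $\widetilde{\rho}(\mathcal{M}^p\mathcal{L}^q)$ is central with translation number $p\cdot 0+q\cdot 0=0$ (translation number restricts to a homomorphism on the abelian group $\widetilde{\rho}(\pi_1(\partial M))$), hence is trivial in $\widetilde{G}$; this is exactly what restricting to the component $H_{0,0}(M)$ buys. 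Two points to tighten. First, the lemma as printed omits the word \emph{nonzero}, which appears in the CD16 version and which your argument genuinely needs: $L_r$ always meets $H_{0,0}(M)$ at the origin, since $H_{0,0}(M)$ contains the axis $L_0$, and the origin yields only a boundary-central (possibly trivial or abelian) representation; so the hypothesis must be a nonzero, non-parabolic, non-ideal intersection point, and you should state that you are assuming it rather than folding it into a parenthetical. Second, your non-triviality step should be justified more directly: for $r\neq 0$ a nonzero point of $L_r$ lies off $L_0$, so $\widetilde{\rho}(\mathcal{L})\neq 1$; since $\mathcal{L}$ is torsion in $H_1(M)$ and $\widetilde{G}$ is torsion-free, every abelian representation kills $\mathcal{L}$, whence $\widetilde{\rho}$ is non-abelian and in particular nontrivial, and Boyer--Rolfsen--Wiest then applies to the $P^2$-irreducible manifold $M(r)$.
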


\begin{remark}
A point in $EL_{\widetilde{G}}(M)$ (or $HL_{\widetilde{G}}(M)$) is called an ideal point if it does not come from an actual $\widetilde{G}$ representation of $\pi_1(M)$ but only lives in the closure. But in this paper, since we build translation extension locus/holonomy extension locus from the root of the Riley polynomial, every pair  $(s,t)$ corresponds to a $\widetilde{G}$ representation of $\pi_1(S^3-J(2m+1,2n))$. So we do not encounter ideal points.
\end{remark}

\subsection{Proof of the Main Theorem}

Now we can prove our main theorem.
\begin{customthm}{\ref{main}}
For any rational $r\in I$, the Dehn filling of $J(2m+1,2n)$ of slope $r$ is orderable, where
\begin{displaymath}
I= 
\begin{cases} 
(-\infty, 1)& \mbox{if } m> 0 \text{ and } n>0, \text{ or } m<-1 \text{ and } n>1,\\
(-1, \infty)& \mbox{if } m< -1 \text{ and } n<0,\text{ or } m>0 \text{ and } n<-1,\\
[0, 4n)& \mbox{if } m<-1 \text{ and } n>0, \\ 
(4n, 0]& \mbox{if } m>0 \text{ and } n<0.
\end{cases}
\end{displaymath}
\end{customthm}

\begin{proof}
First of all, notice that Dehn filling of $J(2m+1,2n)$ of slope $r$ is irreducible as long as $n\neq 0$ and $m\neq 0,-1$ by \cite[Theorem 2 (a)]{HT_bridge}.
In particular, the $0$ Dehn filling of $J(2m+1,2n)$ is irreducible and has first betti number equal to $1$. Therefore by \cite[Corollary 3.4]{BRW}, $\pi_1(S^3-J(2m+1,2n)(0))$ is left-orderable.

\textbf{Case 1: $m>0$ and $n>0$ or $m<-1$ and $n>1$.} 

From Proposition \ref{riley_sol} (1) and Lemma \ref{slope}, we know there is an arc in $EL_{\widetilde{G}}(J(2m+1,2n))$ going from a point $(\frac{1}{2\pi}\ln\xi, 0)$ on the positive half of the $x$-axis to a point $(0,2d-1)$ (not included in the arc) on the positive half of the $y$-axis, where $\xi$ is a unit complex root of the Alexander polynomial. 
Here $d$ is defined by $\displaystyle \lim_{s\rightarrow s_0-} \text{Ln}(B_s)=-\pi+2d\pi$. 
By the discussion in \ref{slope}, we can choose proper framing such that $2d-1>0$. So by Lemma \ref{trans_lemma}, Dehn filling of rational slope $r\in (-\infty, 0]$ is orderable.  By symmetry of translation extension locus as described in Theorem \ref{trans_thm}, there is also an arc going from $(1-\frac{1}{2\pi}\ln\xi, 0)$ to $(1,-(2d-1))$ (excluded from the arc). Therefore Dehn filling of rational slope $r\in [0, 2d-1)$ is also orderable. So in total, we know the interval of left-orderable Dehn fillings should at least contain $(-\infty,1)$.

\textbf{Case 2: $m<-1$ and $n>0$.}

When $s=0$, the representation $\rho_s$ is reducible, so trans$(\widetilde{\rho_s}(\mathcal{L}))=0$. By continuity of translation number, all representations corresponding to the same continuous root of the Riley polynomial (i.e. for all $s\in (r_{f_m},0)$) should satisfy trans$(\widetilde{\rho_s}(\mathcal{L}))=0$.
From Proposition \ref{riley_sol} (2) and Lemma \ref{slope}, we know there is an arc in $H_{0,0}(J(2m+1,2n))$ going from a point $(\frac{1}{2}\ln\xi, 0)$ on the positive half of the $x$-axis to infinity with asymptote of slope $-4n$, where $\xi$ is a positive real root of the Alexander polynomial.
Applying Lemma \ref{hol_lemma}, then we see immediately that Dehn filling of rational slope $r\in [0, 4n)$ is orderable.

When $n<0$, all computations could be carried out similarly. 
\end{proof}

\section{Examples}\label{examples}


In this section, I will demonstrate the translation extension locus and holonomy extension locus of some double twist knots and some general two-bridge knots. All the figures are produced by the program PE \cite{pe} written by Marc Culler and Nathan Dunfield.
\subsection{Double Twist Knots}
Let us first look at some double twist knots.
Our first example (figure \ref{e94}) is $J(5,4)=9_4$, with $m=2$ and $n=2$.
\begin{figure}
\center
\includegraphics[width=100mm]{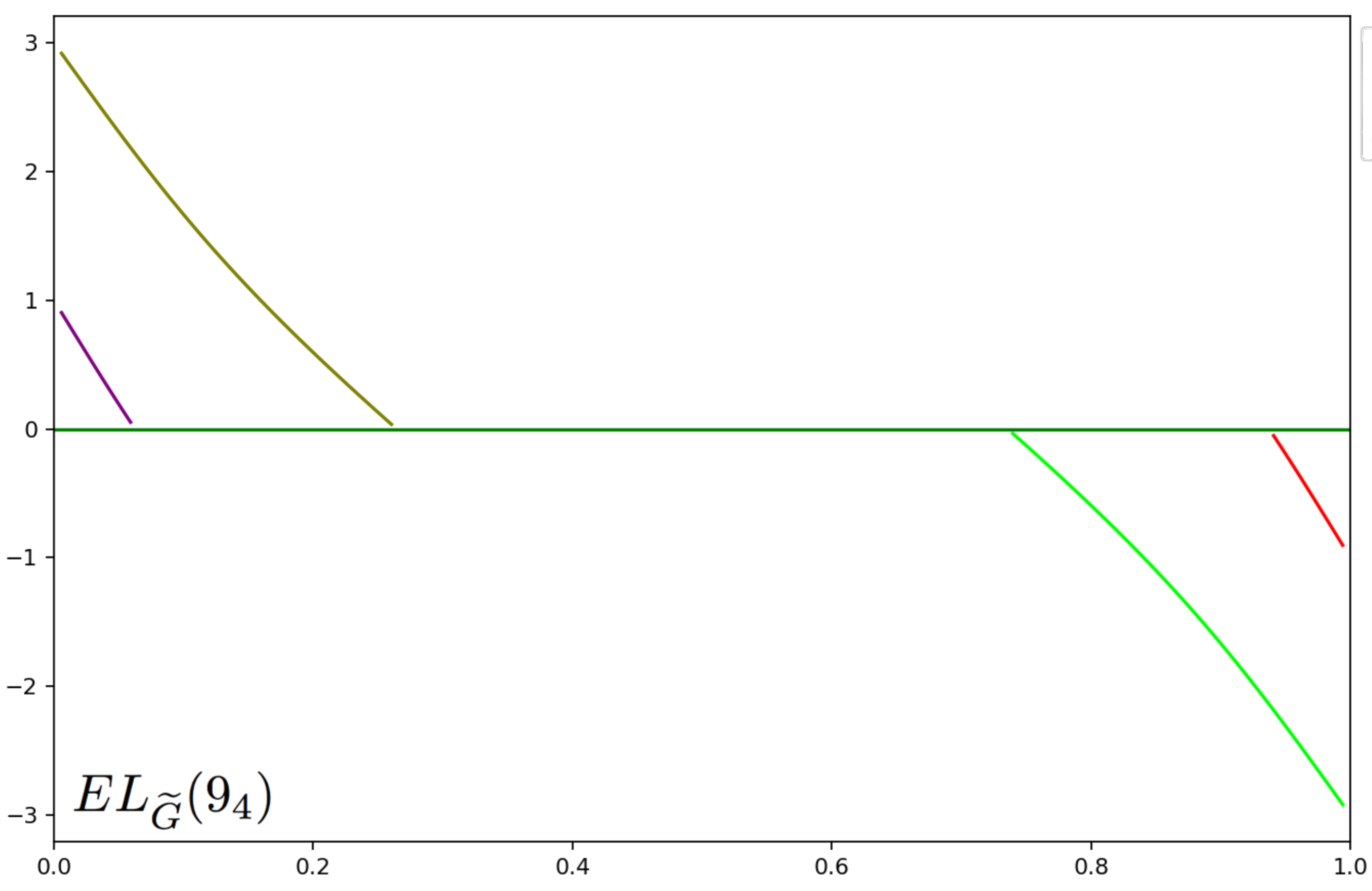}
\caption{Translation Extension Locus $EL_{\widetilde{G}}(9_4)$}
\label{e94}
\fnote{ This figure demonstrates the quotient of translation extension locus of $9_4$ under the action of integral translation in the $x$ direction. The $x$ coordinate of a point in $EL_{\widetilde{G}}(9_4)$ is $\frac{1}{2\pi}Ln(t)$, the $y$ coordinate is $\frac{1}{\pi}Ln(B_s)$. From this figure we know that Dehn filling of $J(5,4)$ of rational slope from $-\infty$ to $3$ is orderable. The number $3$ comes from the translation number of the parabolic element $\widetilde{\rho_{s_0}}(\mathcal{L})$ (or equivalently $\frac{1}{\pi}\text{Ln}(B_{s_0})$). It is in general difficult to compute this number. But we know it has to be at least $1$.}
\end{figure}
Our second example (figure \ref{h62}) is $J(-3,4)=6_2$, with $m=-2$ and $n=2$.
\begin{figure}[H]
\center
\includegraphics[width=100mm]{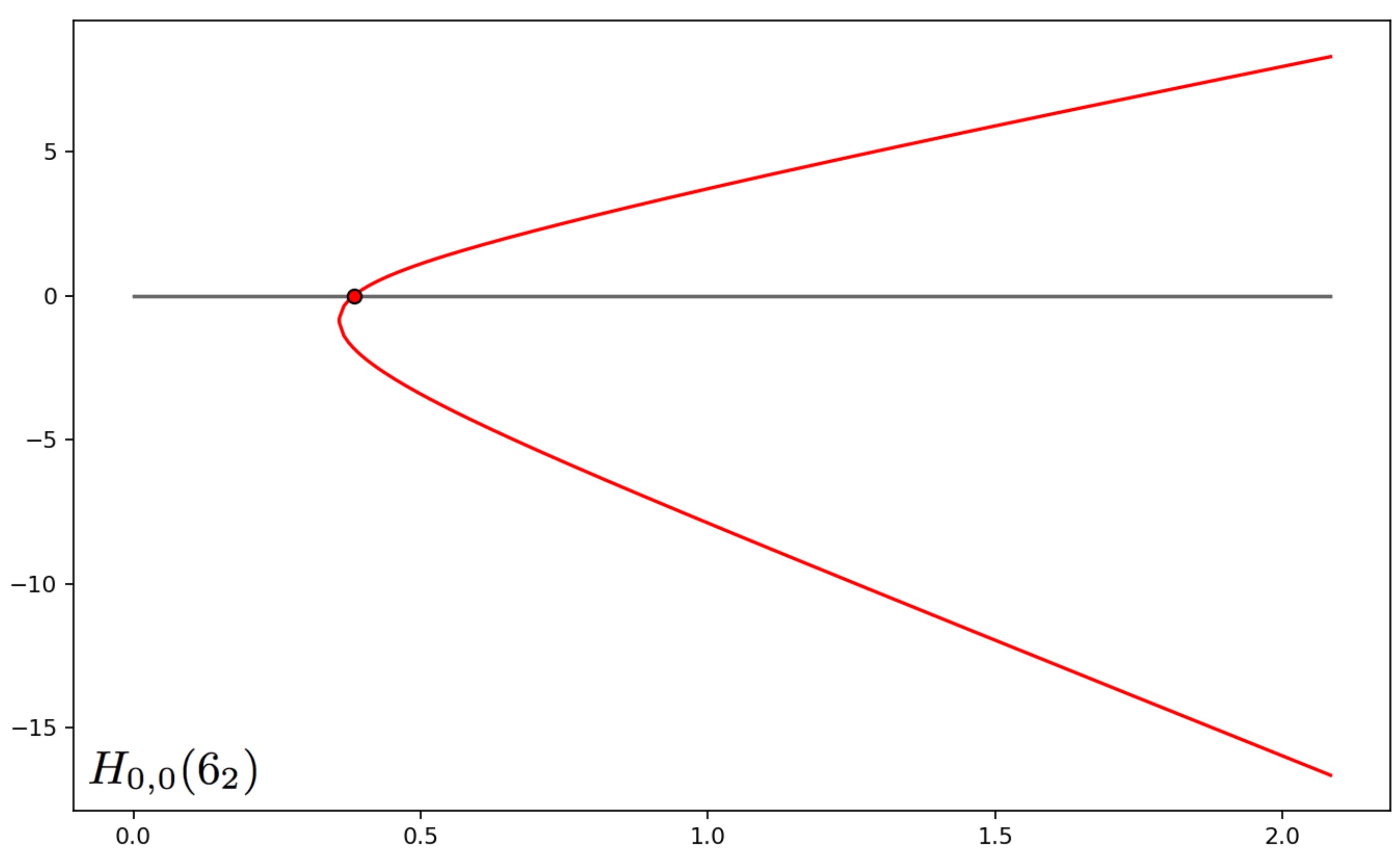}
\caption{Holonomy Extension Locus $HL_{\widetilde{G}}(6_2)$. }
\label{h62}
\fnote{This figure is the quotient of the $H_{0,0}(6_2)$ component of the holonomy extension locus of $6_2$ under the $\mathbb{Z}/2\mathbb{Z}$ action of reflection about the origin. The $x$ coordinate of a point in $H_{0,0}(6_2)$ is $\frac{1}{2}\ln(t)$, the $y$ coordinate is $\ln(B_s)$. The asymptotes have slope $-8$ and $4$. So we know Dehn filling of $J(-3,4)$ of rational slope $r\in(-8, 4)$ is orderable. Unfortunately, our Theorem \ref{main} could only predict the slope $-8=-4n$, but not the other slope $4$. }
\end{figure}

\subsection{Two-Bridge Knots}
Now let us look at more general examples of two-bridge knots.

\begin{figure}[H]
\center
\includegraphics[width=95mm]{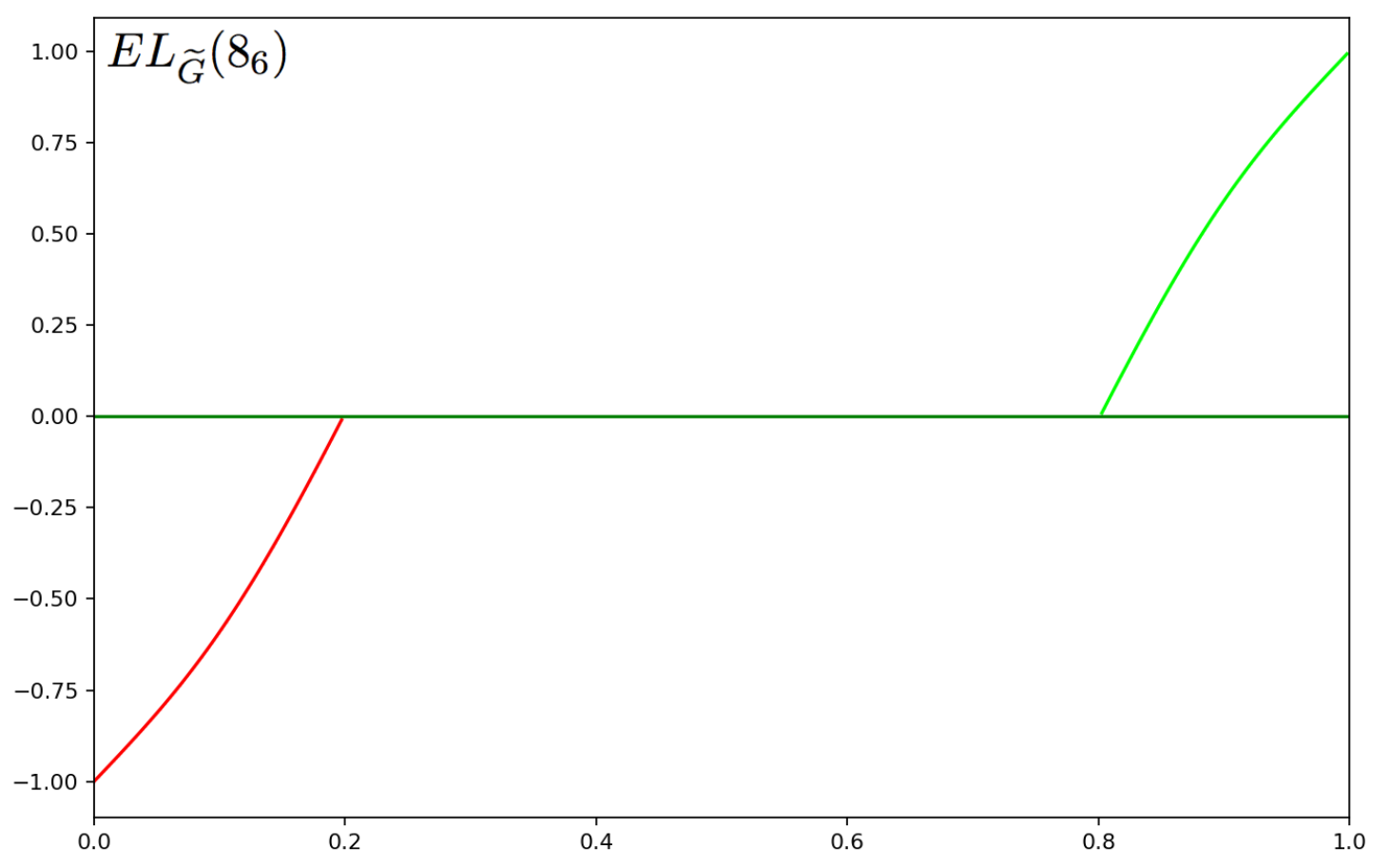}
\caption{Translation Extension Locus $EL_{\widetilde{G}}(8_6)$. }
\label{8_6_E}
\end{figure}

\begin{figure}[H]
\center
\includegraphics[width=93mm]{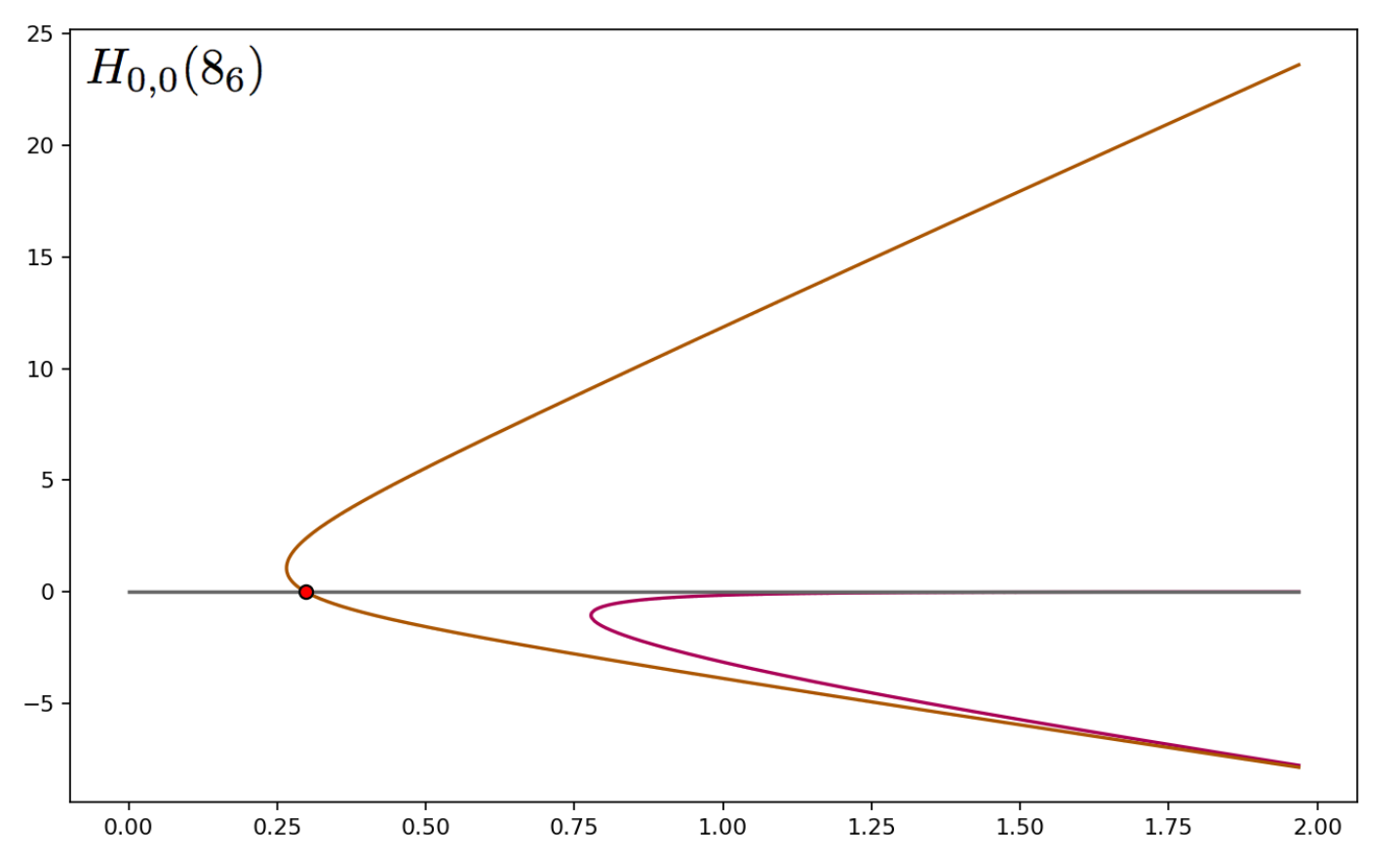}
\caption{Holonomy Extension Locus $HL_{\widetilde{G}}(8_6)$. }
\label{8_6_H}
\fnote{Figure \ref{8_6_E} is the quotient of translation extension locus of $8_6$ under the $\mathbb{Z}/2\mathbb{Z}$ action of integral translation in the $x$ direction.
Figure \ref{8_6_H} is the quotient of the $H_{0,0}(8_6)$ component of the holonomy extension locus of $8_6$ under the action of reflection about the origin. \\
The two-bridge knot $8_6$ is not a double twist knot. Its Alexander polynomial is $2a^4 - 6a^3 + 7a^2 - 6a + 2$, which has a pair (reciprocal to each other) of positive real roots and a pair of unit complex roots.}
\end{figure}

We can see from the above example that the holonomy extension locus and translation extension locus of $8_6$ have similar patterns as those of double twists knots shown in Figure \ref{e94} and Figure \ref{h62}.

When the Alexander polynomial of a two-bridge knot has no positive real roots or unit complex roots, in most cases we are still able to find intervals of left-orderable surgery slopes by constructing $\widetilde{G}$ representations and studying the holonomy extension loci. However, in other cases, it is possible that we may not be able to obtain such intervals of left-orderable surgery slopes using $\widetilde{G}$ representations. The two-bridge knot $6_3$ is such an example. 

The translation extension locus $EL_{\widetilde{G}}(6_3)$ contains only abelian boundary elliptic $\widetilde{G}$ representations, so we look at the holonomy extension locus $HL_{\widetilde{G}}(6_3)$. Let $\widetilde{\rho}$ be a nonabelian boundary hyperbolic $\widetilde{G}$ representation of $\pi_1(6_3)$ in $HL_{\widetilde{G}}(6_3)$. Then the translation number of $\widetilde{\rho}(\mathcal{L})$ must be $\pm 1$. So we only see the $H_{0,\pm 1}(6_3)$ components in the holonomy extension locus of $6_3$, except abelian boundary hyperbolic $\widetilde{G}$ representations which constitute $H_{0,0}(6_3)$.
\begin{figure}[H]
\center
\includegraphics[width=100mm]{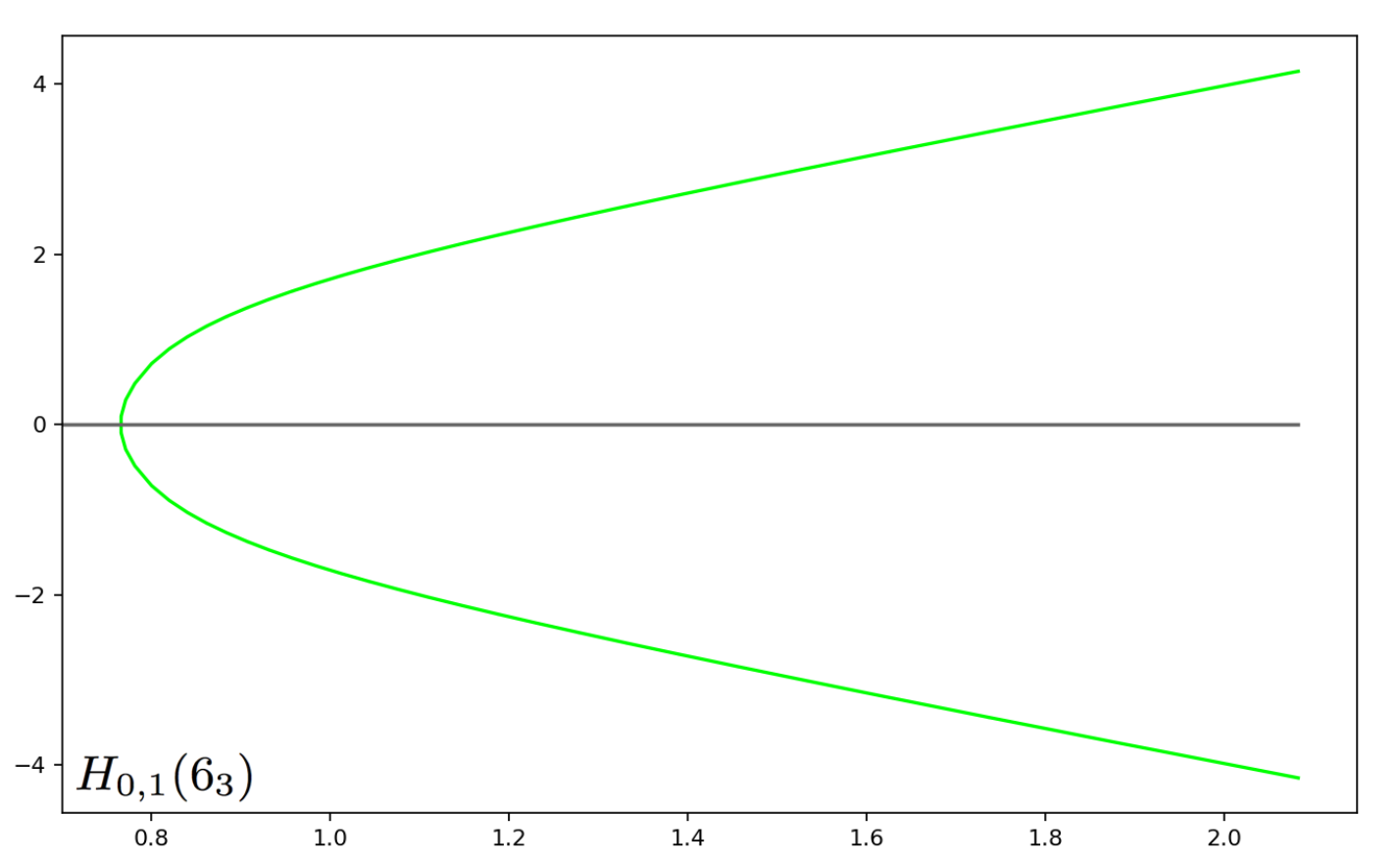}
\caption{Holonomy Extension Locus $HL_{\widetilde{G}}(6_3)$. }
\fnote{ This figure is the $H_{0,1}(6_3)$ component of the holonomy extension locus of $6_3$. (The component $H_{0,-1}(6_3)$ only differ from $H_{0,1}(6_3)$ by reflection about the origin. So we will not show it here.) There are two asymptotes of slope $-2$ and $2$. So for every rational $r\in(-2,2)$, $\pi_1(M(r))$ has a nonabelian $G$ representation. However, such a representation does not lift to $\widetilde{G}$ because the translation number of $\widetilde{\rho}(\mathcal{L})$ does not vanish, which means we cannot obtain an interval of left-orderable surgery slopes by constructing $\widetilde{G}$ representations. }
\end{figure}

\section{Conjecture}\label{conj}

\subsection{Improving the Results}
As observed in Figure \ref{h62}, the arc in the holonomy extension locus $H_{0,0}(6_2)$ has another asymptote of slope $4=-(4m+4)$, in addition to the slope $-8=-4n$ which is confirmed by Proposition \ref{slope}. This phenomenon is also observed in other examples of $J(2m+1,2n)$ with $m<-1$ and $n>0$. Actually, according to our computational data, the other slope should be $-(4m+4)$. Unfortunately the author does not know how prove this. 

But as we will see next, $-4n$ is the boundary slope of some incompressible surface in $S^3-J(2m+1,2n)$.
We follow the notation in \cite{HT_bridge}. 
For rational number $0<\frac{p}{q}<1$, there is a continued fraction expansion
\begin{displaymath}
\frac{p}{q}=[a_1,-a_2,a_3,-a_4,\dots,\pm a_k]=\cfrac{1}{a_1+\cfrac{1}{a_2 + \lastcfrac{1}{a_k} }}, \quad 
\begin{array}{c} 
|a_k|>1\end{array}.
\end{displaymath}
Consider the case when $m<-1$. The two-bridge slope of $J(2m+1,2n)$ is $\displaystyle \cfrac{1}{2m+1+\cfrac{1}{2n}}=[2m+1,-2n]$. Let $n^+$ and $n^-$ be the number of positive and negative numbers in $[2m+1,-2n]$, then $n^+=0$ and $n^-=2$.

Rewrite this slope in the unique continued fraction expansion with each number even, then 
\begin{displaymath}
\cfrac{1}{2m+1+\cfrac{1}{2n}}=\cfrac{1}{2m+2+\cfrac{1-2n}{2n}}=[2m+2,\underbrace{2,\ldots,2}_{2n-1}].
\end{displaymath}
Let $n_0^+$ and $n_0^-$ be the number of positive and negative numbers in $[2m+2,\underbrace{2,\ldots,2}_{2n-1}]$, then $n_0^+=2n-1$ and $n_0^-=1$.

So $m(2,\ldots,2)=2[(n^+-n^-)-(n_0^+-n_0^-)]=-4n$. By \cite[Proposition 2]{HT_bridge}, $-4n$ is the boundary slope of some incompressible surface in $S^3-J(2m+1,2n)$.

\subsection{General Case for Two-Bridge Knots}
In \cite[Lemma 3.6]{gao2}, the author noticed that an arc in the holonomy extension locus approaches the line through the origin of slope equal to the slope of the incompressible surface associated to an ideal point of the character variety. So Dunfield made the following conjecture. 
\begin{conjecture}
Let $K$ be a two-bridge knot. Suppose the Alexander polynomial of $K$ has a root $p_0$.
For any rational $r\in I$ as defined below, Dehn filling of $K$ of slope $r$ is orderable, with
\begin{displaymath}
I= 
\begin{cases} 
(-S_1, -S_2) & \mbox{if } p_0>0, \\ 
(-\infty, k)  & \mbox{if } p_0 \text{ is a unit complex number},
\end{cases}
\end{displaymath}
where $S_1$ and $S_2$ are slopes of incompressible surfaces associated to ideal points of the PSL$_2\mathbb{C}$ character variety of the complement of $K$, and $0<k\leq 2g(K)-1$ is some odd integer.
\end{conjecture}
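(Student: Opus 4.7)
The plan is to extend the framework developed for $J(2m+1,2n)$ to a general two-bridge knot $K$. First I would generalize the key observation (proved for double twist knots in Section \ref{sec_alex}) that the specialization $\phi_K(0,t)$ of the Riley polynomial coincides up to units with the Alexander polynomial $\Delta_K(t)$. Hence a root $p_0$ of $\Delta_K$ furnishes a \emph{seed} point $t=p_0$ on the slice $s=0$ of the Riley variety, from which one can try to analytically continue a one-parameter family of $\text{PSL}_2(\mathbb{R})$-representations $\rho_s$ using the implicit function theorem. When $p_0$ is a unit complex number, $\rho_s$ is boundary elliptic for small $|s|$ and the deformation lives in the translation extension locus $EL_{\widetilde{G}}(M)$; when $p_0$ is a positive real number, $\rho_s$ is boundary hyperbolic and the deformation lives in the holonomy extension locus $HL_{\widetilde{G}}(M)$ where $M=S^3\setminus K$.

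Next, following the strategy of Section 5 of the paper, I would trace this arc until it either reaches a parabolic point or escapes to infinity in an asymptotic direction. By the observation recalled in \cite[Lemma 3.6]{gao2}, the slope of any such asymptote, read in logarithmic coordinates, equals the boundary slope of an incompressible surface associated to an ideal point of the $\text{PSL}_2(\mathbb{C})$ character variety of $M$; these are exactly the slopes $S_1$ and $S_2$ in the statement, and by Ohtsuki--Riley--Sakuma they can be read off from the even continued fraction expansion of $K$. In the unit-complex case the arc instead accumulates at a parabolic point $(0,k)$ on the vertical axis with $k$ an odd integer; the symmetry of $EL_{\widetilde{G}}(M)$ under $D_\infty(M)$ combined with the degree/span of $\Delta_K$ should force $0<k\leq 2g(K)-1$. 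Applying Lemma \ref{trans_lemma} in the elliptic case and Lemma \ref{hol_lemma} in the hyperbolic case would then give left-orderability of $\pi_1(M(r))$ for every rational $r$ in the intervals described, provided we first verify irreducibility of the Dehn filling (which is known for two-bridge knots away from a small set of exceptional slopes).

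The principal obstacle is the absence of an explicit closed form for the Riley polynomial of a general two-bridge knot: one loses the hands-on formulas for $W$, $\tau_n$, and $f_m,g_m$ that drive the computation for double twist knots, so one must argue intrinsically using Culler--Shalen theory on the character variety. Concretely, showing that the arc emanating from the seed point exists globally and has the predicted endpoints requires (i) ruling out premature bifurcations or singularities in the real slice of the Riley variety, and (ii) identifying which ideal point of $X(M)$ is reached at each end of the arc and matching its associated Culler--Shalen surface to the continued-fraction surface of the prescribed boundary slope. Step (ii) is the hardest part: it asks for a dictionary between the asymptotic behavior of real Riley-polynomial solutions and the detection of incompressible surfaces, and, in the unit-complex case, for a precise parity/framing argument analogous to the one used in Case 1 of the proof of Theorem \ref{main} to pin down the integer $k$ rather than merely bound it.
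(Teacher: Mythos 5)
The statement you are trying to prove is not proved in the paper: it is stated explicitly as a \emph{conjecture} (attributed to Dunfield), and the surrounding text makes clear that even the special case of the second asymptotic slope for $J(2m+1,2n)$, observed numerically in Figure \ref{h62}, is something the author does ``not know how to prove.'' So there is no proof in the paper to compare yours against, and your proposal should be judged as a research program rather than as an alternative argument.

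As a program it follows the natural route --- seed a family of real representations at a root of the Alexander polynomial, continue it through the translation or holonomy extension locus, and read off slopes from asymptotes and parabolic points --- but every step that goes beyond what the paper already does for double twist knots is left unresolved, and these are precisely the steps where the difficulty lives. Concretely: (i) the implicit-function-theorem continuation from $s=0$ requires the root $p_0$ to be simple and, more seriously, gives only a local arc; nothing in your sketch rules out the arc terminating at an interior singularity, turning back, or leaving the real locus before reaching an ideal or parabolic point, and without the explicit formulas for $W$, $f_m$, $g_m$, $\tau_n$ there is no analogue of Proposition \ref{riley_sol} to control the global picture. (ii) The identification of the asymptotic slopes with the particular boundary slopes $S_1,S_2$ requires matching the ideal point actually reached by your real arc with a specific Culler--Shalen surface; \cite[Lemma 3.6]{gao2} only says asymptotic slopes are \emph{among} the detected boundary slopes, not which ones occur. (iii) The bound $0<k\leq 2g(K)-1$ and the oddness of $k$ are asserted from symmetry and the span of $\Delta_K$, but the paper itself says computing the integer $d$ (the analogue of $k$) ``is in general difficult'' and only establishes $|2d-1|\geq 1$ after a choice of framing. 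You acknowledge (ii) and (iii) as the hardest parts, which is honest, but it means the proposal does not close the argument; it restates the conjecture's content as goals. Finally, note that the paper's discussion of $6_3$ shows the $\widetilde{G}$-representation method can fail outright when the hypotheses on $p_0$ are not met, so any eventual proof must make essential use of the existence of a positive real or unit complex root, not merely of a seed point on the $s=0$ slice.
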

The interval is optimal in the sense that any interval $I'$ such that $\pi_1(M(r))$ has a nontrivial $\widetilde{PSL_2(\mathbb{R})}$ representation for any rational $r\in I'$, is contained in $I$.
This conjecture is also the motivation of this paper.

\subsection*{Acknowledgements}
The author was supported by US NSF grant DMS-1811156 and Mid-Career Researcher Program (2018R1A2B6004003) through the National Research Foundation funded by the government of Korea.
The author gratefully thanks Nathan Dunfield for introducing her to this problem and providing numerous helpful suggestion. The author would also like to thank Ying Hu for suggesting her read papers by Ryoto Hakamata and Masakazu Teragaito, and thank Masakazu Teragaito, Makoto Sakuma, Sanghyun Kim and the referee for their advice.
Finally, the author would like to thank Marc Culler and Nathan Dunfield for allowing her to use their computer program PE which produces all the graphs in this paper, and teaching her how it works.

\bibliography{two_bridge_reference}
\bibliographystyle{ws-jktr}
\end{document}